\newtheorem{theorem}{Theorem}[section]
\theoremstyle{definition}
\newtheorem{definition}[theorem]{Definition}
\newtheorem{example}[theorem]{Example}
\theoremstyle{remark}
\newtheorem{remark}[theorem]{Remark}
\numberwithin{equation}{section}
\DeclareMathOperator{\Tr}{Tr}
\DeclareMathOperator{\atanh}{arctanh}
\providecommand{\keywords}[1]{\textbf{\textit{Keywords:}} #1}
\providecommand{\subjclass}[1]{\textbf{\textit{MSC2020:}} #1}
\begin{document}

\nocite{*} 

\title{ New characterizations of the ring of the  split-complex numbers and  the field $ \mathbb{C} $ of complex numbers  and their  comparative analyses}

\author{Hailu Bikila Yadeta \\ email: \href{mailto:haybik@gmail.com}{haybik@gmail.com} }
  \affil{Salale University, College of Natural Sciences, \\
  Department of Mathematics, Fiche, Oromia, Ethiopia}
\date{\today}
\maketitle

\begin{abstract}
\noindent In this paper, we give  a new characterization of the split-complex numbers as a vector space $LC_2= \{xI+yE : x,y \in \mathbb{R},\, E^2=I \}$ of operators, where $I$ is the identity operator and $E$ is the unit shift operator that are operating on the space $\mathbb{P}_2$ of all real-valued $2$-periodic functions. We also characterize the field of the complex number $\mathbb{C}= \{x+yi: x, y \in \mathbb{R}, i^2 =-1 \}$ as the space of linear operators of the form $ \{xI+yE, E^2 = -I \}$,  where $I$ is the identity operator and $E$ the unit shift operator that are regarded as operating on the vector space $\mathbb{AP}_2 $ of all real-valued $2$-antiperiodic functions. In an analogy to the polar form of complex numbers, we form the hyperbolic form of some subset $ \mathcal{H} $ of the elements of $LC_2 $. We study some properties of the elements of $LC_2$, the trace, the determinant, invertibility conditions, and others. We study some elementary functions defined on subsets of $LC_2$ as compared and contrasted with the usual complex functions. We study properties like continuity, differentiability and define the holomorphic condition of $LC_2$ functions  in a different sense than complex functions. We establish the line integrals of the vector-valued functions in $LC_2$ and compare them against the well known results for complex functions of a complex variable.
\end{abstract}

\noindent\keywords{ $2$-periodic function, $2$-antiperiodic function, identity operator, shift operator, hyperbolic modulus, hyperbolic argument, spectrum, determinant, circulant matrix, Cauchy-Riemann type equations,  }\\
\subjclass{Primary 30A99, 47B38, 47B39, 47B92, 39A23 }\\
\subjclass{Secondary  16S90}

\section{Introduction and preliminaries}

The split-complex numbers (or the hyperbolic numbers, motors, tessarines, perplex numbers, double numbers, and Lornenz numbers)  are based on the hyperbolic unit $j$ satisfying $j^2=1 $,  were introduced by James Cockle in 1848. See, for example, \cite{DV}, \cite{WP} and the articles cited therein. The set $ \{x+yj, x,y \in \mathbb{R} j^2=1\} $ of all split-complex numbers is denoted by $D$. See \cite{DV}.

 In this paper, we introduce a new characterization of the set of split-complex numbers as the set of operators  whose elements are of the form $L = xI+yE$ and that are linear combinations of the identity operator $I$ and the shift operator $E$ with their properties as operators operating on the space of all real valued periodic functions of fundamental period equal to $2$. We denote this vector space by $LC_2 $. We also show that  the space $LC_2$ has some analogy to the field of complex numbers $\mathbb{C}$. and study some similarities and differences between the spaces of complex valued function $f(z)$ of complex argument $z =x+yi $ where $i$ satisfying  $i^2=-1$, is the usual imaginary unit of the field of complex numbers, and operator valued function $f(L) $ with operator argument $L=xI+yE$, where $I$ is the identity operator and $E$  the shift operator defined on the space of all two periodic functions of fundamental period equal to $2$.

The motivation for the current work on new characterizations of the field of complex numbers as well as the ring of split-complex numbers is based on the author's previous study on the decomposition of periodic spaces into subspaces of periodic and subspaces of antiperiodic functions in \cite{HBY}.
In addition, we show that $LC_2$ is isomorphic to the ring of $2 \times 2 $ real circulant matrices. We denote by $LC^*_2$ the subset of $LC_2$ constituting those elements of $LC_2$ that are invertible operators. The subspace $LC_2^*$ form a group under multiplication, and that $LC_2^*$ is isomorphic to the subset $CM_2^*$ of all invertible $ 2 \times 2 $ circulant matrices, which forms a group under matrix multiplication. For some important discussion of circulant matrices, see \cite{KW}, \cite{HBY1}, \cite{HBY1} and the references therein. In this paper, we will present our main results in the next three sections as follows:
\begin{itemize}
  \item We  introduce  a new characterization of the ring $D$ of split-matrices as a ring $LC_2 $  of operators acting on the space of all periodic functions of period 2.    It is known that the set of all split-complex numbers is isomorphic to the set of all $2 \times 2 $ circulant matrices. However, we verify that the new characterization of the set of split-matrices as $LC_2$  bears same quantities like  traces and determinants of the circuant matrices. We also establish group isomorphism between some subspaces of $LC_2$ with corresponding parts of $CM_2$.
\item We introduce a new characterization  of the field of complex numbers as a vector space of operators operating on space of all antiperiodic functions of fundamental antiperiod 2. To the best knowledge of the author, this is a new characterization of the complex field.
  \item We establish some comparative analysis between the vector space $LC_2 $, which is a ring, and the field $\mathbb{C}$ of complex numbers. We compare some properties of $LC_2 $  function with the well-established results of complex functions.
\end{itemize}

\subsection{Shift operators and spaces of periodic functions}

\begin{definition}
For $ h \in \mathbb{R} $, we define the shift operator $ E ^h $ and the identity operator $ I $ as
$$ E ^h y(x) := y(x+h),\quad  I y(x) := y(x) .$$
 For $ h = 1 $, we write $ E^h $ only as $ E $ than $ E^{1}$. We agree  that $ E^0 = I $.
We define the forward difference operator $\Delta $ and the back ward difference operator $ \nabla $ as follows
$$ \Delta y(x):=  (E-I)y(x)= y(x+1) -y(x),\quad  \nabla y(x)= (I-E^{-1})y(x)= y(x)-y(x-1). $$
\end{definition}
The shift operators are very important in the study of difference equations and differential equations. The general solutions of linear difference equations are either periodic functions or linear combinations of some functions over periodic functions. See,for example, \cite{LB}, \cite{CHR},\cite{MT},\cite{KM}.
\begin{definition}
  A function $ f$ is said to be $p$-periodic if there exists a $ p >0 $ such that $ f(x)= f(x+ p),\,  x \in \mathbb{R}$. The least such $p$ is called the \emph{period} of $ f $. In terms of the shift operator, we write this as
  $$ E^pf(x) = f(x) .$$
We denote the space of all $p$-periodic functions by $ \mathbb{P}_p$.
\end{definition}

\begin{definition}\cite{GN}, \cite{JM}
  A function $ f$ is said to be $p$-antiperiodic if there exists a $ p >0 $ such that $ f(x+ p)= -f(x) ,\,  x \in \mathbb{R}$. The least such $p$ is called the \emph{ antiperiod} of $ f $. In terms of shift operators, we write this as
  $$ E^pf(x) = -f(x). $$
  We denote the space of all $p$-antiperiodic functions by $ \mathbb{AP}_p$.
  \end{definition}

\begin{example}
  The sequence of functions $ f_n(x)= \cos 2n \pi x , \, n\in \mathbb{N} $ are 1-periodic. The sequence of functions $g_n(x)= \cos (2n+1) \pi x ,\, n \in \mathbb{N} $ are 1-antiperiodic. The function $f(x)=x-\lfloor x \rfloor$, where $\lfloor x \rfloor$ denotes the greatest integer not greater than $x$, is a 1-periodic function.
\end{example}


\section{Anew characterization of the ring of split-complex numbers }
In this section, we characterize the ring of the split complex numbers as a set of some classes of operators acting on the space $LC_2 $ of all $2$-periodic functions.
\begin{definition}
Define a vector space $LC_2$ of operators over $\mathbb{R}$, operating on the space of all $2$-periodic functions, as
\begin{equation}\label{eq:LC2}
   LC_2: =\{ xI+yE: x,y \in \mathbb{R}, E^2=I \},
\end{equation}
where $I$ is the identity operator on $\mathbb{P}_2$, and $E$ is the unit shift operator on $\mathbb{P}_2 $. In (\ref{eq:LC2}), the real number $x$ is called the \emph{''identity component''} of $L \in LC_2 $, and the real number $y$ is called the \emph{''shift component''} of $L \in LC_2 $. The condition $E^2=I,$ follows from the fact that $E $ is supposed to operate on the space  $ \mathbb{P}_2 $ so that
$$ E^2f(x)=f(x+2)=f(x)= I f(x), \forall f \in \mathbb{P}_2 .$$
\end{definition}

 \begin{theorem}
  $LC_2$ is a vector space with vector addition defined as:
  \begin{equation}\label{eq:vectoraddition}
  (x_1I + y_1E) + (x_2I + y_2 E) =(x_1+x_2)I+(y_1+y_2)E,
  \end{equation}
  and scalar multiplication defined as:
\begin{equation}\label{eq:scalarmultiplication}
  c(xI+yE)= cxI+cyE,\quad c, x, y \in \mathbb{R}.
\end{equation}
\end{theorem}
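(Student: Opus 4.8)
The plan is to verify directly that the set $LC_2$, equipped with the two operations in \eqref{eq:vectoraddition} and \eqref{eq:scalarmultiplication}, satisfies the eight vector space axioms over the field $\mathbb{R}$. First I would observe that both operations are well defined and closed: the sum of two elements $x_1I+y_1E$ and $x_2I+y_2E$ is again of the form $xI+yE$ with $x=x_1+x_2$ and $y=y_1+y_2$ real, and likewise $c(xI+yE)=cxI+cyE$ has real components, so the right-hand sides genuinely lie in $LC_2$. Here it is worth noting that each $L=xI+yE$ is honestly a well-defined linear operator on $\mathbb{P}_2$ (since $I$ and $E$ both map $\mathbb{P}_2$ to itself, using $E^2=I$), so addition and scalar multiplication are just the usual pointwise operations on operators restricted to this two-parameter family.

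Next I would check the abelian group axioms for addition: commutativity and associativity follow immediately from commutativity and associativity of addition in $\mathbb{R}$ applied coordinatewise to the identity and shift components; the zero vector is $0I+0E$ (the zero operator), which satisfies $L+0=L$; and the additive inverse of $xI+yE$ is $(-x)I+(-y)E$. Then I would verify the four module-type axioms: $1\cdot(xI+yE)=xI+yE$, the compatibility $c_1(c_2(xI+yE))=(c_1c_2)(xI+yE)$, the distributive law $c\bigl((x_1I+y_1E)+(x_2I+y_2E)\bigr)=c(x_1I+y_1E)+c(x_2I+y_2E)$, and the distributive law $(c_1+c_2)(xI+yE)=c_1(xI+yE)+c_2(xI+yE)$. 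Each of these reduces, after expanding via \eqref{eq:vectoraddition} and \eqref{eq:scalarmultiplication}, to an identity in $\mathbb{R}$ holding separately in the identity component and the shift component.

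There is really no substantive obstacle here: the only point requiring a moment's care is the implicit claim that $I$ and $E$ are linearly independent as operators on $\mathbb{P}_2$, so that an element $L\in LC_2$ determines its pair $(x,y)$ uniquely and the coordinatewise reasoning above is legitimate. This can be seen by exhibiting a function $f\in\mathbb{P}_2$ with $f$ and $Ef$ pointwise linearly independent — for instance $f(x)=\cos(\pi x)$, for which $Ef(x)=\cos(\pi(x+1))=-\cos(\pi x)$ is still a scalar multiple, so instead one should take $f(x)=\cos(\pi x)+\sin(\pi x/1)$ type combinations, or more simply note that $\mathbb{P}_2$ contains the $1$-antiperiodic function $g(x)=\cos(\pi x)$ with $Eg=-g$ and the $2$-periodic non-antiperiodic function $h(x)=\cos(\pi x/1)$... rather than belabor this, the cleanest route is to invoke the isomorphism (developed later in the paper) between $LC_2$ and the $2\times2$ real circulant matrices, under which $I\mapsto\left(\begin{smallmatrix}1&0\\0&1\end{smallmatrix}\right)$ and $E\mapsto\left(\begin{smallmatrix}0&1\\1&0\end{smallmatrix}\right)$ are manifestly independent; but since the present theorem only asserts the vector space structure with the stated operations, the axiom checks go through verbatim regardless. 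I would therefore present the proof as a compact enumeration of the eight axioms, each line reduced to the corresponding field axiom of $\mathbb{R}$ in each component, and remark that closure is the only step that uses the defining relation $E^2=I$.
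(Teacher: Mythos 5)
Your proposal is correct and takes essentially the same route as the paper, which likewise proves the theorem by a direct enumeration of the vector space axioms, each reducing componentwise to the corresponding field axiom of $\mathbb{R}$. Your side digression on the linear independence of $I$ and $E$ (and the appeal to the circulant-matrix isomorphism) is not needed for the statement as posed, as you yourself note, so it does not affect the verification.
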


\begin{proof}
   Let $ a, b \in \mathbb{R },\, L,M,N \in LC_2 $. We have the following properties:
  \begin{itemize}
    \item Closure property with respect to addition holds: $L+M \in LC_2 $,
    \item Closure property with respect to scalar multiplication holds:  $a L \in LC_2$,
    \item Commutative property of addition holds: $L+M= M + N $.
    \item Associative property with respect to vector addition holds: $(L+M)+N= L+(M+N) $,
    \item There is the zero operator $0I+0E:=\textbf{0} \in LC_2 $ such that $ \textbf{0}+L =L+\textbf{0}=L $,
    \item For each $ L\in LC_2 $, there exists an additive inverse $-L \in LC_2 $ such that $L + (-L) = \textbf{0 }$,
    \item  Associativity with respect to scalar multiplication holds, $a(bL)=(ab)L $,
    \item  Distributivity of a scalar over vector sum holds: $ a(L+M)= aL + aM $,
    \item  Distributivity of a vector over a scalar sum holds: $ (a+b) L = aL + bL $,
    \item  For the real number $1$, $1 L= L $.
  \end{itemize}
\end{proof}

\begin{definition}[\cite{NJ}]
  An \emph{algebra} is a linear space $X$ with a function called multiplication from $ X \times X $ to $X$, which has the following properties for all $x,y,z \in X, \alpha \in  \mathbb{R }$.
  \begin{itemize}
  \item Associativity $L(MN)=(LM)N$,
  \item Left distributivity, $L(M+N)=LM+LN$; and right distributivity $(M+N)L=ML+ NL$,
  \item  Homogeneity with respect to scalar multiplication: $(aL)(bM)= (ab L)M= L(abM)=ab(LM)$.\\
  If the multiplication is commutative, then the algebra is called \emph{commutative algebra}. An algebra is called an algebra with unity(identity) if there exists a nonzero element $I \in X $ such that
  $$ LI= IL=L. $$
\end{itemize}
  \end{definition}

\begin{definition}
For $L_1=x_1I+y_1E,\, L_2=x_2I+y_2E \in LC_2 $, an operation of multiplication can be defined naturally on $LC_2 $  as follows:
\begin{equation}\label{eq:operatormultiplication}
  L_1 L_2= (x_1I+y_1E)( x_2I+y_2 E) =(x_1x_2+ y_1 y_2)I+ (x_1y_2+ x_2y_1)E.
\end{equation}
\end{definition}

\begin{remark}
  The identity component, $ x_1x_2+y_1y_2 $, of the product $L_1L_2$ is exactly the same as the dot product of the vectors $(x_1,x_2)$ and $(x_2,y_2)$ in $ \mathbb{R}^2 $.
\end{remark}

\begin{theorem}
   $LC_2$ is a commutative algebra with unity.
\end{theorem}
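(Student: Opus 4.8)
The plan is to verify each of the algebra axioms from the preceding definition directly, using the explicit multiplication formula \eqref{eq:operatormultiplication} together with the vector space structure already established. Since $LC_2$ is already known to be a real vector space, and the multiplication in \eqref{eq:operatormultiplication} visibly produces another element of the form $xI + yE$, closure under multiplication is immediate; the work is to check associativity, the two distributive laws, homogeneity with respect to scalars, commutativity, and the existence of a unity.

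First I would record that multiplication is commutative: swapping the roles of $L_1$ and $L_2$ in \eqref{eq:operatormultiplication} leaves the identity component $x_1x_2 + y_1y_2$ unchanged (symmetry of addition and multiplication in $\mathbb{R}$) and leaves the shift component $x_1y_2 + x_2y_1$ unchanged as well. Next I would exhibit the unity: the element $I = 1\cdot I + 0 \cdot E$ satisfies $L I = I L = L$ for every $L = xI + yE$, again by a one-line substitution into \eqref{eq:operatormultiplication} (the identity component becomes $x\cdot 1 + y \cdot 0 = x$ and the shift component becomes $x\cdot 0 + y\cdot 1 = y$). Then distributivity: given $L_1, L_2, L_3 \in LC_2$, I would expand $L_1(L_2 + L_3)$ using first the addition rule \eqref{eq:vectoraddition} and then \eqref{eq:operatormultiplication}, and compare componentwise with $L_1L_2 + L_1L_3$; both sides have identity component $x_1(x_2+x_3) + y_1(y_2+y_3)$ and shift component $x_1(y_2+y_3) + (x_2+x_3)y_1$, so they agree. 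Right distributivity then follows from left distributivity and commutativity. Homogeneity with respect to scalar multiplication is the analogous componentwise check using \eqref{eq:scalarmultiplication}.

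The only step requiring genuine computation is associativity, $L_1(L_2L_3) = (L_1L_2)L_3$. Here I would compute $L_2L_3$ by \eqref{eq:operatormultiplication}, obtaining identity component $x_2x_3 + y_2y_3$ and shift component $x_2y_3 + x_3y_2$, then multiply on the left by $L_1$ and collect terms; the identity component of $L_1(L_2L_3)$ comes out to $x_1x_2x_3 + x_1y_2y_3 + y_1x_2y_3 + y_1x_3y_2$ and the shift component to $x_1x_2y_3 + x_1x_3y_2 + y_1x_2x_3 + y_1y_2y_3$. Carrying out the symmetric computation for $(L_1L_2)L_3$ yields exactly the same two expressions, so the two triple products coincide. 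Alternatively — and this is the cleanest route — I would invoke the observation (to be made explicit either here or just after) that \eqref{eq:operatormultiplication} is precisely composition of operators on $\mathbb{P}_2$, since $(x_1I + y_1E)(x_2I + y_2E)$ applied to $f$ gives $x_1x_2 f + x_1y_2 E f + y_1x_2 Ef + y_1y_2 E^2 f$ and $E^2 = I$ on $\mathbb{P}_2$ collapses the last term; associativity of operator composition is then automatic, and no bookkeeping is needed. I expect the main (and only mild) obstacle to be deciding between the brute-force componentwise verification and this structural argument; I would present the structural one and relegate the componentwise identities to a remark, since it simultaneously explains why \eqref{eq:operatormultiplication} is the natural definition. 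This completes the verification that $LC_2$ satisfies all the axioms of a commutative algebra with unity.
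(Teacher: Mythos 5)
Your proposal is correct and follows essentially the same route as the paper, which simply notes that with $I$ as unity and $E^2=I$, commutativity, associativity, and distributivity are straightforward to verify; you carry out the componentwise checks (and the operator-composition shortcut) that the paper leaves implicit. No gaps — yours is just a fully written-out version of the paper's one-line argument.
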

\begin{proof}
  With $I$ as the identity, $E^2=I$ and $E^1=E $ commutativity, associativity, and distributive laws are simple to verify.
\end{proof}

\begin{theorem}
  $(LC_2,+, .) $ is a ring with the multiplication $.$ defined as in (\ref{eq:operatormultiplication}) and the addition $+$ defined as in (\ref{eq:vectoraddition}).
\end{theorem}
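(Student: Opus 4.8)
The plan is to verify the three defining bundles of the ring axioms for $(LC_2,+,\cdot)$ by leaning on the two structural results already proved. First I would invoke the theorem that $LC_2$ is a vector space over $\mathbb{R}$ under the addition (\ref{eq:vectoraddition}): its additive clauses---commutativity, associativity, existence of the zero operator $\textbf{0}=0I+0E$, and existence of additive inverses $-L=(-x)I+(-y)E$ for $L=xI+yE$---say exactly that $(LC_2,+)$ is an abelian group. So the first block of ring axioms comes for free.

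Next I would address multiplication. The operation (\ref{eq:operatormultiplication}) is well defined as a map $LC_2\times LC_2\to LC_2$: composing the operators $L_1=x_1I+y_1E$ and $L_2=x_2I+y_2E$ on $\mathbb{P}_2$ and collapsing $E^2$ to $I$---legitimate precisely because $E$ acts on $2$-periodic functions---produces $(x_1x_2+y_1y_2)I+(x_1y_2+x_2y_1)E\in LC_2$, so closure under $\cdot$ holds. Associativity of $\cdot$ and the left and right distributive laws are then exactly the content of the theorem that $LC_2$ is a commutative algebra with unity, so I would cite it; alternatively, to keep the argument self-contained, one expands both $L_1(L_2L_3)$ and $(L_1L_2)L_3$ from (\ref{eq:operatormultiplication}) and checks that the identity and shift components agree (they do, symmetrically in the pairs $(x_k,y_k)$), and likewise expands $L_1(L_2+L_3)$ against $L_1L_2+L_1L_3$. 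I would also remark that commutativity of $\cdot$ together with the unit $I$ upgrades the conclusion to: $(LC_2,+,\cdot)$ is a commutative unital ring, though only the ring axioms are claimed in the statement.

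I expect no genuine obstacle here; the proof is bookkeeping. The only point worth flagging is the justification of the reduction $E^2=I$ that underlies the multiplication formula, which is inherited from the very definition of $LC_2$ as operators on $\mathbb{P}_2$. To avoid any appearance of circularity with the earlier algebra theorem, the cleanest route is to perform the two short component computations (associativity and distributivity) directly from (\ref{eq:operatormultiplication}), and I would present it that way.
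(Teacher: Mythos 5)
Your proposal is correct, and it follows exactly the route the paper intends: the paper states this theorem without any explicit proof, treating it as an immediate consequence of the preceding results that $(LC_2,+)$ is a vector space (hence an abelian group) and that $LC_2$ is a commutative algebra with unity under the multiplication (\ref{eq:operatormultiplication}). Your write-up simply supplies the bookkeeping---closure, associativity, and distributivity checked componentwise using $E^2=I$---that the paper leaves implicit, so there is nothing to correct.
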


\begin{definition}
  An element $L=xI+yE \in LC_2 $ is said to be invertible if there exists an element $ M= aI+bE \in LC_2 $ such that
  \begin{equation}\label{eq:invertiblitycondition}
    (xI+yE)(aI+bE )=(ax+ by)I+(ay+ bx)E= I .
  \end{equation}
In this case, we say that $  M= aI+bE \in LC_2 $ is the inverse of $ L=xI+yE \in LC_2 $.
  \end{definition}

  \begin{theorem}\label{eq:invertiblitytheorem}
    The necessary and sufficient condition that an element $ xI+yE \in LC_2 $ is invertible is that $ x^2-y^2 \neq 0  $.
  \end{theorem}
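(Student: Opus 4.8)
The plan is to reduce the defining equation \eqref{eq:invertiblitycondition} to a $2\times 2$ real linear system and to recognize $x^2-y^2$ as its determinant. Writing a candidate inverse as $M=aI+bE$ and expanding via \eqref{eq:operatormultiplication}, the equation $(xI+yE)(aI+bE)=I$ becomes the scalar pair $xa+yb=1$, $ya+xb=0$, i.e.
\[
\begin{pmatrix} x & y \\ y & x \end{pmatrix}\begin{pmatrix} a \\ b \end{pmatrix}=\begin{pmatrix} 1 \\ 0 \end{pmatrix},
\]
whose coefficient matrix has determinant exactly $x^2-y^2$. So everything hinges on whether this number vanishes.

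For sufficiency I would assume $x^2-y^2\neq 0$. Then the system has the (unique) solution $a=x/(x^2-y^2)$, $b=-y/(x^2-y^2)$, so I simply set
\[
M=\frac{x}{x^2-y^2}\,I-\frac{y}{x^2-y^2}\,E \in LC_2,
\]
and verify directly from \eqref{eq:operatormultiplication} that $LM=I$. Since $LC_2$ is a commutative algebra with unity (already established), $ML=LM=I$, so $M$ is a genuine two-sided inverse and $L$ is invertible.

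For necessity I would argue by contraposition: suppose $x^2-y^2=0$, i.e. $y=x$ or $y=-x$. If $x=0$ then also $y=0$ and $L=\mathbf{0}$, which is not invertible. If $x\neq 0$, then $L=x(I+E)$ or $L=x(I-E)$ with the scalar $x$ a unit, so $L$ is invertible iff the relevant factor $I\pm E$ is. But $E^2=I$ yields the identity $(I+E)(I-E)=I-E^2=\mathbf{0}$ in $LC_2$, and neither $I+E$ nor $I-E$ is the zero operator, since $\{I,E\}$ is linearly independent in $LC_2$ (equivalently, apply them to a nonzero constant function). Hence $I+E$ and $I-E$ are zero divisors, and a zero divisor can never be a unit: if $UV=\mathbf{0}$ with $V\neq\mathbf{0}$ and $U$ had an inverse $U^{-1}$, then $V=U^{-1}(UV)=\mathbf{0}$, a contradiction. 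Therefore neither $I\pm E$ nor $L$ is invertible, which completes the proof.

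The computation is essentially bookkeeping; the only point needing care is that $LC_2$ is \emph{not} an integral domain, so on the singular side one cannot say ``$L\neq\mathbf{0}$, hence invertible.'' One must instead exhibit the explicit zero-divisor factorization $(I+E)(I-E)=\mathbf{0}$ coming from $E^2=I$ — which is precisely the structural feature distinguishing the split-complex case from the complex one.
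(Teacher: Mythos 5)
Your proof is correct, and its sufficiency half is exactly the paper's argument: reduce $(xI+yE)(aI+bE)=I$ to the linear system $xa+yb=1$, $ya+xb=0$ and read off the explicit inverse $\frac{x}{x^2-y^2}I-\frac{y}{x^2-y^2}E$. Where you genuinely diverge is on necessity. The paper stops after solving the system by Cramer's rule and says ``hence the theorem follows,'' leaving implicit why no inverse can exist when $x^2-y^2=0$ (one would have to note that the system is then inconsistent for $L\neq\mathbf{0}$). You instead argue by contraposition through the ring structure: when $x^2-y^2=0$ and $L\neq\mathbf{0}$, $L$ is a nonzero multiple of $I+E$ or $I-E$, and the factorization $(I+E)(I-E)=I-E^2=\mathbf{0}$ exhibits these as zero divisors, which can never be units. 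This buys a cleaner, coordinate-free necessity argument that also makes visible the structural reason the split-complex case differs from $\mathbb{C}$ (the presence of zero divisors, which the paper only records later in its zero-divisor subsection), at the cost of invoking a bit of ring theory beyond the paper's purely computational route. One small slip: your parenthetical ``apply them to a nonzero constant function'' does not show $I-E\neq\mathbf{0}$, since $(I-E)$ annihilates constants; the linear independence of $\{I,E\}$ (witnessed, e.g., by applying $aI+bE$ to $\cos(\pi x)$ and to a nonzero $1$-periodic function) or simply the component identification used throughout the paper is what you should rely on there.
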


  \begin{proof}
From (\ref{eq:invertiblitycondition}), we form a system of linear equations
\begin{equation}\label{eq:systemforinverse}
     ax+ by=1,\quad  ay+ bx=0.
  \end{equation}
  Solving (\ref{eq:systemforinverse}), we get $a= \frac{x}{x^2-y^2},\quad  b= \frac{-y}{x^2-y^2}$.
  Therefore,
  \begin{equation}\label{eq:Linverse}
     \frac{1}{L}= \frac{1}{xI+yE}= \frac{x}{x^2-y^2}I -\frac{y}{x^2-y^2}E,\quad x^2-y^2 \neq 0.
  \end{equation}
  Hence the theorem follows.
  \end{proof}

\begin{theorem}
   we have the following results:
\begin{itemize}
  \item $(EL)M=  E(LM)=E(ML)=(EM)L=L(EM)$
  \item (EL)(EM)=LM
\end{itemize}
\end{theorem}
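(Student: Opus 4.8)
The plan is to derive both statements purely from structural facts already in hand: $(LC_2,+,\cdot)$ is a commutative algebra with unity $I$ (so associativity, commutativity and distributivity of the multiplication (\ref{eq:operatormultiplication}) are available), together with the defining relation $E^2=I$. First I would record the small but necessary observation that $E=0I+1E$ is itself an element of $LC_2$; hence by closure of $LC_2$ under (\ref{eq:operatormultiplication}) the products $EL$ and $EM$ again lie in $LC_2$, so every expression occurring in the theorem is a legitimate element of the algebra and all manipulations below stay inside it.

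For the first chain of equalities I would simply read it from left to right. The identity $(EL)M=E(LM)$ is the associativity axiom applied to the triple $E,L,M$. Next, $E(LM)=E(ML)$ holds because multiplication in $LC_2$ is commutative, so $LM=ML$. Then $E(ML)=(EM)L$ is again associativity, now for the triple $E,M,L$. Finally $(EM)L=L(EM)$ is one further application of commutativity. This exhausts the first bullet.

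For the second bullet I would regroup $(EL)(EM)$ using commutativity and associativity freely: $(EL)(EM)=(E\cdot E)(LM)=E^2(LM)$, and since $E^2=I$ is the unity of $LC_2$ this equals $LM$. As an optional cross-check one can instead compute in components, writing $L=x_1I+y_1E$, $M=x_2I+y_2E$: by (\ref{eq:operatormultiplication}) one has $EL=y_1I+x_1E$ and $EM=y_2I+x_2E$, and multiplying these out and comparing with $LM=(x_1x_2+y_1y_2)I+(x_1y_2+x_2y_1)E$ reproduces both identities.

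I do not expect any real obstacle here: the argument is a direct appeal to the algebra axioms proved earlier plus $E^2=I$. The only point requiring a moment of care is the verification that $EL$ and $EM$ are still of the form to which (\ref{eq:operatormultiplication}) applies — and that is precisely the closure remark made at the outset.
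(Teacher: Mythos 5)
Your argument is correct and matches the paper's own proof, which likewise derives both identities from commutativity and associativity of multiplication in $LC_2$ together with $E^2=I$; you have merely spelled out the individual steps (and added a useful closure remark and component check) that the paper leaves implicit.
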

\begin{proof}
  The result follows from commutativity and associativity in of multiplication in $LC_2$ and the fact that $E^2= I$.
\end{proof}

\subsection{Conjugate of the elements $L$  in $LC_2$ and their properties }
\begin{definition}
  The operator $ \overline{L}= xI-yE \in LC_2 $ is called the \emph{conjugate  operator} of the operator $ L =xI+ yE \in LC_2  $ and vice versa.
\end{definition}
\begin{theorem}
  Let $ a \in\mathbb{ R } $,  $ L= xI+yE $, and $M=rI+sE \in LC_2 $. We have the following properties of the conjugate:
  \begin{multicols}{2}
\begin{itemize}
  \item $\overline{aL}    =  a \bar{ L} .$
  \item $ \bar{ \bar{L}}= L .$
  \item $ \overline{L+M} = \bar{L}+ \bar{M}.$
  \item $ \overline{LM}= \bar{L}\bar{M} .$
  \item $ \overline{\left(\frac{L}{M} \right)}=  \frac{\bar{L} }{\bar{M}},\quad r^2-s^2 \neq 0.  $
\end{itemize}
\end{multicols}
\end{theorem}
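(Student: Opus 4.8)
The plan is to verify each of the five identities by direct computation, working entirely from the definition $\overline{xI+yE}=xI-yE$, the multiplication rule \eqref{eq:operatormultiplication}, and the inversion formula \eqref{eq:Linverse}. The first three items are immediate: writing $L=xI+yE$ and $M=rI+sE$, one has $aL=(ax)I+(ay)E$, so $\overline{aL}=(ax)I-(ay)E=a(xI-yE)=a\bar L$; conjugating $\bar L=xI-yE$ a second time returns $xI+yE=L$; and since $L+M=(x+r)I+(y+s)E$, we get $\overline{L+M}=(x+r)I-(y+s)E=(xI-yE)+(rI-sE)=\bar L+\bar M$. None of these requires anything beyond the $\mathbb{R}$-linear structure of $LC_2$ and the observation that conjugation simply negates the shift component.

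For the multiplicative identity $\overline{LM}=\bar L\,\bar M$, I would first apply \eqref{eq:operatormultiplication} to get $LM=(xr+ys)I+(xs+ry)E$, hence $\overline{LM}=(xr+ys)I-(xs+ry)E$. Then I would compute the right-hand side using \eqref{eq:operatormultiplication} again, this time with the pairs $(x,-y)$ and $(r,-s)$: $\bar L\,\bar M=(xI-yE)(rI-sE)=\bigl(xr+(-y)(-s)\bigr)I+\bigl(x(-s)+r(-y)\bigr)E=(xr+ys)I-(xs+ry)E$, which matches. (One may also remark that commutativity of $LC_2$ makes the order irrelevant, so $\overline{LM}=\overline{ML}$ as well.)

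The quotient identity is the only one with any subtlety, and the key preliminary step is to show that conjugation commutes with inversion, i.e.\ $\overline{M^{-1}}=(\bar M)^{-1}$ whenever $r^2-s^2\neq 0$. This follows from \eqref{eq:Linverse}: on one hand $\overline{M^{-1}}=\overline{\tfrac{r}{r^2-s^2}I-\tfrac{s}{r^2-s^2}E}=\tfrac{r}{r^2-s^2}I+\tfrac{s}{r^2-s^2}E$; on the other hand, applying \eqref{eq:Linverse} to $\bar M=rI+(-s)E$ (noting $r^2-(-s)^2=r^2-s^2\neq 0$, so Theorem~\ref{eq:invertiblitytheorem} guarantees $\bar M$ is invertible) gives $(\bar M)^{-1}=\tfrac{r}{r^2-s^2}I-\tfrac{-s}{r^2-s^2}E=\tfrac{r}{r^2-s^2}I+\tfrac{s}{r^2-s^2}E$, the same element. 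With this in hand, $\overline{L/M}=\overline{L\,M^{-1}}=\bar L\,\overline{M^{-1}}=\bar L\,(\bar M)^{-1}=\bar L/\bar M$, using the already-proven product rule. So the logical order must be: items one through four first, then the quotient rule last. I do not anticipate a genuine obstacle here — the whole theorem is bookkeeping — but the one point that deserves explicit care is the hypothesis $r^2-s^2\neq 0$, which is exactly what is needed both for $M^{-1}$ to exist and, since $\bar M$ has the same quantity $r^2-s^2$, for $(\bar M)^{-1}$ to exist, so the identity is stated on the correct domain.
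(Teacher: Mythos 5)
Your proof is correct, and it is the routine componentwise verification that the paper clearly intends: the paper states this theorem without giving any proof at all, so your write-up simply supplies the omitted bookkeeping. The one step worth having made explicit — that $\overline{M^{-1}}=(\bar M)^{-1}$ because $\bar M$ has the same quantity $r^2-s^2$, so Theorem~\ref{eq:invertiblitytheorem} applies — is handled correctly and in the right logical order (product rule before quotient rule).
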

Let $L=xI+yE \in LC_2 $. The identity component of $L$, and the shift component of $L$ may be calculated as follows:
\begin{equation}\label{eq:LandLbar}
   x = \frac{L+\bar{L}}{2I},\quad y=\frac{L- \bar{L}}{2E}.
\end{equation}

\subsection{Existence or nonexistence of fixed points of an element $L$}
\begin{definition}
Let  $L \in LC_2 $ is given. Consider the  map:
\begin{equation}\label{eq:factormap}
    L: LC_2 \rightarrow  LC_2, \quad M \mapsto LM .
\end{equation}

 An element $ M \in LC_2 $ is a fixed point of the map (\ref{eq:factormap}) if $LM=M $. Clearly, $\textbf{0}$ is a fixed point of the map (\ref{eq:factormap}) and is called the \emph{trivial fixed point}.
\end{definition}

\begin{theorem}
  Let $L \in LC_2 $. The set of all fixed points of $L$ form a group  with respect to addition.
  \end{theorem}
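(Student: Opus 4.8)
The plan is to identify the set of fixed points of $L$ explicitly as a linear subspace of $LC_2$, and then to observe that a linear subspace is automatically a subgroup under addition. First I would write $L = xI + yE$ and a candidate fixed point as $M = aI + bE$, and expand the defining equation $LM = M$ using the multiplication rule (\ref{eq:operatormultiplication}). This produces the homogeneous linear system
\begin{equation}
(x-1)a + yb = 0, \qquad ya + (x-1)b = 0
\end{equation}
in the real unknowns $a, b$. Its solution set $S_L \subseteq \mathbb{R}^2$ is by construction a linear subspace, and under the vector-space identification $LC_2 \cong \mathbb{R}^2$ it is exactly the set of fixed points of the map (\ref{eq:factormap}).

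Next I would check the subgroup axioms directly, invoking that $(LC_2,+,\cdot)$ is a commutative ring (established earlier). Since $L\textbf{0} = \textbf{0}$, the trivial fixed point lies in $S_L$, so $S_L \neq \emptyset$. If $M_1, M_2 \in S_L$, then by left distributivity $L(M_1 + M_2) = LM_1 + LM_2 = M_1 + M_2$, so $M_1 + M_2 \in S_L$; and if $M \in S_L$, then $L(-M) = -(LM) = -M$, so $-M \in S_L$. Associativity and commutativity of $+$ on $S_L$ are inherited from $LC_2$. Hence $S_L$ is a subgroup of $(LC_2, +)$, in particular a group.

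A coordinate-free reformulation of the same argument, which I would include as a remark, is that $M$ is a fixed point of $L$ precisely when $(L - I)M = \textbf{0}$, so $S_L = \ker \mu_{L-I}$, where $\mu_{L-I} : LC_2 \to LC_2$ denotes the $\mathbb{R}$-linear operator of multiplication by $L - I$. The kernel of any $\mathbb{R}$-linear endomorphism of $LC_2$ is a linear subspace, hence an additive subgroup; this yields the theorem immediately and moreover shows that $S_L$ is an ideal of $LC_2$.

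I do not anticipate a genuine obstacle. The one point deserving a sentence of care is that $S_L$ may collapse to the trivial group $\{\textbf{0}\}$: by Theorem \ref{eq:invertiblitytheorem} this happens exactly when $L - I = (x-1)I + yE$ is invertible, i.e. when $(x-1)^2 - y^2 \neq 0$. Since the trivial group is still a group, the statement holds for every $L$; and when $(x-1)^2 = y^2$ the computation above shows that $S_L$ is one-dimensional (or all of $LC_2$ when $L = I$), which makes the structure of the fixed-point set fully transparent.
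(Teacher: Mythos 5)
Your proposal is correct and its core argument is the same as the paper's: the fixed-point set contains $\textbf{0}$ and is closed under addition and negation (the paper uses closure under all scalar multiples, $L(\alpha M)=\alpha M$) via distributivity and commutativity of the ring $LC_2$. The additional observations you include --- the explicit homogeneous system, the identification $S_L=\ker \mu_{L-I}$ showing $S_L$ is in fact an ideal, and the dichotomy governed by the invertibility of $L-I$ --- are correct but go beyond what the paper's proof records.
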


  \begin{proof}
    Let $\mathbb{S}$ be the set of all fixed points of $L$. $ \mathbb{S}$ is nonempty since $\textbf{0} \in \mathbb{S} $. Assume that $ M, N \in S $. Then
     \begin{itemize}
     \item  $L(M+N)=LM+LN=M+N $. So, $M+N \in \mathbb{S} $,
     \item $L(\alpha M)= \alpha LM= \alpha M, \forall \alpha \in \mathbb{R}  $. So, $ \alpha M \in \mathbb{S} $.
     \end{itemize}
   \end{proof}

\begin{theorem}
The necessary and sufficient condition that an element $L=xI+yE$ of $LC_2 $ has a nontrivial fixed point is that $x \pm y=1$.
\end{theorem}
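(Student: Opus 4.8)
The plan is to recast the fixed-point equation $LM=M$ as $(L-I)M=\textbf{0}$ and then decide when the operator $L-I=(x-1)I+yE\in LC_2$ kills some nonzero element of $LC_2$. First I would expand $((x-1)I+yE)(aI+bE)$ by the multiplication rule (\ref{eq:operatormultiplication}), which turns $(L-I)M=\textbf{0}$ into the homogeneous linear system $(x-1)a+yb=0$, $ya+(x-1)b=0$ in the unknowns $a,b\in\mathbb{R}$. Applying Theorem \ref{eq:invertiblitytheorem} to the element $L-I$ (whose identity component is $x-1$ and whose shift component is $y$), this system has only the trivial solution $a=b=0$ exactly when $(x-1)^2-y^2\neq 0$, and it has a nonzero solution exactly when $(x-1)^2-y^2=0$. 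Since $(x-1)^2-y^2=(x-1-y)(x-1+y)$, the latter condition is equivalent to $x-y=1$ or $x+y=1$, that is, to $x\pm y=1$, which is the asserted criterion.

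For the sufficiency direction I would make the fixed point explicit in each case. If $x+y=1$, then $M=I+E$ satisfies $LM=(xI+yE)(I+E)=(x+y)I+(x+y)E=I+E=M$, so $M$ is a nontrivial fixed point; if $x-y=1$, then $M=I-E$ satisfies $LM=(x-y)I-(x-y)E=I-E=M$, again nontrivial. (In the overlap case $x=1,\ y=0$ one has $L=I$, and every element of $LC_2$ is a fixed point.) For the necessity direction, if $x\pm y\neq 1$ then $(x-1)^2-y^2\neq 0$, so by Theorem \ref{eq:invertiblitytheorem} the operator $L-I$ is invertible; left-multiplying $(L-I)M=\textbf{0}$ by $(L-I)^{-1}$ forces $M=\textbf{0}$, so the trivial fixed point is the only one.

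The argument is purely linear-algebraic, and I do not anticipate a genuine obstacle: the invertibility criterion already established for $LC_2$ does all the work once the fixed-point equation is rewritten as a kernel condition. The only point requiring a little care is the bookkeeping of the degenerate situation in which the two scalar equations of the system coincide (corresponding to $y=0$ and $x=1$), which is precisely why I would state the explicit fixed points $I\pm E$ rather than leave the sufficiency direction at the level of the determinant vanishing.
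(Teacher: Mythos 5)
Your proposal is correct and follows essentially the same route as the paper: the fixed-point equation $LM=M$ is reduced to the homogeneous system with coefficient matrix $\begin{pmatrix} x-1 & y \\ y & x-1 \end{pmatrix}$, and the criterion $x\pm y=1$ comes from the vanishing of its determinant $(x-1)^2-y^2$. Your additions --- invoking Theorem \ref{eq:invertiblitytheorem} for $L-I$ to settle necessity and exhibiting the explicit fixed points $I\pm E$ for sufficiency --- make both directions explicit, which the paper leaves largely implicit, but the underlying argument is the same.
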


\begin{proof}
  Let $L=xI+yE $. If $aI+bE$ is a nontrivial  fixed point of $L$, then
  $$ (xI+yE)(aI+bE)= aI+bE $$ from which, by equating the $I$  and the $E$ components, we have the a homogeneous system
  \begin{equation}\label{eq:homogeneousystem}
      \begin{bmatrix}
        x-1 & y \\

        y & x-1
      \end{bmatrix} \begin{bmatrix}
                      a \\
                      b
                    \end{bmatrix} =\begin{bmatrix}
                      0 \\
                      0
                    \end{bmatrix}
      \end{equation}
 The homogenous system (\ref{eq:homogeneousystem})  has a non trivial solution if $x \pm y=1 $.
\end{proof}

Let
\begin{equation}\label{eq:setS1}
  \mathbb{ FS}_1= \{ xI+yE \in LC_2, x-y=1 \},
\end{equation}

\begin{equation}\label{eq:setS2}
   \mathbb{FS}_2= \{ xI+yE \in LC_2, x+y=1 \},
\end{equation}

\begin{equation}\label{eq:spaceS1}
  \mathbb{ S}_1= \{ xI+yE \in LC_2, x+y=0 \},
\end{equation}

\begin{equation}\label{eq:spaceS2}
   \mathbb{S}_2= \{ xI+yE \in LC_2, x-y=0 \},
   \end{equation}

\begin{theorem}
  $\mathbb{S}_1$ and $\mathbb{S}_2$ are subgroups of $LC_2$ under the operation of  addition.
\end{theorem}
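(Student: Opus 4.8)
The plan is to verify directly that each of $\mathbb{S}_1$ and $\mathbb{S}_2$ is closed under addition and under taking additive inverses, and that each contains the zero element $\textbf{0}$, so that the subgroup criterion applies. Since $LC_2$ is already an abelian group under addition by the first theorem, it suffices to check these three conditions (or, more compactly, the single condition that $L,M\in\mathbb{S}_i$ implies $L-M\in\mathbb{S}_i$).

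First I would treat $\mathbb{S}_1=\{xI+yE:x+y=0\}$. Given $L=x_1I+y_1E$ and $M=x_2I+y_2E$ with $x_1+y_1=0$ and $x_2+y_2=0$, the sum is $(x_1+x_2)I+(y_1+y_2)E$ by (\ref{eq:vectoraddition}), and $(x_1+x_2)+(y_1+y_2)=(x_1+y_1)+(x_2+y_2)=0$, so $L+M\in\mathbb{S}_1$. The additive inverse $-L=(-x_1)I+(-y_1)E$ satisfies $(-x_1)+(-y_1)=-(x_1+y_1)=0$, so $-L\in\mathbb{S}_1$. Finally $\textbf{0}=0I+0E$ has $0+0=0$, so $\textbf{0}\in\mathbb{S}_1$ and $\mathbb{S}_1$ is nonempty. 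Hence $\mathbb{S}_1$ is a subgroup of $(LC_2,+)$. The argument for $\mathbb{S}_2=\{xI+yE:x-y=0\}$ is identical, replacing the linear functional $(x,y)\mapsto x+y$ by $(x,y)\mapsto x-y$; both are linear in the components, which is exactly what makes their kernels closed under addition and negation.

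Conceptually, the one-line reason is that $\mathbb{S}_1$ and $\mathbb{S}_2$ are the kernels of the $\mathbb{R}$-linear maps $LC_2\to\mathbb{R}$ sending $xI+yE$ to $x+y$ and to $x-y$ respectively, and the kernel of a linear map between vector spaces is a subspace, hence in particular an additive subgroup. I do not anticipate any real obstacle here: the statement is a routine consequence of linearity, and the only thing to be careful about is to invoke the already-established abelian group structure on $LC_2$ so that one need only check closure, inverses, and nonemptiness rather than re-verify associativity and commutativity. It is perhaps worth remarking that $\mathbb{S}_1$ and $\mathbb{S}_2$ are in fact one-dimensional subspaces (spanned by $I-E$ and $I+E$ respectively), and that they are precisely the sets of fixed points of the maps associated with suitable idempotents, tying this theorem back to the preceding discussion of fixed points; but for the statement as given, the subgroup verification above suffices.
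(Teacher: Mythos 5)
Your proof is correct; the paper actually states this theorem without supplying any proof, and your verification (closure under addition, additive inverses, and the zero element, or equivalently viewing $\mathbb{S}_1$ and $\mathbb{S}_2$ as kernels of the linear functionals $xI+yE\mapsto x+y$ and $xI+yE\mapsto x-y$) is exactly the routine argument the author evidently intended to leave to the reader. Your added observations that $\mathbb{S}_1$ and $\mathbb{S}_2$ are spanned by $I-E$ and $I+E$ respectively are also correct and consistent with the paper's later decomposition $LC_2=\mathbb{S}_1\oplus\mathbb{S}_2$.
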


\begin{theorem}
 We have the following results:
 \begin{itemize}
   \item For each $L\in \mathbb{FS}_1$, for any $ M \in \mathbb{S}_1 $  we have $LM=M$.
   \item For each $L\in \mathbb{FS}_2$, for any $ M \in \mathbb{S}_2 $  we have $LM=M$.
   \item $LC_2 = \mathbb{S}_1 \oplus \mathbb{S}_2 $, and $  \mathbb{FS}_1 \cap \mathbb{FS}_2= \{ I \}$ fixes every element of $LC_2$.
 \end{itemize}
\end{theorem}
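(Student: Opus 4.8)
The plan is to verify each of the three bullet points by direct computation with the multiplication rule (\ref{eq:operatormultiplication}), exploiting the fact that membership in $\mathbb{FS}_i$ or $\mathbb{S}_i$ is encoded by a single linear relation between the identity and shift components of an element of $LC_2$.

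For the first bullet I would take $L = xI + yE$ with $x - y = 1$ and $M = aI + bE$ with $a + b = 0$, substitute $x = y + 1$ and $a = -b$, and expand $LM = (xa + yb)I + (xb + ya)E$. The identity component collapses to $b(y - x) = -b = a$, and the shift component to $b(x - y) = b$, so $LM = aI + bE = M$. The second bullet is the mirror image: with $x + y = 1$ and $a = b$ one gets identity component $a(x + y) = a$ and shift component $a(x + y) = a = b$, hence $LM = M$ again. The only point needing care here is sign bookkeeping, i.e.\ keeping $x - y$ and $y - x$ straight.

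For the third bullet I would argue in three steps. First, $\mathbb{S}_1 \cap \mathbb{S}_2 = \{\textbf{0}\}$, since $x + y = 0$ together with $x - y = 0$ forces $x = y = 0$. Second, given an arbitrary $zI + wE \in LC_2$, I would exhibit the explicit decomposition
$$ zI + wE = \tfrac{z - w}{2}(I - E) + \tfrac{z + w}{2}(I + E), $$
where $I - E \in \mathbb{S}_1$ (since $1 + (-1) = 0$) and $I + E \in \mathbb{S}_2$ (since $1 - 1 = 0$); combined with the earlier theorem that $\mathbb{S}_1$ and $\mathbb{S}_2$ are subgroups of $LC_2$ under addition — in fact subspaces, as their defining relations are homogeneous linear — this yields $LC_2 = \mathbb{S}_1 \oplus \mathbb{S}_2$. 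Third, $\mathbb{FS}_1 \cap \mathbb{FS}_2 = \{I\}$ because $x - y = 1$ and $x + y = 1$ force $x = 1$, $y = 0$; and $I$ fixes every $M \in LC_2$ trivially, since $IM = M$ is the unity property already established for the algebra $LC_2$.

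I do not anticipate a genuine obstacle: every step reduces to a one-line linear computation. The closest thing to a subtlety is being explicit about what ``fixes every element of $LC_2$'' means here — namely the multiplicative action $M \mapsto IM$ — so that the last assertion is seen to be an immediate consequence of $I$ being the identity element rather than a separate claim requiring proof.
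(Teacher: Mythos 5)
Your proof is correct, and in fact the paper states this theorem with no proof at all, so your computation supplies what the paper omits. All three bullets check out against the multiplication rule (\ref{eq:operatormultiplication}): the sign bookkeeping in the first two bullets is right ($xa+yb=-b(x-y)=a$ and $xb+ya=b(x-y)=b$ when $x-y=1$, $a=-b$, and the symmetric computation for $\mathbb{FS}_2$, $\mathbb{S}_2$); the decomposition $zI+wE=\tfrac{z-w}{2}(I-E)+\tfrac{z+w}{2}(I+E)$ together with $\mathbb{S}_1\cap\mathbb{S}_2=\{\textbf{0}\}$ gives the direct sum; and $\mathbb{FS}_1\cap\mathbb{FS}_2=\{I\}$ follows from solving $x-y=1$, $x+y=1$, with the fixing statement being just the unity property. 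Your explicit use of the elements $\tfrac{1}{2}(I\pm E)$ is also consonant with the eigen-decomposition $\mathbb{P}_2=\mathbb{AP}_1\oplus\mathbb{P}_1$ appearing later in the paper, since these are (up to the stated scalars) the idempotents projecting onto $\mathbb{S}_2$ and $\mathbb{S}_1$; so your argument fits naturally into the paper's framework rather than diverging from it.
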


Unlike the operation of addition, $\mathbb{S}_1$ and $\mathbb{S}_2$ will not form subgroups under the operation of multiplication. However, we have the following result:

\begin{theorem}
  Each of $\mathbb{S}_1$ and $\mathbb{S}_2$ is a subring of $LC_2$ which is also an ideal of $LC_2$ .
\end{theorem}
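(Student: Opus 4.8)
The plan is to exploit the factored description of the two sets. Writing $e_1 := I - E$ and $e_2 := I + E$, one has $\mathbb{S}_1 = \mathbb{R}\, e_1$ and $\mathbb{S}_2 = \mathbb{R}\, e_2$, so each is the one-dimensional real subspace of $LC_2$ spanned by a single element. An earlier theorem already establishes that $\mathbb{S}_1$ and $\mathbb{S}_2$ are additive subgroups of $LC_2$, so to finish it suffices to prove the absorption property $LC_2 \cdot \mathbb{S}_i \subseteq \mathbb{S}_i$; since $LC_2$ is commutative, this is simultaneously a left- and a right-ideal condition. Multiplicative closure $\mathbb{S}_i \cdot \mathbb{S}_i \subseteq \mathbb{S}_i$ then comes for free as the special case in which the outer factor already lies in $\mathbb{S}_i$, and this, together with the additive-subgroup fact, is exactly what is needed for "subring" in the (identity-free) sense appropriate here.

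The one computation to carry out is the action of $e_1$ and $e_2$ on a general element $M = rI + sE \in LC_2$. Using $E^2 = I$ and the multiplication rule (\ref{eq:operatormultiplication}), I would compute $e_1 M = (I-E)(rI+sE) = (r-s)I + (s-r)E = (r-s)\,e_1$ and $e_2 M = (I+E)(rI+sE) = (r+s)I + (r+s)E = (r+s)\,e_2$. Hence for $L = x\,e_1 \in \mathbb{S}_1$ and arbitrary $M \in LC_2$ one gets $LM = x(r-s)\,e_1 \in \mathbb{S}_1$, and by commutativity $ML = LM \in \mathbb{S}_1$ as well; the identical argument with $e_2$ gives $LC_2 \cdot \mathbb{S}_2 = \mathbb{S}_2 \cdot LC_2 \subseteq \mathbb{S}_2$. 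Combined with the additive-subgroup property this proves that each $\mathbb{S}_i$ is a two-sided ideal, and restricting $M$ to lie in $\mathbb{S}_i$ yields the subring statements.

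There is no genuine obstacle here, only one point of care: "subring" must be read as not requiring a multiplicative identity, since $I \notin \mathbb{S}_1$ and $I \notin \mathbb{S}_2$; indeed each $\mathbb{S}_i$ is a proper ideal and so cannot contain a unit, let alone the unit $I$. It is worth recording the structural reason behind all of this: $\tfrac12 e_1$ and $\tfrac12 e_2$ are idempotents with $\tfrac12 e_1 + \tfrac12 e_2 = I$ and $e_1 e_2 = \textbf{0}$, i.e.\ a complete system of orthogonal idempotents, which is why $LC_2 = \mathbb{S}_1 \oplus \mathbb{S}_2$ (the earlier decomposition theorem) and why the two summands are automatically two-sided ideals, an ideal generated by a central idempotent always being such.
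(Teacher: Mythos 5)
Your proposal is correct, and it takes a somewhat different route from the paper. The paper argues by direct component computation: for $L = xI+yE$ and $L' = x'I+y'E$ in $\mathbb{S}_1$ it expands $LL' = (xx'+yy')I+(xy'+yx')E$ and observes that the sum of the components factors as $(xx'+yy')+(xy'+yx') = (x+y)(x'+y') = 0$; in effect it is using that the linear functional $xI+yE \mapsto x+y$ (resp.\ $x-y$) is multiplicative, so $\mathbb{S}_i$ is its kernel. Strictly read, the paper's displayed computation only treats the case where both factors lie in $\mathbb{S}_1$, i.e.\ the subring closure; the absorption property needed for the ideal claim follows from the same factorization by letting one factor range over all of $LC_2$, but this is left implicit, whereas you prove absorption explicitly and then get closure as a special case --- a cleaner logical order for the statement as phrased. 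Your identification $\mathbb{S}_1 = \mathbb{R}(I-E)$, $\mathbb{S}_2 = \mathbb{R}(I+E)$ and the computation $e_iM = \lambda e_i$ is the more structural view: it exhibits each $\mathbb{S}_i$ as the principal ideal generated by (twice) one of the orthogonal idempotents $\tfrac12(I\mp E)$, which also explains the decomposition $LC_2 = \mathbb{S}_1 \oplus \mathbb{S}_2$ appearing earlier in the paper, at the cost of invoking the earlier additive-subgroup theorem and the caveat (which you correctly flag) that ``subring'' is meant without identity. The paper's version buys brevity and self-containment; yours buys the explicit ideal verification and the conceptual link to the idempotent/eigenvalue structure. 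Both ultimately rest on the same one-line multiplication in $LC_2$.
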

\begin{proof}
   Let $L= xI+yE,\, L'= x'I+y'E \in  \mathbb{S}_1 $. Then $x+y=0, \, x'+y'=0 $.For the product $LL'=(xx'+yy')I+(xy'+yx')E $ we have $ (xx'+yy')+(xy'+yx') = (x+y)(x'+y')=0 $, verifying that $ LL' \in \mathbb{S}_1 $. The proof is similar for $ \mathbb{S}_2 $
\end{proof}

\begin{theorem}
  $\mathbb{S}_1$ and $\mathbb{S}_2$ are closed under the operation of multiplication and uniarticular,
  $L^n =  (2x)^{n-1}L$ whenever $L= xI+xE \in \mathbb{S}_1 $ or $L= xI-xE \in \mathbb{S}_2 $.
\end{theorem}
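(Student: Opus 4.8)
The plan is to dispatch the closure statement by citing the previous theorem and then to prove the power identity by a one-line computation of $L^2$ followed by a routine induction. For closure under multiplication nothing new is really needed: the immediately preceding theorem already shows that $\mathbb{S}_1$ and $\mathbb{S}_2$ are ideals of $LC_2$, hence in particular are closed under the multiplication (\ref{eq:operatormultiplication}). If a self-contained check is wanted, I would take $L=xI-xE$ and $L'=x'I-x'E$ in $\mathbb{S}_1$ (the defining relation $x+y=0$ in (\ref{eq:spaceS1}) forces the shift component to equal $-x$), apply (\ref{eq:operatormultiplication}) to get $LL'=2xx'I-2xx'E$, and note that its identity and shift components sum to $0$, so $LL'\in\mathbb{S}_1$; the verification for $\mathbb{S}_2$ from (\ref{eq:spaceS2}) is identical up to a sign.

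The heart of the argument is the squaring computation. For $L=xI\pm xE$, formula (\ref{eq:operatormultiplication}) gives
\[
L^2=(x\cdot x+x\cdot x)I\pm(x\cdot x+x\cdot x)E=2x^2(I\pm E)=(2x)\,L .
\]
Thus $L^2=2xL$ in both cases, and this is insensitive to the sign of the shift component, which is why the same displayed identity $L^n=(2x)^{n-1}L$ is asserted for generators of $\mathbb{S}_1$ and of $\mathbb{S}_2$. From here the general statement is a straightforward induction on $n\ge 1$: the base case $n=1$ reads $L=(2x)^0L$, and assuming $L^n=(2x)^{n-1}L$, associativity of multiplication in $LC_2$ yields $L^{n+1}=(2x)^{n-1}L^2=(2x)^{n-1}(2x)L=(2x)^nL$.

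I do not anticipate a genuine obstacle here; the entire content is the square computation above, and the only point requiring care is sign bookkeeping between the two definitions (\ref{eq:spaceS1}) and (\ref{eq:spaceS2}) — the element written ``$xI+xE$'' is the generator of $\mathbb{S}_2$ and ``$xI-xE$'' that of $\mathbb{S}_1$, so I would state the identity uniformly as: if $L$ has equal-magnitude identity and shift components $\pm x$, then $L^2=2xL$ and hence $L^n=(2x)^{n-1}L$ for all $n\ge1$.
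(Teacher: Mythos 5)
Your proposal is correct, and there is nothing in the paper to measure it against: the theorem is stated there without any proof, immediately after the subring/ideal theorem, so your argument simply supplies the missing details. Your two ingredients are exactly the natural ones — closure is already contained in the preceding theorem that $\mathbb{S}_1$ and $\mathbb{S}_2$ are ideals (whose proof via $(xx'+yy')+(xy'+yx')=(x+y)(x'+y')$ you essentially reproduce in your self-contained check), and the power identity reduces to the one-line computation $L^2=(x^2+x^2)I\pm(x^2+x^2)E=2xL$ followed by induction using associativity, which is valid. You are also right to flag the bookkeeping issue: by the definitions in (\ref{eq:spaceS1}) and (\ref{eq:spaceS2}), the element $xI-xE$ lies in $\mathbb{S}_1$ and $xI+xE$ in $\mathbb{S}_2$, i.e.\ the labels in the theorem statement are swapped relative to those definitions; since $L^2=2xL$ holds for either sign of the shift component, the identity $L^n=(2x)^{n-1}L$ is unaffected, and your uniform formulation (equal-magnitude identity and shift components) is the cleaner way to state it.
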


\subsection{ The element $L$ that are zero divisors }
\begin{definition}
A ring $R$ is said to have a \emph{zero divisor} if there exist two non-zero elements $x$ and $y$ of $R$ such that $xy=0$. In this case, both $x$ and $y$ are said to be zero divisors. A non-trivial ring with no zero divisor is called an \emph{integral domain.}
  \end{definition}
Clearly, $LC_2 $ has zero divisors, and hence it is not an integral domain. For example,
$$(E-I)(E+I)=E^2-I=0. $$
\begin{theorem}
  Let $L=aI+bE$ and $ M=cI+ d E $ be non-zero, and that $LM=0 $.  The sufficient condition that $L$ and $M$ are zero dividers  is that one of the points $(a,b)$ and $(c,d)$ is on the line $y=x$ and the other is on the line $y =-x $.
\end{theorem}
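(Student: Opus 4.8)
The plan is to read the conclusion directly off the multiplication rule (\ref{eq:operatormultiplication}) together with the relation $E^{2}=I$. Assume the stated geometric condition. By the symmetry of the roles of $L$ and $M$ I may suppose that $(a,b)$ lies on the line $y=-x$ and $(c,d)$ on the line $y=x$, i.e. $b=-a$ and $d=c$. Then $L=a(I-E)$ and $M=c(I+E)$, and since $(I-E)(I+E)=I-E^{2}=I-I=\textbf{0}$ we obtain $LM=ac\,(I-E)(I+E)=\textbf{0}$. Because $L$ and $M$ are assumed non-zero, this exhibits each of them as a zero divisor, which is exactly what the theorem asserts. The remaining case, $(a,b)$ on $y=x$ and $(c,d)$ on $y=-x$, follows by interchanging the names $L\leftrightarrow M$.

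I would also point out that the same computation yields the converse for free, so the condition is really a characterization. Expanding, $LM=(ac+bd)I+(ad+bc)E$, hence $LM=\textbf{0}$ is equivalent to the pair $ac+bd=0$ and $ad+bc=0$; adding and subtracting these gives $(a+b)(c+d)=0$ and $(a-b)(c-d)=0$. The hypothesis $L\neq\textbf{0}$ forbids $a=b=0$ and $M\neq\textbf{0}$ forbids $c=d=0$, so the only surviving possibilities are $\{\,a+b=0,\ c-d=0\,\}$ and $\{\,c+d=0,\ a-b=0\,\}$, which are precisely the two alternatives in the statement. Thus ``sufficient'' could be strengthened to ``necessary and sufficient''.

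There is essentially no obstacle here; the only point needing a little care is the degenerate overlap of the two lines. The lines $y=x$ and $y=-x$ meet only at the origin, which corresponds to the zero operator $\textbf{0}$, so the non-vanishing hypotheses on $L$ and $M$ are exactly what keeps the factorizations $L=a(I-E)$, $M=c(I+E)$ genuinely non-trivial. Alternatively one may invoke the ring isomorphism of $LC_2$ with the split-complex numbers $D$ (equivalently with $CM_2$): under it $I\mp E$ corresponds to $1\mp j$, the classical pair of zero divisors with $(1-j)(1+j)=1-j^{2}=0$, and the theorem reduces to that familiar fact. But the direct verification above is the shortest route and is the one I would write out.
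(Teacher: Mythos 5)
Your argument is correct, and it is both cleaner and more complete than the paper's own proof, though it starts from the same place. The paper only argues the ``necessity'' direction: from $ac+bd=0$ and $ad+bc=0$ it sets $\lambda=\frac{c}{d}=-\frac{b}{a}$ and deduces $ad(\lambda^2-1)=0$, hence $\lambda=\pm1$; this silently divides by $a$ and $d$, so the edge cases $a=0$ or $d=0$ (easy, but unaddressed) are left out, and its proof never touches the literal ``sufficient'' direction asserted in the statement. You do both: the direct verification $L=a(I-E)$, $M=c(I+E)$, $(I-E)(I+E)=I-E^{2}=\textbf{0}$ establishes sufficiency exactly as stated, and your add/subtract factorization $(a+b)(c+d)=0$, $(a-b)(c-d)=0$ gives the converse without any division, with the non-vanishing of $L$ and $M$ eliminating the degenerate branches $a=b=0$ and $c=d=0$; so you correctly observe the condition is in fact necessary and sufficient. (Note also that under the theorem's hypotheses ``$L,M$ nonzero and $LM=\textbf{0}$'' the zero-divisor conclusion is automatic, so the real mathematical content is precisely the necessity you supply; the paper's proof also contains a stray passage about fixed points that belongs to a different theorem, and its labeling of which point lies on which line for $\lambda=1$ is swapped, neither of which affects your argument.) What the paper's $\lambda$-trick buys is brevity when the divisions are legitimate; what your route buys is a complete, division-free characterization that also matches the structural picture of the ideals $\mathbb{S}_1$, $\mathbb{S}_2$ and the classical split-complex zero divisors $1\pm j$.
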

\begin{proof}
  $LM=0$ implies $ac+bd=0$ and $ ad+bc=0$. From the first equation, we get $\frac{c}{d}= -\frac{b}{a}=\lambda $. plugging  this into the second equation, we get
  $$ ad(\lambda^2 -1)=0. $$
  $\lambda =1  $ implies that $(a,b)$ is on the line $y = x$ and  $(c,d)$ is on the line $y= -x$. $\lambda = -1  $ implies that $(a,b)$ is on the line $y = -x$ and  $(c,d)$ is on the line $y= x$. Other nonzero elements of $LC_2$ are of the form $xI$ or $yE$ with $xy \neq 0 $. Except for the identity $I$ which fixes all elements, these have no nontrivial fixed points. Therefore, all the maps of the form that have nontrivial fixed points are those with $L=xI+yE, x^2-y^2=0, x^2 +y^2 \neq 0 $ and $L=I$.
\end{proof}

\begin{theorem}
  $$  L \in \mathbb{FS}_i \Leftrightarrow L-I \in S_{3-i},\, i=1,2. $$
\end{theorem}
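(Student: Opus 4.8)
The plan is to reduce the asserted equivalence to a direct comparison of the defining linear equations of the four sets, working entirely in the coordinate description of $LC_2$ in the basis $\{I,E\}$. First I would fix an arbitrary $L\in LC_2$ and write $L=xI+yE$ with $x,y\in\mathbb{R}$. Since $\{I,E\}$ is a basis of $LC_2$ (as established in the vector-space theorem above), the element $L-I=(x-1)I+yE$ again lies in $LC_2$, and its identity component is $x-1$ while its shift component is $y$. All of the sets $\mathbb{FS}_1,\mathbb{FS}_2,\mathbb{S}_1,\mathbb{S}_2$ are defined by a single affine or linear relation between these two components, so the whole statement is a bookkeeping identity about that relation.

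Next I would dispose of the two cases. For $i=1$: by definition $L\in\mathbb{FS}_1$ means $x-y=1$, which is equivalent to $(x-1)-y=0$; but $(x-1)$ and $y$ are exactly the identity and shift components of $L-I$, so $(x-1)-y=0$ is precisely the defining relation of $\mathbb{S}_2$, i.e.\ $L-I\in\mathbb{S}_2=\mathbb{S}_{3-1}$. For $i=2$: $L\in\mathbb{FS}_2$ means $x+y=1$, equivalent to $(x-1)+y=0$, which is the defining relation of $\mathbb{S}_1=\mathbb{S}_{3-2}$, i.e.\ $L-I\in\mathbb{S}_1$. Since every step in each chain is a genuine ``if and only if,'' both implications are obtained simultaneously, and reversing the chains gives the converse direction for free.

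I do not anticipate any real obstacle: the content is simply that the shift $x\mapsto x-1$ turns the affine condition $x\mp y=1$ cutting out $\mathbb{FS}_i$ into the homogeneous condition $x\mp y=0$ cutting out $\mathbb{S}_{3-i}$. The only thing that needs care is the index arithmetic — subtracting $I$ interchanges the roles of the ``$+$'' equation and the ``$-$'' equation, and this interchange is exactly what the index $3-i$ encodes — so I would be explicit about which component satisfies which equation at each step and make sure the equivalences are genuinely two-sided, so that the $\Leftarrow$ direction is not silently dropped. As a sanity check one may note the consistency with the earlier theorem stating $\mathbb{FS}_1\cap\mathbb{FS}_2=\{I\}$: indeed $I-I=\mathbf{0}$ lies in $\mathbb{S}_1\cap\mathbb{S}_2$, as it must.
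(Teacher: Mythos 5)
Your proof is correct, but it proceeds differently from the paper. You argue directly from the defining linear relations: writing $L=xI+yE$, the element $L-I=(x-1)I+yE$ satisfies $(x-1)\mp y=0$ exactly when $x\mp y=1$, and since $\mathbb{FS}_1,\mathbb{FS}_2$ are cut out by $x-y=1$, $x+y=1$ while $\mathbb{S}_1,\mathbb{S}_2$ are cut out by $x+y=0$, $x-y=0$, this gives $L\in\mathbb{FS}_i\Leftrightarrow L-I\in\mathbb{S}_{3-i}$ with both directions automatic. The paper instead argues structurally: $L\in\mathbb{FS}_i\Leftrightarrow LM=M$ for all $M\in\mathbb{S}_i\Leftrightarrow (L-I)M=0$ for all $M\in\mathbb{S}_i\Leftrightarrow L-I\in\mathbb{S}_{3-i}$, i.e.\ it exhibits $\mathbb{S}_{3-i}$ as the annihilator of $\mathbb{S}_i$ and ties the statement to the fixed-point and zero-divisor theorems. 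The structural route is more illuminating about why the index flips, but as written it silently uses two converses that the earlier theorems only state in one direction (that $LM=M$ for all $M\in\mathbb{S}_i$ forces $L\in\mathbb{FS}_i$, and that $NM=0$ for all $M\in\mathbb{S}_i$ forces $N\in\mathbb{S}_{3-i}$); verifying those would require essentially the coordinate check you carry out, so your version is the more self-contained and gap-free of the two. One small correction to your closing remark: subtracting $I$ does not interchange the ``$+$'' and ``$-$'' equations --- the translation $x\mapsto x-1$ preserves the sign type, as your own displayed equivalences show --- rather, the index swap $i\mapsto 3-i$ is forced because the paper's labelling conventions for $\mathbb{FS}_i$ and $\mathbb{S}_i$ attach opposite signs to the same index.
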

\begin{proof}
  \begin{align*}
    L \in \mathbb{FS}_i & \Leftrightarrow LM=M,\,  \forall M \in \mathbb{S}_i  \\
     & \Leftrightarrow (L-I)M=0,\, \forall M \in \mathbb{S}_i \\
     & \Leftrightarrow (L-I) \in \mathbb{S}_{3-i},\, i=1,2.
  \end{align*}
\end{proof}

\subsection{ Elementary functions of $L$ }
 Following the usual Taylor's series expansion for exponential functions and the trigonometric functions sines, and cosines, we define the exponentials, the sines and the cosines of $L$  in this subsection.
\subsubsection{Exponential function}
\begin{definition}
  For $L \in LC_2 $ we have
  \begin{equation}\label{eq:exponentialofL}
    e^L:= \sum_{n=0}^{\infty}\frac{L^n}{n!}
  \end{equation}
\end{definition}

\begin{theorem}
For $L =xI+yE \in LC_2 $,  we have
\begin{equation} \label{eq:exponentialform}
  e^{L}= e^ {x} \cosh (y) I +  e^ {x} \sinh (y) E.
\end{equation}
\end{theorem}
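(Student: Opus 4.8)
The plan is to compute the exponential series directly by exploiting the algebra structure of $LC_2$, in particular the relation $E^2 = I$. First I would write $e^L = e^{xI + yE}$ and observe that $xI$ and $yE$ commute in $LC_2$ (scalar multiples of $I$ commute with everything), so the usual argument for the exponential of a sum of commuting elements gives $e^{xI + yE} = e^{xI}\, e^{yE}$. Since $e^{xI} = \sum_{n\ge 0} (xI)^n/n! = \bigl(\sum_{n\ge 0} x^n/n!\bigr) I = e^x I$, it remains to identify $e^{yE}$.

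The key step is to evaluate $e^{yE} = \sum_{n=0}^{\infty} \frac{y^n E^n}{n!}$ by splitting the sum into even and odd indices. Because $E^2 = I$, we have $E^{2k} = I$ and $E^{2k+1} = E$ for all $k \ge 0$. Hence
\begin{equation}\label{eq:splitsum}
  e^{yE} = \left(\sum_{k=0}^{\infty} \frac{y^{2k}}{(2k)!}\right) I + \left(\sum_{k=0}^{\infty} \frac{y^{2k+1}}{(2k+1)!}\right) E = \cosh(y)\, I + \sinh(y)\, E,
\end{equation}
using the standard Taylor series for $\cosh$ and $\sinh$. Multiplying $e^{xI} = e^x I$ by \eqref{eq:splitsum} and using $I$ as the unit of the algebra then yields $e^L = e^x\cosh(y)\, I + e^x\sinh(y)\, E$, which is \eqref{eq:exponentialform}.

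The one point that deserves care — and which I expect to be the main obstacle to a fully rigorous write-up — is the justification for manipulating the infinite series: rearranging terms by parity, factoring $e^{xI+yE} = e^{xI}e^{yE}$, and pulling scalars out. This requires a notion of convergence for series of operators in $LC_2$. Since $LC_2 \cong \mathbb{R}^2$ as a vector space (identifying $xI + yE$ with $(x,y)$), one can equip it with, say, the Euclidean norm, note that the series $\sum L^n/n!$ converges absolutely because $\|L^n\| \le C\,\|L\|^n$ for a submultiplicativity-type constant $C$ (or simply because each component series converges absolutely by comparison with the scalar exponential series), and then all the rearrangements are legitimate by absolute convergence. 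Alternatively, one can avoid analysis entirely by computing the $I$- and $E$-components of the partial sums $\sum_{n=0}^{N} L^n/n!$ using the multiplication rule \eqref{eq:operatormultiplication} and the parity of $E^n$, obtaining partial sums of the $\cosh$ and $\sinh$ series in each component, and passing to the limit componentwise. I would present the componentwise version, as it keeps the argument self-contained within the framework already established.
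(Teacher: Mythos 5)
Your proposal follows essentially the same route as the paper's own proof: factoring $e^{xI+yE}=e^{xI}e^{yE}$ via commutativity, computing $e^{xI}=e^{x}I$, and splitting the series for $e^{yE}$ into even and odd powers of $E$ using $E^{2}=I$ to obtain $\cosh(y)I+\sinh(y)E$. Your added discussion of convergence and componentwise rearrangement is a sound extra justification that the paper simply takes for granted.
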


 \begin{proof}
 \begin{align*}
  e^{xI }&= \sum_{n=0}^{\infty}\frac{x^nI^n}{n!}= I \sum_{n=0}^{\infty}\frac{x^n}{n!}=  e^{x}I , \\
   e^{yE }&= \sum_{n=0}^{\infty}\frac{y^nE^n}{n!}=  \sum_{n=0}^{\infty}\frac{y^{2n} E^{2n}}{(2n)!} + \sum_{n=0}^{\infty}\frac{y^{2n+1}E ^{2n+1} }{(2n+1)!} =  \cosh (y) I+ \sinh (y) E.
  \end{align*}
 Therefore, for $L =xI+yE \in LC_2 $  we have
$$ e^{L}= e^ {xI+yE}= e^ {xI}e^ {yE} = e^ {x} \cosh (y) I +  e^ {x} \sinh (y) E. $$
\end{proof}

\subsubsection{Trigonometric functions}
\begin{definition}
  For $L \in LC_2 $ we have
  \begin{equation}\label{eq:sineandcosine}
  \cos L:= \sum_{n=0}^{\infty} (-1)^n \frac{L^{2n}}{(2n)!},\quad  \sin L:= \sum_{n=0}^{\infty} (-1)^n \frac{L^{2n+1}}{(2n+1)!}
\end{equation}
\end{definition}

\begin{theorem}
  Let $ x, y \in \mathbb{R} $. We have the following identities
  \begin{equation}\label{eq:cosxI}
   \cos(xI)= \cos (xE)= \cos (x)I,
  \end{equation}
  \begin{equation}\label{eq:sinxIandsinxE}
   \sin (xI)= \sin(x)I, \quad  \sin(xE)= \cos(x) E.
  \end{equation}
\end{theorem}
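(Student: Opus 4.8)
The plan is to prove all four identities directly from the series definitions in (\ref{eq:sineandcosine}), exploiting the two algebraic facts that govern powers of the generators: $I^{k}=I$ for every $k$, and, because $E^{2}=I$, the power $E^{m}$ collapses to $I$ when $m$ is even and to $E$ when $m$ is odd. In each case I would substitute $L=xI$ or $L=xE$ into the appropriate series, use these collapses to pull the surviving operator ($I$ or $E$) outside the sum, and then recognize the remaining real series as $\cos x$ or $\sin x$. Since every element of $LC_2$ has a unique $I$-component and $E$-component by definition (\ref{eq:LC2}), matching components on both sides will finish each identity.

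For the cosine identities I would start with $\cos(xI)=\sum_{n=0}^{\infty}(-1)^{n}\frac{x^{2n}I^{2n}}{(2n)!}$. Here $I^{2n}=I$, so the operator factors out and the scalar series is exactly $\cos x$, giving $\cos(xI)=(\cos x)I$. For $\cos(xE)$ the only new ingredient is that the exponents appearing are the even numbers $2n$, and $E^{2n}=(E^{2})^{n}=I$; hence the same scalar series emerges and $\cos(xE)=(\cos x)I$ as well. This establishes (\ref{eq:cosxI}) in one stroke, and the key observation is that cosine sees only even powers, where $E$ and $I$ are indistinguishable.

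The sine identities force the odd powers into play, and this is where the real content of the theorem lies. For $\sin(xI)=\sum_{n=0}^{\infty}(-1)^{n}\frac{x^{2n+1}I^{2n+1}}{(2n+1)!}$ we again have $I^{2n+1}=I$, so $\sin(xI)=(\sin x)I$, which settles the first half of (\ref{eq:sinxIandsinxE}). For $\sin(xE)$ the exponents are the odd numbers $2n+1$, and here $E^{2n+1}=(E^{2})^{n}E=E$, so the operator that survives is $E$ rather than $I$, multiplied by the scalar series $\sum_{n=0}^{\infty}(-1)^{n}\frac{x^{2n+1}}{(2n+1)!}$, which is $\sin x$. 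This is exactly the bookkeeping already validated in the exponential theorem (\ref{eq:exponentialform}), where the odd-power terms of $e^{yE}$ correctly assembled into $\sinh(y)E$.

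The main obstacle is precisely the second identity in (\ref{eq:sinxIandsinxE}): the computation just described delivers $(\sin x)E$, whereas the statement asserts $\sin(xE)=(\cos x)E$. To confront this directly I would re-examine the three places where a $\cos x$ could plausibly enter — the index shift between $\sin L$ and $\cos L$, a possible phase convention in the definition of $\sin L$, and the reduction rule for $E^{2n+1}$ — and check whether any of them could convert the odd-power coefficients $\frac{(-1)^{n}x^{2n+1}}{(2n+1)!}$ into the even-power coefficients $\frac{(-1)^{n}x^{2n}}{(2n)!}$ that $\cos x$ requires. From the definitions available, no such mechanism is present: $E^{2n+1}=E$ is forced by $E^{2}=I$, and the sine series carries only odd powers of its argument. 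The argument therefore pins the discrepancy to this single coefficient mismatch, and reconciling the stated $(\cos x)E$ with the series output $(\sin x)E$ is the one step the proof must resolve before (\ref{eq:sinxIandsinxE}) can be asserted as written.
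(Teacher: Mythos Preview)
Your approach is identical to the paper's: both substitute $L=xI$ or $L=xE$ into the series definitions (\ref{eq:sineandcosine}), use $I^{k}=I$ and $E^{2n}=I$, $E^{2n+1}=E$, and then identify the remaining scalar series. Your computations for $\cos(xI)$, $\cos(xE)$, and $\sin(xI)$ match the paper line for line.

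On the last identity you have found a genuine error in the \emph{statement}, not a gap in your reasoning. The paper's own proof carries out exactly the computation you describe and arrives at
\[
\sin(xE)=\sum_{n=0}^{\infty}\frac{(-1)^n}{(2n+1)!}x^{2n+1}E^{2n+1}
       =\sum_{n=0}^{\infty}\frac{(-1)^n}{(2n+1)!}x^{2n+1}E
       =\sin(x)\,E,
\]
so the $\cos(x)E$ appearing in (\ref{eq:sinxIandsinxE}) is a typographical slip; the intended and proved identity is $\sin(xE)=\sin(x)E$. Your diagnosis of the coefficient mismatch is therefore exactly right, and there is nothing further for you to reconcile.
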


\begin{proof}
We use the Maclaurin's series expansions of cosine and sine  defined in (\ref{eq:sineandcosine}) and the fact that $E^2=I$, we get
$$ \cos(xE)= \sum_{n=0}^{\infty}   \frac{(-1)^n}{(2n)!} x^{2n} E^{2n} = \sum_{n=0}^{\infty}   \frac{(-1)^n}{(2n)!} x^{2n} I =   \cos(x) I,   $$

$$ \sin(xI)= \sum_{n=0}^{\infty} \frac{(-1)^n}{(2n+1)!} x^{2n+1} I^{2n+1}= \sum_{n=0}^{\infty} \frac{(-1)^n}{(2n+1)!} x^{2n+1} I = \sin (x) I,   $$

$$ \sin(xE)= \sum_{n=0}^{\infty} \frac{(-1)^n}{(2n+1)!} x^{2n+1} E^{2n+1}= \sum_{n=0}^{\infty} \frac{(-1)^n}{(2n+1)!} x^{2n+1} E = \sin (x) E . $$
\end{proof}

\begin{theorem}
   \begin{equation}\label{eq:sinL}
   \sin(xI+yE)=  \sin(x)\cos(y) I+\cos(x)\sin(y) E
  \end{equation}
  \begin{equation}\label{eq:cosL}
  \cos(xI+yE)= \cos(x)\cos(y) I- \sin(x)\sin(y) E
  \end{equation}
\end{theorem}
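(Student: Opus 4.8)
The plan is to reduce the sine and cosine of $L = xI + yE$ to the already-established one-variable identities by exploiting the fact that $xI$ and $yE$ commute in the commutative algebra $LC_2$. First I would invoke the operator addition formulas for sine and cosine, namely $\sin(A+B) = \sin A \cos B + \cos A \sin B$ and $\cos(A+B) = \cos A \cos B - \sin A \sin B$, which hold for any two commuting elements $A, B$ of a commutative algebra because the usual power-series manipulation (Cauchy product, reindexing by $2n$ and $2n+1$) goes through verbatim when $AB = BA$. Here $A = xI$ and $B = yE$ commute since $I$ is the identity, so this step is legitimate; I would state this briefly and perhaps reference the Taylor series definitions in (\ref{eq:sineandcosine}).

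Next I would substitute $A = xI$ and $B = yE$ and plug in the single-variable evaluations already proved in (\ref{eq:cosxI}) and (\ref{eq:sinxIandsinxE}): $\sin(xI) = \sin(x) I$, $\cos(xI) = \cos(x) I$, $\sin(yE) = \sin(y) E$, and $\cos(yE) = \cos(y) I$. So for the sine,
\begin{equation*}
\sin(xI + yE) = \sin(xI)\cos(yE) + \cos(xI)\sin(yE) = \big(\sin(x) I\big)\big(\cos(y) I\big) + \big(\cos(x) I\big)\big(\sin(y) E\big),
\end{equation*}
and using $I^2 = I$ and $IE = E$ this collapses to $\sin(x)\cos(y) I + \cos(x)\sin(y) E$, which is (\ref{eq:sinL}). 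The cosine case is identical:
\begin{equation*}
\cos(xI + yE) = \cos(xI)\cos(yE) - \sin(xI)\sin(yE) = \cos(x)\cos(y) I - \sin(x)\sin(y) E,
\end{equation*}
giving (\ref{eq:cosL}). All the operator products are trivial because one factor is always a scalar multiple of $I$.

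The only genuine content is the commuting-elements addition formula, so if I wanted the proof fully self-contained I would instead argue directly from the series: write $L^n = (xI + yE)^n$ and expand by the binomial theorem (valid since $xI$ and $yE$ commute), or more slickly note $L^{2n} = (x^2 + y^2)^n I + 2xy\big((x^2+y^2)^{n-1} \cdot \text{something}\big) E$ — but this gets messy. The cleaner route is to observe $\cos L = \tfrac{1}{2}(e^{iL} + e^{-iL})$-type reasoning fails here because $i \notin LC_2$, so I would instead just cite that the addition theorems for $\sin$ and $\cos$ hold in any commutative Banach algebra (or formal power series algebra) for commuting elements, which is standard. I expect the main obstacle, such as it is, to be purely expository: making the invocation of the operator addition formula rigorous without belaboring the power-series bookkeeping, and being careful that the convergence of the series (\ref{eq:exponentialofL}) and (\ref{eq:sineandcosine}) in the operator norm — or entrywise, via the circulant-matrix isomorphism mentioned in the introduction — is not in doubt. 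Once that is granted, the theorem is a two-line substitution.
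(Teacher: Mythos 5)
Your proposal is correct and follows essentially the same route as the paper: apply the sine and cosine addition formulas to the commuting summands $xI$ and $yE$ and then substitute $\sin(xI)=\sin(x)I$, $\cos(xI)=\cos(x)I$, $\sin(yE)=\sin(y)E$, $\cos(yE)=\cos(y)I$. The only difference is that you explicitly justify the addition formulas for commuting elements via the power-series (Cauchy product) argument, whereas the paper simply assumes "the usual trigonometric identities" apply, so your write-up is if anything slightly more complete.
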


\begin{proof}
Assuming that the usual trigonometric identities of sine and cosine are also applicable, we have
\begin{itemize}
   \item $ \sin (xI+yE) = \sin(xI)\cos(yE)+\cos(xI)\sin(yE) = \sin(x)\cos(y) I+\cos(x)\sin(y)E   $

\item $ \cos (xI+yE)= \cos(xI)\cos(yE)-\sin(xI)\sin(yE) = \cos(x)\cos(y) I- \sin(x)\sin(y)E  $
  \end{itemize}
\end{proof}

\begin{example}
Let us find the points $ L \in LC_2 $  where $\sin L $ and $ \cos L $ are not invertible. According to Theorem \ref{eq:invertiblitytheorem}, the points $ L \in LC_2 $  where  $ \sin L $ is not invertible are
$$ \{ xI+yE: x+y = n \pi,    n \in \mathbb{Z }\}  \cup  \{xI+yE: x-y = n \pi,   n \in \mathbb{Z } \}.  $$
The set of points $ L \in LC_2 $  where  $ \cos L $   is not invertible is
$$ \{  xI+yE: x+y = (2n+1) \pi/2,   n \in \mathbb{Z } \}  \cup  \{ L = xI+yE: x-y = (2n+1) \pi/2,  n \in \mathbb{Z }\}  .$$
\end{example}
\subsubsection{Square root of $L= xI+yE \in LC_2 $}
\begin{definition}
   $L= aI+bE \in LC_2 $  is a square root of  $L= xI+yE \in LC_2 $ if
   \begin{equation}\label{eq:Lsquared}
     L^2 = ( aI+bE) ^2 = (a^2+b^2)I+ 2ab E = xI+yE.
   \end{equation}
In this case we write as $ \sqrt{xI + yE}= aI + bE. $
\end{definition}
From (\ref{eq:Lsquared}) we can calculate that
\begin{equation}\label{eq:squarerootequation}
   \sqrt{xI + yE}= \frac{1}{2} ( \sqrt{x+y}+ \sqrt{x-y})I + \frac{1}{2} ( \sqrt{x+y}- \sqrt{x-y})E,
\end{equation}

provided that $x+y \geq 0, \, x-y \geq 0 $. That is, $ xI +yE \in \mathcal{E}$.

\subsubsection{Logarithmic function}
\begin{theorem}
  Let $L= xI+ yE \in LC_2,\, x^2-y^2 > 0,\, x > 0 $. We have the following identities
  \begin{equation}\label{eq:logartiminhyperbolic}
   \ln (L)=\ln (xI+yE)=  \frac{1}{2}\ln(x^2-y^2)I + \atanh (y/x)E
  \end{equation}
\end{theorem}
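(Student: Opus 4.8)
The plan is to \emph{define} $\ln(L)$ as an inverse of the exponential map, i.e.\ to look for $M=uI+vE\in LC_2$ with $e^{M}=L$, and then to read off $M$ from the explicit formula $e^{uI+vE}=e^{u}\cosh(v)\,I+e^{u}\sinh(v)\,E$ already proved. Since $\{I,E\}$ is a basis of $LC_2$, equating the identity and shift components of $e^{M}=xI+yE$ gives the system
\begin{equation*}
e^{u}\cosh(v)=x,\qquad e^{u}\sinh(v)=y .
\end{equation*}
As $e^{u}>0$ and $\cosh(v)\ge 1$, the first equation already forces $x>0$; squaring the two equations and subtracting, using $\cosh^{2}(v)-\sinh^{2}(v)=1$, gives $e^{2u}=x^{2}-y^{2}$, which forces $x^{2}-y^{2}>0$. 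Thus the hypotheses $x>0$ and $x^{2}-y^{2}>0$ are exactly the compatibility conditions for solvability.

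Next I would solve the system. From $e^{2u}=x^{2}-y^{2}$ we get $u=\tfrac12\ln(x^{2}-y^{2})$, and dividing the two equations gives $\tanh(v)=y/x$. Because $x^{2}-y^{2}>0$ and $x>0$ we have $x>\abs{y}$, hence $\abs{y/x}<1$ and $v=\atanh(y/x)$ is well defined; since $\sinh$ is a bijection of $\mathbb{R}$, this $v$ is moreover the unique such value, so $\ln(L)$ is single-valued here, in contrast with the complex logarithm. This yields
\begin{equation*}
\ln(L)=uI+vE=\tfrac12\ln(x^{2}-y^{2})\,I+\atanh(y/x)\,E,
\end{equation*}
as claimed.

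To keep the argument self-contained I would then verify directly that this $M$ satisfies $e^{M}=L$: with $t=y/x$ one has $\cosh(\atanh t)=1/\sqrt{1-t^{2}}=x/\sqrt{x^{2}-y^{2}}$ (using $x>0$) and $\sinh(\atanh t)=t/\sqrt{1-t^{2}}=y/\sqrt{x^{2}-y^{2}}$, while $e^{u}=\sqrt{x^{2}-y^{2}}$; substituting into $e^{u}\cosh(v)\,I+e^{u}\sinh(v)\,E$ returns $xI+yE$ exactly. Alternatively, the same formula drops out of the ``component'' picture: writing $L=(x+y)\tfrac{I+E}{2}+(x-y)\tfrac{I-E}{2}$ in terms of the orthogonal idempotents $\tfrac{I\pm E}{2}$, the logarithm acts on each component separately, giving $\ln(L)=\ln(x+y)\tfrac{I+E}{2}+\ln(x-y)\tfrac{I-E}{2}$; collecting the $I$- and $E$-parts produces $\tfrac12\ln\bigl((x+y)(x-y)\bigr)\,I+\tfrac12\ln\bigl(\tfrac{x+y}{x-y}\bigr)\,E$, which is the asserted expression since $\tfrac12\ln\tfrac{1+s}{1-s}=\atanh(s)$ with $s=y/x$.

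There is no serious obstacle. The only points needing care are (i) checking that the two stated hypotheses are precisely the solvability/consistency conditions, and noting that they entail $x+y>0$ and $x-y>0$ (so $L$ lies ``inside the light cone'' where the idempotent components are positive and their logarithms exist), and (ii) recording that, unlike over $\mathbb{C}$, the resulting logarithm is single-valued because $\sinh$ is injective on $\mathbb{R}$; both are short.
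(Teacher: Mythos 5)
Your proposal is correct and follows essentially the same route as the paper: set $\ln(L)=uI+vE$, use $e^{uI+vE}=e^{u}\cosh(v)I+e^{u}\sinh(v)E$, equate the $I$- and $E$-components to get $e^{u}\cosh(v)=x$, $e^{u}\sinh(v)=y$, and solve for $u=\tfrac12\ln(x^{2}-y^{2})$, $v=\atanh(y/x)$, noting that the hypotheses $x>0$, $x^{2}-y^{2}>0$ (i.e.\ $x>\abs{y}$) are exactly the solvability conditions. Your added direct verification and the alternative decomposition via the idempotents $\tfrac{I\pm E}{2}$ are nice extras but do not change the substance of the argument.
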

\begin{proof}
  Let $\ln( xI+yE)= \alpha I + \beta E $. Then  $ xI+yE= e^{\alpha I + \beta E } $.
  From (\ref{eq:exponentialform}) it follows that $xI+yE= e^{\alpha} ( \cosh \beta I + \sinh \beta E) $. So, equating the coefficients of $I$ and $E$ we get
  \begin{equation}\label{eq:lnparametrized}
     x= e^{\alpha} \cosh \beta, \quad y=  e^{\alpha} \sinh \beta.
  \end{equation}
The desired result (\ref{eq:logartiminhyperbolic}) follows from  (\ref{eq:lnparametrized}).
we have $x^2-y^2= e^{2\alpha} > 0 $ and also $x = e^{\alpha} \cosh \beta > 0 $. Therefore the domain of the logarithm $ \ln L $ is
$$  \{xI+yE \in LC_2 : x > |y| \}  $$
\end{proof}

\subsection{ $ \mathbb{C} $ is a field,  $LC_2$ is a ring that is  not  a field }
 We have seen that $(LC_2,+) $ is an Abelian group, that $ (LC_2,+,.) $ is a ring, which is a vector space over $ \mathbb{R} $, and that is an associative algebra. It satisfies many of the properties of the field, that is a commutative ring all of whose non zero elements have multiplicative inverse. The problem is that for the case of $LC_2 $, not only the zero element zero  is a not invertible. Rather, the set of points $xI+yE \in LC_2 $ with  the condition $x^2-y^2=0 $  consists of infinitely many points that are not invertible. This means that $LC_2 $ has nonzero elements that are not invertible.
$LC_2$ isomorphic to the ring $ CM_2 $ of  all $ 2 \times 2 $ real  circulant matrices  which is not a field. See \cite{WP} and the references cited therein.

\subsection{ Spectrum, trace, and determinant of an operator $L=aI+bE \in LC_2 $}
We calculate the traces  and determinant of a split-complex number in its new characterization as a linear operator acting on spaces of 2-antiperiodic functions and verify that they are exactly the same as the trace and determinant of a cirulant matrix.

\begin{theorem}
Let $L=aI+bE \in LC_2 $.  Then the eigenvalues $L $  are the numbers $ \lambda_1= a+b $ and $ \lambda_2= a-b $.
  \end{theorem}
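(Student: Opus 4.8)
The plan is to read off the eigenvalues directly from the eigenvalue equation on $\mathbb{P}_2$, using the defining relation $E^2=I$. Write $L=aI+bE$. If $f\in\mathbb{P}_2$ with $f\not\equiv 0$ is an eigenfunction for the eigenvalue $\lambda$, then $(aI+bE)f=\lambda f$, i.e.
\[
 b\,f(x+1)=(\lambda-a)\,f(x),\qquad x\in\mathbb{R}.
\]
First I would dispose of the degenerate case $b=0$: then $L=aI$, every nonzero $f\in\mathbb{P}_2$ is an eigenfunction with eigenvalue $a=a+b=a-b$, so the statement holds (the two eigenvalues coincide). Assume from now on that $b\neq 0$.

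For $b\neq 0$ the displayed identity gives $f(x+1)=\tfrac{\lambda-a}{b}f(x)$; iterating once and invoking $f(x+2)=f(x)$, which holds precisely because $f\in\mathbb{P}_2$, yields $\bigl(\tfrac{\lambda-a}{b}\bigr)^2 f(x)=f(x)$ for all $x$. Evaluating at a point $x_0$ with $f(x_0)\neq 0$ forces $\bigl(\tfrac{\lambda-a}{b}\bigr)^2=1$, hence $\lambda=a+b$ or $\lambda=a-b$; these are the only possible eigenvalues. It then remains to check that both are attained. For $\lambda=a+b$ the eigenvalue equation reduces to $f(x+1)=f(x)$, so the eigenspace is $\mathbb{P}_1\subset\mathbb{P}_2$, which contains the nonzero constant functions; thus $a+b$ is an eigenvalue. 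For $\lambda=a-b$ it reduces to $f(x+1)=-f(x)$, so the eigenspace is $\mathbb{AP}_1\subset\mathbb{P}_2$, which contains $\cos\pi x\not\equiv 0$; thus $a-b$ is an eigenvalue. Here the decomposition $\mathbb{P}_2=\mathbb{P}_1\oplus\mathbb{AP}_1$ of \cite{HBY} can be cited, or equivalently one can note that $E^2=I$ gives $(E-I)(E+I)=0$ on $\mathbb{P}_2$, so $\mathbb{P}_2$ splits as the direct sum of the $+1$- and $-1$-eigenspaces of $E$, on which $L=aI+bE$ acts as $(a+b)I$ and $(a-b)I$ respectively — this is an alternative one-line proof.

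I do not expect a genuine obstacle: the argument is elementary, and the only point that deserves a word of care is what ``eigenvalue'' means for an operator on the infinite-dimensional space $\mathbb{P}_2$. I would fix the purely algebraic reading, $\lambda$ is an eigenvalue iff $\ker(L-\lambda I)\neq\{\mathbf{0}\}$, and add a consistency remark tying this to the surrounding discussion of spectrum, trace and determinant: $\lambda_1+\lambda_2=2a$ and $\lambda_1\lambda_2=a^2-b^2$ agree with the trace and determinant of the real circulant matrix with first row $(a,b)$, so the eigenvalues of $L$ are exactly those of its image under the isomorphism $LC_2\cong CM_2$, with eigenfunctions corresponding to the eigenvectors $(1,1)$ and $(1,-1)$.
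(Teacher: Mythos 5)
Your proposal is correct, and it reaches the same algebraic core as the paper --- the condition $(\lambda-a)^2=b^2$ --- but by a somewhat different mechanism. The paper applies $E$ to the eigenvalue equation $(aI+bE)f=\lambda f$ to obtain the pair of relations written as a homogeneous $2\times 2$ system in the ``unknowns'' $f$ and $Ef$, and extracts the eigenvalues by setting the circulant determinant $(a-\lambda)^2-b^2$ equal to zero; this keeps the argument parallel to the circulant-matrix picture $LC_2\cong CM_2$ that runs through the paper. You instead rewrite the eigenvalue equation as the functional recursion $b\,f(x+1)=(\lambda-a)f(x)$, iterate once, and invoke $2$-periodicity at a point where $f$ does not vanish; this is equally valid and, if anything, slightly cleaner on one technical point, since evaluating at a point with $f(x_0)\neq 0$ avoids the mild awkwardness of treating the functions $f$ and $Ef$ as entries of a scalar linear system. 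Your proof also does two things the paper's proof of this theorem does not: it isolates the degenerate case $b=0$, and it verifies that both values $a\pm b$ are actually attained, by exhibiting eigenfunctions in $\mathbb{P}_1$ and $\mathbb{AP}_1$ (the paper only establishes necessity here and defers attainment to the following theorem on eigenspaces, via the decomposition $\mathbb{P}_2=\mathbb{P}_1\oplus\mathbb{AP}_1$ of \cite{HBY}). Your closing observation that $E^2=I$ splits $\mathbb{P}_2$ into the $\pm 1$ eigenspaces of $E$, on which $L$ acts as $(a+b)I$ and $(a-b)I$, is essentially that same decomposition and does give a one-line structural proof of both this theorem and the next; it is a nice economy, though the paper's determinant route has the expository advantage of foreshadowing the trace and determinant formulas that follow.
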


\begin{proof}
Let $\lambda $ be an eigenvalue of an operator $L=aI+bE \in LC_2 $. Then we have
 \begin{equation}\label{eq:eigenequation1}
    (aI+bE)f= \lambda f,
 \end{equation}
for some $ f\in \mathbb{P}_2, f \neq 0 $. Applying the shift operator $ E $, we get
 \begin{equation}\label{eq:eigenequation2}
    ( aE+bI)f= \lambda Ef.
 \end{equation}
 From (\ref{eq:eigenequation1}) and (\ref{eq:eigenequation2}), we get the system of equations
  \begin{equation}\label{eq:matrixsystem}
    \begin{pmatrix}
   a-\lambda & b   \\
   b &  a-\lambda
 \end{pmatrix}  \begin{pmatrix}
                  f \\
                  Ef
                \end{pmatrix}=\begin{pmatrix}
                                0 \\
                                0
                              \end{pmatrix} .
  \end{equation}
This homogeneous system has a nontrivial solution, which is an eigenvector of the operator $L=aI+bE $, only if the determinant of the coefficient matrix is zero. That is when
$$ \lambda= \lambda_1 = a+b, \quad \lambda = \lambda_2= a-b. $$
\end{proof}

\begin{theorem}
  The eigenspace of $L=aI+bE $ corresponding to the eigenvalue $\lambda = a+b $ is the set $\mathbb{P}_1 $  of all 1- periodic functions. The eigenspace  of $L=aI+bE $ corresponding to the eigenvalue $\lambda = a-b $  is the set $\mathbb{AP}_1 $   of all 1-antiperiodic functions.
  \end{theorem}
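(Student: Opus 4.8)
The plan is to translate the operator eigenvalue equation $(aI+bE)f = \lambda f$ into a pointwise functional equation and read off which $f \in \mathbb{P}_2$ satisfy it for each of the two eigenvalues produced by the previous theorem. Writing the action of $L$ explicitly, $(aI+bE)f(x) = af(x) + bf(x+1)$, so the eigenvalue equation becomes $af(x) + bf(x+1) = \lambda f(x)$ for all $x \in \mathbb{R}$, i.e. $b\,f(x+1) = (\lambda - a)f(x)$. Throughout I would assume $b \neq 0$, which is precisely the condition under which the two eigenvalues $a+b$ and $a-b$ are distinct; the degenerate case $b = 0$ gives $L = aI$, whose only eigenvalue is $a$ with eigenspace all of $\mathbb{P}_2$, and can be dismissed in one line.

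First I would take $\lambda = \lambda_1 = a+b$, so that $\lambda - a = b$. Dividing by $b$, the equation reduces to $f(x+1) = f(x)$ for all $x$, that is $Ef = f$, which is exactly the assertion $f \in \mathbb{P}_1$. For the reverse inclusion I would observe that any $1$-periodic function is automatically $2$-periodic, hence lies in the domain $\mathbb{P}_2$, and satisfies $Ef = f$, whence $(aI+bE)f = af + bf = (a+b)f$; so every element of $\mathbb{P}_1$ is genuinely an eigenvector for $\lambda_1$. This establishes $\ker(L - \lambda_1 I) = \mathbb{P}_1$.

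Next, for $\lambda = \lambda_2 = a-b$ we have $\lambda - a = -b$, so the equation becomes $f(x+1) = -f(x)$, i.e. $Ef = -f$, which is the defining relation of $\mathbb{AP}_1$. The reverse inclusion is entirely parallel: a $1$-antiperiodic function satisfies $E^2f = f$, hence belongs to $\mathbb{P}_2$, and $(aI+bE)f = af - bf = (a-b)f$. Thus $\ker(L - \lambda_2 I) = \mathbb{AP}_1$.

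There is no genuinely hard step here; the only points demanding care are (i) retaining the hypothesis $b \neq 0$ so that the division is legitimate and the two eigenspaces are actually different, and (ii) verifying \emph{both} inclusions, not merely the ``eigenvector $\Rightarrow$ periodicity'' direction. As a closing remark I would link this to the splitting $\mathbb{P}_2 = \mathbb{P}_1 \oplus \mathbb{AP}_1$ via $f = \tfrac12(f + Ef) + \tfrac12(f - Ef)$, so that $L$ is diagonalizable on $\mathbb{P}_2$ with spectral resolution $L = (a+b)P_+ + (a-b)P_-$, where $P_\pm = \tfrac12(I \pm E)$ are the associated idempotents --- consistent with the earlier results on $\mathbb{S}_1$, $\mathbb{S}_2$ and on the trace and determinant of $L$.
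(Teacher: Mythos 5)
Your argument is correct, and it takes a genuinely different route from the paper. The paper only verifies the easy direction --- that $(aI+bE)f=(a+b)f$ for $f\in\mathbb{P}_1$ and $(aI+bE)f=(a-b)f$ for $f\in\mathbb{AP}_1$ --- and then appeals to the direct sum decomposition $\mathbb{P}_2=\mathbb{P}_1\oplus\mathbb{AP}_1$ established in the author's earlier work to conclude that these subspaces are the \emph{full} eigenspaces; the reverse inclusion is thus obtained structurally (an eigenvector $f=f_1+f_2$ with distinct eigenvalues forces one component to vanish), though the paper leaves that last step implicit. You instead work pointwise with the functional equation $b\,f(x+1)=(\lambda-a)f(x)$ and read off $Ef=f$ or $Ef=-f$ directly, so your proof is self-contained and does not invoke the cited decomposition at all (you only mention it as a closing remark, where it reappears as the spectral resolution $L=(a+b)P_+ +(a-b)P_-$ with $P_\pm=\tfrac12(I\pm E)$). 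A genuine merit of your version is that it makes the hypothesis $b\neq 0$ explicit: when $b=0$ the division is illegitimate and the stated conclusion actually fails, since $L=aI$ has all of $\mathbb{P}_2$ as its eigenspace --- a caveat the paper's proof (and statement) passes over silently. Conversely, the paper's route shows how the theorem is really a reformulation of the eigen-decomposition $\mathbb{P}_2=\mathbb{P}_1\oplus\mathbb{AP}_1$, which is the point emphasized in the remark that follows it.
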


\begin{proof}
If $f \in \mathbb{P}_1 \subset \mathbb{P}_2 $, then $ (aI+bE)f= (a+b)f$, and if $f \in \mathbb{AP}_1 \subset \mathbb{P}_2 $ then  $ (aI+bE)f= (a-b)f $.  In \cite{HBY}, it was shown that
\begin{equation}\label{eq:eigendecomposition}
   \mathbb{P}_2 =  \mathbb{AP}_1 \oplus\mathbb{ P}_1.
\end{equation}
Therefore, $ \mathbb{P}_1 $ is the eigenspace corresponding to eigenvalue $a+b$, and  $ \mathbb{AP}_1 $ is the eigenspace corresponding to eigenvalue $a-b$.
\end{proof}

\begin{remark}
  The direct sum decomposition (\ref{eq:eigendecomposition}) is in fact an eigen-decomposition.
\end{remark}

\begin{definition}
  Let $L =aI+aE \in LC_2 $.  The\emph{ spectrum }of the operator $L$, denoted by $\sigma(L) $, is the set of all eigenvalues of $L$. The \emph{determinant} of $L$, denoted by $\det (L) $, is the product of all the eigenvalues of $L$. The \emph{trace} of $L$, denoted by $\Tr(L) $, is the sum of all the eigenvalues of $L$.
 \end{definition}

\begin{theorem}
Let $aI+bE \in LC_2 $ be an operator on $\mathbb{P}_2 $. Then we have the following results:
  \begin{equation}\label{eq:spectrum}
   \sigma(L)=\{ a-b,\, a+b\}.
 \end{equation}
\begin{equation}\label{eq:trace}
   \Tr (L)= \sum_{\lambda \in \sigma(L)}\lambda = 2a
\end{equation}

\begin{equation}\label{eq:determinant}
   \det (L) = \prod_{ \lambda \in \sigma(L)} \lambda =a^2-b^2
\end{equation}
 \end{theorem}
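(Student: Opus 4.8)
The plan is to obtain all three formulas directly from the two theorems immediately preceding, together with the definitions of spectrum, trace, and determinant just recalled. The eigenvalue theorem already shows that the only scalars $\lambda$ for which $(aI+bE)f=\lambda f$ has a nonzero solution $f\in\mathbb{P}_2$ are $\lambda_1=a+b$ and $\lambda_2=a-b$. Since the spectrum $\sigma(L)$ is by definition the set of all eigenvalues of $L$, this immediately gives $\sigma(L)=\{a-b,\,a+b\}$, which is (\ref{eq:spectrum}). The remaining two claims are then pure arithmetic on this set: evaluate $\Tr(L)=\sum_{\lambda\in\sigma(L)}\lambda=(a+b)+(a-b)=2a$ to get (\ref{eq:trace}), and $\det(L)=\prod_{\lambda\in\sigma(L)}\lambda=(a+b)(a-b)=a^2-b^2$ to get (\ref{eq:determinant}).

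To make the summing and multiplying over the two eigenvalues legitimate — that is, to be sure the decomposition of $\mathbb{P}_2$ into eigenspaces is exhausted with no over- or under-counting — I would invoke the eigenspace theorem and the decomposition $\mathbb{P}_2=\mathbb{AP}_1\oplus\mathbb{P}_1$ from \cite{HBY}: the $\lambda_1$-eigenspace is exactly $\mathbb{P}_1$, the $\lambda_2$-eigenspace is exactly $\mathbb{AP}_1$, and these two summands together fill all of $\mathbb{P}_2$. Hence $L$ acts as multiplication by $a+b$ on one summand and by $a-b$ on the other, so its trace and determinant — read off from this two-summand decomposition, exactly mirroring the associated $2\times 2$ circulant matrix $\begin{pmatrix} a & b \\ b & a\end{pmatrix}$ — are $2a$ and $a^2-b^2$ respectively.

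I do not expect a genuine obstacle here; the only point worth flagging is the degenerate case $b=0$, where $\sigma(L)=\{a\}$ is a singleton and the naive set-theoretic sum/product over $\sigma(L)$ would yield $a$ and $a$ rather than $2a$ and $a^2$. This is resolved by reading the trace and determinant with the multiplicity furnished by the fixed decomposition $\mathbb{P}_2=\mathbb{AP}_1\oplus\mathbb{P}_1$ (equivalently, by the isomorphism with the ring of $2\times 2$ real circulant matrices established earlier), so that the eigenvalue $a$ is counted on each summand; with that convention (\ref{eq:trace}) and (\ref{eq:determinant}) hold uniformly in $a,b$. I would close by remarking that $2a$ and $a^2-b^2$ agree with the trace and determinant of the circulant matrix attached to $L$, which is precisely the consistency check the section is after.
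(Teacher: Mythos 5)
Your proposal is correct and follows essentially the same route as the paper, which states this theorem without a separate proof because it is an immediate consequence of the preceding eigenvalue and eigenspace theorems together with the definitions of spectrum, trace, and determinant. Your remark about counting the eigenvalue with multiplicity when $b=0$ (via the decomposition $\mathbb{P}_2=\mathbb{AP}_1\oplus\mathbb{P}_1$) is exactly the point the paper makes in its remark that the sum and product formulas take repeated eigenvalues into account.
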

\begin{remark}
Note that both the sum formula (\ref{eq:trace}) and the product formula (\ref{eq:determinant}) take into account the multiplicities of repeated eigenvalues.
\end{remark}

Throughout this paper, we use the following notations for the indicated subsets of $LC_2$
\begin{equation}\label{eq:LC2star}
 LC^*_2 :=\{L= xI+yE \in LC_2: x^2-y^2 \neq 0  \} ,
\end{equation}
\begin{equation}\label{eq:H}
 \mathcal{H}:=\{L= xI+yE \in LC_2: x^2-y^2 > 0  \},
\end{equation}
\begin{equation}\label{eq:V}
  \mathcal{V}:=\{L= xI+yE \in LC_2: x^2-y^2 < 0  \},
\end{equation}
\begin{equation}\label{eq:U}
  \mathcal{U}:=\{L= xI+yE \in LC_2: x^2-y^2 = 1  \},
\end{equation}
\begin{equation}\label{eq:N}
  \mathcal{N}:=\{L= xI+yE \in LC_2: x^2-y^2 = 0  \}.
\end{equation}
\begin{equation}\label{eq:E}
  \mathcal{E}: = \{ xI+yE \in \mathcal{H }: x > 0 \}.
\end{equation}

 \begin{theorem}\label{eq:subgrouptheorem}
Then we have the following results:
\begin{itemize}
 \item    Each of $LC^*_2 $,  $\mathcal{H}$, $ \mathcal{U} $, $ \mathcal{E} $  is a group under multiplication, and  $ \mathcal{U} \lhd \mathcal{H} \lhd LC^*_2 $,

  \item $ \mathcal{V} =  \{E L: L \in \mathcal{H} \},\quad  \mathcal{H} =  \{E M: M \in \mathcal{V} \}$,

  \item  For any $ L \in LC_2 $, for any $ N \in \mathcal{N} $, $ LN = NL \in \mathcal{N}$.
  \end{itemize}
\end{theorem}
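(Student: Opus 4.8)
The plan is to put the determinant map at the centre of everything. The first step I would take is to record its multiplicativity: for $L=x_1I+y_1E$ and $M=x_2I+y_2E$, formula (\ref{eq:operatormultiplication}) together with the elementary identity
\[
(x_1x_2+y_1y_2)^2-(x_1y_2+x_2y_1)^2=(x_1^2-y_1^2)(x_2^2-y_2^2)
\]
gives $\det(LM)=\det(L)\det(M)$. Since $(LC_2,+,\cdot)$ is a commutative associative algebra with unity $I$ and $\det(I)=1$, and since by Theorem \ref{eq:invertiblitytheorem} an element lies in $LC^*_2$ exactly when its determinant is nonzero (with $\det(L^{-1})=\det(L)^{-1}$, read off from (\ref{eq:Linverse}) or from multiplicativity), it follows that $LC^*_2$ is a group under multiplication and that $\det$ restricts to a group homomorphism $LC^*_2\to\mathbb{R}\setminus\{0\}$.

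Given that, the first bullet is mostly formal. I would identify $\mathcal{H}$ as the $\det$-preimage of the subgroup $\mathbb{R}_{>0}\leq(\mathbb{R}\setminus\{0\},\cdot)$, hence a subgroup of $LC^*_2$, and $\mathcal{U}=\ker(\det)$, hence a normal subgroup. Since $LC_2$ is commutative, normality of $\mathcal{U}$ in $\mathcal{H}$ and of $\mathcal{H}$ in $LC^*_2$ is in any case automatic, which settles the chain $\mathcal{U}\lhd\mathcal{H}\lhd LC^*_2$. The set $\mathcal{E}$ needs one observation beyond the determinant: if $xI+yE\in\mathcal{H}$ and $x>0$ then $x>|y|$, so for $L_i=x_iI+y_iE\in\mathcal{E}$ the identity component of $L_1L_2$ obeys $x_1x_2+y_1y_2\geq x_1x_2-|y_1||y_2|>0$ while $\det(L_1L_2)=\det(L_1)\det(L_2)>0$; and by (\ref{eq:Linverse}) the identity component of $L^{-1}$ is $x/(x^2-y^2)>0$. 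Hence $\mathcal{E}$ is a subgroup of $\mathcal{H}$ (it contains $I$).

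For the second bullet I would use the single computation $E(xI+yE)=yI+xE$ coming from (\ref{eq:operatormultiplication}): left multiplication by $E$ interchanges the two components, so $\det(EL)=-\det(L)$. Therefore $L\in\mathcal{H}\iff EL\in\mathcal{V}$, and since $E^2=I$ every $M\in\mathcal{V}$ equals $E(EM)$ with $EM\in\mathcal{H}$; this gives $\mathcal{V}=\{EL:L\in\mathcal{H}\}$, and applying $E$ once more (again using $E^2=I$) gives $\mathcal{H}=\{EM:M\in\mathcal{V}\}$. The third bullet is immediate: $LN=NL$ is commutativity of $LC_2$, and $\det(LN)=\det(L)\det(N)=0$ because $N\in\mathcal{N}$, so $LN\in\mathcal{N}$.

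I do not expect a genuine obstacle here; the whole argument is driven by multiplicativity of $\det$ and by commutativity. The only place that requires a little care is the closure of $\mathcal{E}$ under multiplication and inversion — there tracking the determinant alone is not enough, one must also check that the identity component stays strictly positive — and, in the same spirit, I would be explicit that $\mathcal{V}$ and $\mathcal{N}$ are only closed under the stated \emph{external} operations (left multiplication by $E$, respectively multiplication by arbitrary elements of $LC_2$) and are not themselves subgroups.
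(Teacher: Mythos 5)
Your proposal is correct and takes essentially the same route as the paper: everything is driven by the multiplicativity identity $\det(L_1L_2)=\det(L_1)\det(L_2)$, with the separate check that the identity component stays positive for $\mathcal{E}$. In fact you make explicit several points the paper's proof leaves implicit --- the normality chain via commutativity, the inverse check for $\mathcal{E}$, and the second and third bullets via $E(xI+yE)=yI+xE$ (so $\det(EL)=-\det(L)$) and $\det(LN)=0$ --- so nothing needs fixing.
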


\begin{proof}
The proofs follow from the multiplicative property of determinants. If $ L_1= x_1I+y_1E, L_2 = x_2I+y_2E $, then
\begin{align*}
  \det ( L_1L_2)&= \det( (x_1x_2+y_1y_2)I+(x_1y_2+x_2y_1)E )\\
   &= (x_1x_2+y_1y_2)^2 -(x_1y_2+x_2y_1)^2 \\
   & = (x_1^2x_2^2+2x_1y_1x_2y_2+y_1^2y_2^2)-(x_1^2y_2^2+2x_1y_1x_2y_2+x_2^2y_1^2) \\
   & =x_1^2(x_2^2-y_2^2)+y_1^2(y_2^2-x_2^2)\\
   &=(x_1^2-y_1^2)(x_2^2-y_2^2)\\
  & =\det L_1 \det L_2
\end{align*}

\begin{itemize}
\item The identity $I$ belongs to all $ LC^*_2, \, \mathcal{H}, \, \mathcal{U}$, and $\mathcal{E}$.
\item If $ L_1, L_2 \in \mathcal{H}$, then $ \det L_1 >0, \det L_2 >0 $. Consequently, $ \det (L_1L_2)= \det L_1 \det L_2 >0 $. This shows that $ L_1L_2 \in  \mathcal{H} $. Also $\det (L_1^{-1}) =1 / \det (L_1) >0 $. Therefore, $L_1^ {-1} \in \mathcal{H} $.

 \item If $ L_1, L_2 \in LC^*_2 $, then $ \det L_1 \neq 0 , \det L_2 \neq 0 $. Consequently, $ \det (L_1L_2)= \det L_1 \det L_2 \neq 0 $. This shows that $ L_1L_2 \in  \in LC^*_2 $. Also $\det (L_1^{-1}) =1 / \det (L_1) \neq 0   $. Therefore, $L_1^ {-1} \in LC^*_2 $.

 \item If $ L_1, L_2 \in \mathcal{U}$, then $ \det L_1 =1  , \det L_2 =1 $. Consequently, $ \det (L_1L_2)= \det L_1 \det L_2 =1 $. This shows that $ L_1L_2 \in  \mathcal{U} $. Also $\det (L_1^{-1}) =1 / \det (L_1)$. Therefore, $L_1^ {-1} \in \mathcal{U}$.

\item If  $ L = x I+y \in \mathcal{E} $, then  by (\ref{eq:H}) and (\ref{eq:E}),  $ x^2-y^2 > 0, x> 0 $. This is true if and only if $ x > |y| $.  Now for any $L_1= x_1I + y_1 E, L_2= x_2 I + y_2 E \in \mathcal{E} $, since $x_1> |y_1|  $, and $x_2 > |y_2| $, we have $ x_1x_2 > |y_1||y_2| \geq -y_1y_2 $. So, $x_1x_2+y_1y_2 > 0 $. This implies that $L_1L_2=(x_1x_2+y_1y_2)I + (x_1y_2+x_2y_1)E \in  \mathcal{E} $.
\end{itemize}
\end{proof}

\begin{theorem}
  \begin{equation}\label{eq:etotheL}
   \mathcal{E}= \{ e^L: L \in LC_2\}
  \end{equation}
\end{theorem}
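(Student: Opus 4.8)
The plan is to prove the two inclusions separately, using the closed form $e^{xI+yE}= e^{x}\cosh(y) I + e^{x}\sinh(y) E$ established in (\ref{eq:exponentialform}) together with the logarithm formula (\ref{eq:logartiminhyperbolic}).

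First I would verify $\{e^L: L\in LC_2\}\subseteq \mathcal{E}$. Take an arbitrary $L=xI+yE\in LC_2$. By (\ref{eq:exponentialform}), $e^L=e^{x}\cosh(y)\,I+e^{x}\sinh(y)\,E$, so its determinant is $(e^{x}\cosh y)^2-(e^{x}\sinh y)^2=e^{2x}(\cosh^2 y-\sinh^2 y)=e^{2x}>0$, which puts $e^L$ in $\mathcal{H}$ by (\ref{eq:H}). Moreover, its identity component $e^{x}\cosh y$ is strictly positive because $e^{x}>0$ and $\cosh y\geq 1$; hence $e^L\in\mathcal{E}$ by (\ref{eq:E}).

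Next I would establish the reverse inclusion $\mathcal{E}\subseteq\{e^L: L\in LC_2\}$. Fix $aI+bE\in\mathcal{E}$, so $a^2-b^2>0$ and $a>0$, equivalently $a>|b|$; this is precisely the domain of the logarithm in (\ref{eq:logartiminhyperbolic}). Set $x=\tfrac12\ln(a^2-b^2)$ and $y=\atanh(b/a)$ and let $L:=xI+yE\in LC_2$. Then $e^{x}=\sqrt{a^2-b^2}$, and from the identities $\cosh(\atanh t)=1/\sqrt{1-t^2}$ and $\sinh(\atanh t)=t/\sqrt{1-t^2}$ applied with $t=b/a$ (valid since $|b/a|<1$) together with $a>0$, one gets $\cosh y=a/\sqrt{a^2-b^2}$ and $\sinh y=b/\sqrt{a^2-b^2}$. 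Substituting into (\ref{eq:exponentialform}) yields $e^L=e^{x}\cosh y\,I+e^{x}\sinh y\,E=aI+bE$, as required. Combining the two inclusions gives the claimed equality.

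I do not expect a genuine obstacle here; the one point demanding care is the bookkeeping of the strict inequality $a>|b|$, which is exactly what makes $b/a\in(-1,1)$ so that $\atanh(b/a)$ is defined, and which matches the condition $x>|y|$ identified at the end of the proof of Theorem on the logarithm as the image of $\ln$. Equivalently, the whole statement can be read as: $\exp$ and $\ln$ are mutually inverse bijections between $LC_2$ and $\mathcal{E}$, and the computation above is precisely that verification.
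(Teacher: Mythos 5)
Your proposal is correct and follows essentially the same route as the paper: the forward inclusion via the closed form $e^{xI+yE}=e^{x}\cosh(y)I+e^{x}\sinh(y)E$, and the reverse inclusion via the logarithm formula (\ref{eq:logartiminhyperbolic}) on the domain $a>|b|$. The only difference is that you verify $e^{\ln M}=M$ explicitly through the hyperbolic identities, whereas the paper simply invokes it; the mathematical content is the same.
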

\begin{proof}
If $L=xI+yE \in LC_2 $ then $ e^L= e^x\cosh(y) I + e^x \sinh(y) E \in \mathcal{E }  $. Conversely, if $ M \in \mathcal{E} $ then  by (\ref{eq:logartiminhyperbolic}) $\ln (M) \in LC_2 $ and  $ M = e ^{\ln (M)} \in \{e^{L}: L \in LC_2 \} $.
\end{proof}

\subsection{Isomorphism between the ring of operators $LC_2$ and the ring $CM_2$ of circulant matrices }
\begin{definition}
  The set $ CM_2 $  of all $ 2 \times 2 $ real circulant matrices is defined as
\begin{equation}\label{eq:2by2circulant}
     CM_2= \left\{ C=\begin{pmatrix}
   a & b   \\
   b & a
 \end{pmatrix},\, a,b \in \mathbb{R} \right\}.
   \end{equation}
  \end{definition}
 The spectrum, the trace, and the determinant of the operator $L=aI+bE  \in LC_2 $ are respectively the same as the spectrum, the trace, and the determinant of the corresponding circulant matrix $ \begin{pmatrix}
   a & b   \\
   b & a
 \end{pmatrix} \in CM_2 $. Also, the condition of invertibility of $L$  stated in Theorem \ref{eq:invertiblitytheorem}  is the same as the condition for the invertibility of a $ 2 \times 2 $ circulant matrix $ C \in CM_2$ given in (\ref{eq:2by2circulant}). That is the determinant $ \det (C)= a^2-b^2 \neq 0 $
Let us denote
\begin{equation}\label{eq:cm2star}
  CM_2^* := \{C \in CM_2,\quad \det(C)\neq 0  \},
\end{equation}

\begin{equation}\label{eq:cm2plus}
  CM_2^+ := \{C \in CM_2,\quad \det(C)>0  \},
\end{equation}

\begin{equation}\label{eq:cm2plusplus}
  CM_2^{++}:= \{C \in CM_2^{+},\quad  a >0 \}.
\end{equation}

\begin{equation}\label{eq:cm2one}
 CM_2^1 := \{C \in CM_2, \quad\det(C)=1 \} ,
\end{equation}

\begin{equation}\label{eq:cm2zero}
CM_2^0:= \{C \in CM_2,\quad \det(C)=0  \},
\end{equation}

\begin{equation}\label{eq:cm2minus}
  CM_2^-  := \{C \in CM_2,\quad \det(C)<0  \},
\end{equation}

\begin{theorem}
  $CM_2 $ is a ring under  the usual matrix multiplication and matrix addition, and that we have the following ring isomorphism:
  \begin{equation}\label{eq:ringisomorphism}
    (CM_2, +,.) \cong (LC_2,+,.)
  \end{equation}

\end{theorem}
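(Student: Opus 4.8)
The plan is to exhibit the obvious coordinate map and check that it intertwines both ring operations. Define $\phi \colon LC_2 \to CM_2$ by
\[
\phi(xI + yE) = \begin{pmatrix} x & y \\ y & x \end{pmatrix}.
\]
First I would confirm that $CM_2$ is genuinely a ring, by realizing it as a subring of the matrix ring $M_2(\mathbb{R})$: it is nonempty and contains the identity matrix (take $a=1$, $b=0$), it is closed under subtraction since the circulant pattern is preserved entrywise, and a direct product
\[
\begin{pmatrix} a & b \\ b & a \end{pmatrix}\begin{pmatrix} c & d \\ d & c \end{pmatrix} = \begin{pmatrix} ac+bd & ad+bc \\ ad+bc & ac+bd \end{pmatrix}
\]
is again circulant, so $CM_2$ is closed under multiplication; associativity and distributivity are inherited from $M_2(\mathbb{R})$.

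Next I would verify that $\phi$ is a bijection. Surjectivity is immediate from the definition of $CM_2$ in (\ref{eq:2by2circulant}). For injectivity, the operators $I$ and $E$ on $\mathbb{P}_2$ are linearly independent — if $xI + yE$ were the zero operator, then applying the eigenvalue description (the restriction to $\mathbb{P}_1$ has eigenvalue $x+y$ and to $\mathbb{AP}_1$ eigenvalue $x-y$) forces $x+y = x-y = 0$, hence $x = y = 0$ — so $\{I,E\}$ is a basis of $LC_2$, the representation $L = xI+yE$ is unique, and $\phi$ is well defined and one-to-one.

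Then comes the core step: $\phi$ respects both operations. Additivity is read off directly from (\ref{eq:vectoraddition}). For multiplication, (\ref{eq:operatormultiplication}) gives $L_1L_2 = (x_1x_2 + y_1y_2)I + (x_1y_2 + x_2y_1)E$, hence
\[
\phi(L_1L_2) = \begin{pmatrix} x_1x_2+y_1y_2 & x_1y_2+x_2y_1 \\ x_1y_2+x_2y_1 & x_1x_2+y_1y_2 \end{pmatrix} = \phi(L_1)\,\phi(L_2),
\]
the last equality being precisely the circulant product displayed above with $(a,b,c,d) = (x_1,y_1,x_2,y_2)$. Since $\phi(I)$ is the identity matrix, $\phi$ is a unital ring homomorphism, and being bijective it is a ring isomorphism, which is (\ref{eq:ringisomorphism}).

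I do not anticipate a real obstacle here; the only point requiring a moment's care is bijectivity — specifically, that $\{I,E\}$ is a basis of $LC_2$ so that $\phi$ is well defined — and once that is secured the homomorphism property is exactly the one-line computation above. As a sanity check it is worth remarking that $E$ corresponds under $\phi$ to $\begin{pmatrix} 0 & 1 \\ 1 & 0 \end{pmatrix}$, whose square is the identity matrix, matching the defining relation $E^2 = I$.
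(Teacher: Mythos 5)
Your proof is correct, and it takes the approach the paper clearly intends: the paper states this theorem without any proof, relying on the evident correspondence $xI+yE \leftrightarrow \begin{pmatrix} x & y \\ y & x \end{pmatrix}$, which is exactly the map $\phi$ you construct and verify. Your one point of extra care --- establishing that $\{I,E\}$ is linearly independent as operators on $\mathbb{P}_2$ (via the eigenvalues $x+y$ on $\mathbb{P}_1$ and $x-y$ on $\mathbb{AP}_1$), so that the coordinates $(x,y)$ of $L$ are well defined and $\phi$ is injective --- is a genuine gap-filler that the paper silently assumes, and the rest (subring check inside $M_2(\mathbb{R})$, additivity from the paper's addition formula, multiplicativity matching the circulant product) is the expected one-line computation.
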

    \begin{theorem}
$CM_2^*$ form a group under matrix multiplication, and each of $  CM_2^+ $,  $ CM_2^{++}$ , $CM_2^1$ form a subgroup of $CM_2^*$ under matrix multiplication.
    \end{theorem}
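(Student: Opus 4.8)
The plan is to transport the group structure along the ring isomorphism of the previous theorem. Write $\Phi : (LC_2,+,\cdot) \to (CM_2,+,\cdot)$ for the map sending $L = aI+bE$ to $C = \begin{pmatrix} a & b \\ b & a \end{pmatrix}$. First I would record the two facts already noted just before the statement: $\Phi$ is a bijective ring homomorphism, and $\det\bigl(\Phi(L)\bigr) = a^2 - b^2 = \det(L)$ for every $L = aI+bE \in LC_2$. Since $\Phi$ respects the multiplicative structure, it maps any subset of $LC_2$ that is a group under operator multiplication onto a subset of $CM_2$ that is a group under matrix multiplication, and it maps subgroups to subgroups; associativity on the $CM_2$ side is in any case automatic, being inherited from ordinary matrix multiplication, and $\Phi(I) = I_2$ is the identity of $CM_2$.

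Next I would match the defining conditions. Because $\det\Phi(L) = x^2-y^2$ for $L = xI+yE$, the map $\Phi$ restricts to bijections $LC_2^* \to CM_2^*$, $\mathcal{H} \to CM_2^+$, $\mathcal{E} \to CM_2^{++}$, and $\mathcal{U} \to CM_2^1$: indeed $x^2-y^2 \neq 0$ corresponds to $\det C \neq 0$, $x^2-y^2 > 0$ to $\det C > 0$, the pair $x^2-y^2>0,\ x>0$ to the pair $\det C > 0,\ a>0$, and $x^2-y^2 = 1$ to $\det C = 1$. By Theorem~\ref{eq:subgrouptheorem} each of $LC_2^*$, $\mathcal{H}$, $\mathcal{U}$, $\mathcal{E}$ is a group under multiplication of operators, with $\mathcal{U} \lhd \mathcal{H} \lhd LC_2^*$. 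Applying $\Phi$ yields immediately that $CM_2^* = \Phi(LC_2^*)$ is a group under matrix multiplication and that $CM_2^+ = \Phi(\mathcal{H})$, $CM_2^{++} = \Phi(\mathcal{E})$, $CM_2^1 = \Phi(\mathcal{U})$ are subgroups of it, which is exactly the assertion.

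For readers who prefer a self-contained argument I would add the direct verification: the product of two $2\times 2$ circulant matrices is circulant, and by the same expansion used in the proof of Theorem~\ref{eq:subgrouptheorem} its determinant is the product of the two determinants, so nonvanishing, positivity, or the value $1$ of the determinant is preserved under multiplication; the $2\times 2$ identity lies in each of $CM_2^*$, $CM_2^+$, $CM_2^{++}$, $CM_2^1$; and the inverse of an invertible circulant matrix is again circulant, corresponding under $\Phi$ to the operator $\tfrac{x}{x^2-y^2}I - \tfrac{y}{x^2-y^2}E$ of formula~\eqref{eq:Linverse}, with reciprocal determinant, hence again in the appropriate set. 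I expect the only genuinely non-mechanical point to be closure of $CM_2^{++}$, which needs the inequality argument of the $\mathcal{E}$ case of Theorem~\ref{eq:subgrouptheorem}: if $C_i$ has top-left entry $a_i > 0$ and $\det C_i = a_i^2 - b_i^2 > 0$, then $a_i > |b_i|$, so the top-left entry of $C_1 C_2$ is $a_1 a_2 + b_1 b_2 \geq a_1 a_2 - |b_1||b_2| > 0$; and an inverse stays in $CM_2^{++}$ since $\tfrac{x}{x^2-y^2} > 0$ whenever $x > |y|$. Everything else is a transcription of Theorem~\ref{eq:subgrouptheorem} through the isomorphism.
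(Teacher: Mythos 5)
Your proposal is correct; the paper in fact states this theorem without proof, and your argument is precisely the one its surrounding remarks indicate: transport the group structure along the ring isomorphism $LC_2 \cong CM_2$, using that $\det\bigl(\Phi(xI+yE)\bigr)=x^2-y^2$ matches the determinant of the operator, so that $LC_2^*,\ \mathcal{H},\ \mathcal{E},\ \mathcal{U}$ of Theorem~\ref{eq:subgrouptheorem} correspond exactly to $CM_2^*,\ CM_2^+,\ CM_2^{++},\ CM_2^1$. Your supplementary direct verification (multiplicativity of the determinant for circulant matrices, the inequality $a_1a_2+b_1b_2>0$ for closure of $CM_2^{++}$, and the explicit circulant inverse from~\eqref{eq:Linverse}) is likewise sound and mirrors the computations in the paper's proof of Theorem~\ref{eq:subgrouptheorem}.
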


 \begin{theorem}
  We have the following group isomorphism with respect to multiplication:
    \begin{equation}\label{eq:isomorphism}
     ( CM_2^*,.)  \cong (LC_2 ^*,.), \quad (CM_2^1,.)  \cong (\mathcal{U},.),\quad  (CM_2^+,.)  \cong (\mathcal{H},.), (\quad CM_2^{++}, .) \cong  (\mathcal{E},.).
    \end{equation}
\end{theorem}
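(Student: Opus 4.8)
The plan is to leverage the ring isomorphism $\phi\colon (LC_2,+,\cdot)\to(CM_2,+,\cdot)$ already established, which sends $L=xI+yE$ to the circulant matrix $\begin{pmatrix} x & y \\ y & x \end{pmatrix}$, and to show that it restricts to group isomorphisms on the four indicated pairs of multiplicative subgroups. Since $\phi$ is a bijection that is multiplicative and sends $I$ to the identity matrix, the only real work is to verify that $\phi$ carries each subset on the left \emph{exactly onto} the corresponding subset on the right; once that is done, the restriction of $\phi$ to each of the four pairs is automatically a bijective group homomorphism, hence an isomorphism. The groups themselves are already available: $LC_2^*$, $\mathcal{U}$, $\mathcal{H}$, $\mathcal{E}$ are groups under operator multiplication by Theorem~\ref{eq:subgrouptheorem}, and $CM_2^*$, $CM_2^1$, $CM_2^+$, $CM_2^{++}$ are groups under matrix multiplication by the preceding theorem.

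The key invariance I would record first is that $\phi$ preserves both the determinant and the identity/diagonal component. Indeed, for $L=xI+yE$ we have $\det(L)=x^2-y^2$ by (\ref{eq:determinant}), while $\det\begin{pmatrix} x & y \\ y & x \end{pmatrix}=x^2-y^2$ as well, so $\det(\phi(L))=\det(L)$ for all $L\in LC_2$; moreover the number $x$ in $L=xI+yE$ is exactly the common diagonal entry of $\phi(L)$. From these two facts the four correspondences read off immediately: $L\in LC_2^*\iff \det L\neq 0\iff \det\phi(L)\neq 0\iff \phi(L)\in CM_2^*$, and the same chain with the condition $\det=1$ gives $\mathcal{U}\leftrightarrow CM_2^1$, with $\det>0$ gives $\mathcal{H}\leftrightarrow CM_2^+$, and with ``$\det>0$ and diagonal entry $>0$'' gives $\mathcal{E}\leftrightarrow CM_2^{++}$. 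Hence $\phi$ restricts to bijections between each of the four pairs, and since $\phi(L_1L_2)=\phi(L_1)\phi(L_2)$ these restrictions are group isomorphisms, which is precisely (\ref{eq:isomorphism}).

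There is no genuine obstacle here — the statement is in essence a corollary of the ring isomorphism together with the determinant formula (\ref{eq:determinant}) — but the point deserving a little care is confirming that the subset conditions are two-sided, i.e. that $\phi^{-1}$ also respects them, so that each restriction is onto and not merely into. This is automatic because $\phi$ is a bijection and the defining conditions ($\det\neq 0$, $\det=1$, $\det>0$, diagonal $>0$) are phrased symmetrically on both sides in terms of quantities — the determinant and the identity/diagonal component — that $\phi$ matches exactly; in particular no separate computation with $\phi^{-1}$ is required.
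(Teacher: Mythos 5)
Your proposal is correct, and it follows the route the paper itself intends: the paper states this theorem without any proof, treating it as an immediate consequence of the ring isomorphism $(CM_2,+,.)\cong(LC_2,+,.)$ and the determinant formula (\ref{eq:determinant}), which is exactly the argument you spell out. Your explicit observation that the isomorphism $xI+yE\mapsto\begin{pmatrix} x & y\\ y & x\end{pmatrix}$ preserves the determinant and matches the identity component with the diagonal entry, so that each of the four defining conditions ($\det\neq 0$, $\det=1$, $\det>0$, and additionally $x>0$) is carried exactly onto the corresponding condition in $CM_2$, supplies precisely the details the paper leaves implicit.
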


\subsection{Norms can be defined on $LC_2$}
 We have seen that $LC_2 $ is a vector space over $\mathbb{R}$, with the given vector addition (\ref{eq:vectoraddition}) and scalar multiplication (\ref{eq:scalarmultiplication}). We may define a norm on $ LC_2 $ to make it a normed vector space. For example, we may define the norms:
 \begin{equation}\label{eq:L1norm}
  \|.\|_1:  LC_2 \rightarrow [0,\infty),\quad \|xI+yE\|_1:=|x|+|y|,
 \end{equation}
 \begin{equation}\label{eq:L2norm}
 \|.\|_2: LC_2 \rightarrow [0,\infty),\quad  \|xI+yE\|_2:=\sqrt{x^2+y^2},
 \end{equation}
 \begin{equation}\label{eq:L2norm}
 \|.\|_\infty: LC_2 \rightarrow [0,\infty),\quad  \|xI+yE\|_\infty:=\max   \{|x|,  |y| \}.
 \end{equation}
Defining a norm on $LC_2$ is helpful to study some more properties of  $LC_2$ functions, like continuity of functions, convergence of sequences,etc.. For example, for $L= xI+yE $,
 \begin{equation}\label{eq:finitegeometric}
     I +L+L^2 +L^3+...+L^n  = \frac{I-L^{n+1}}{I-L}, \quad \|L\|_1 < 1,
 \end{equation}
and the infinite geometric series
 \begin{equation}\label{eq:infinitegeometric}
    I +L+L^2 +L^3+...  = \frac{I}{I-L}, \quad \|L\|_1 < 1 .
 \end{equation}
  Equations(\ref{eq:finitegeometric}) and (\ref{eq:infinitegeometric}) are meaningful as $I-L $ is invertible and its inverse is in $LC_2$ when $\|L\|_1 < 1  $.
 \begin{remark}
 The  results for the finite geometric series (\ref{eq:finitegeometric})  and the infinite geometric series (\ref{eq:infinitegeometric}) are not valid for the norms $\|.\|_2$ and $ \|.\|_\infty $.
 \end{remark}
In the subsequent discussions, we use the norm $\|.\|_2$ unless specified, and just denote as $|.|$ for simplicity.

\section{A new characterization of the complex  field $\mathbb{C}$ }
In this section, we introduce a new characterization of the field of complex numbers as the set of some class of operators defined on spaces of all antiperiodic functions of antiperiod 2. We first list some of the commonest characterizations of the field of complex numbers commonly used.
\subsection{Rectangular coordinate form of complex numbers}
\begin{definition}
The field of complex numbers is  defined naturally as
\begin{equation}\label{eq:complexusual}
    \mathbb{C} = \{ x+iy: x,y \in \mathbb{R },\quad i^2= -1 \}.
\end{equation}
This is the rectangular coordinate form of the complex numbers.
\end{definition}

\subsection{Polar coordinate form of complex numbers}
In the polar form of the complex number we write
\begin{equation}\label{eq:polarform}
 z= x +iy = \rho e^{i\theta},\, \rho= \sqrt{x^2+y^2},\, \theta = \arg(z)
\end{equation}

\begin{definition}
The \emph{principal argument} of a complex number $z \neq 0 $, denoted by $\arg z $, is a unique angle $\theta $, where  $ - \pi \leq \theta < \pi $  is measured from the positive $x$-axis to the vector $\vec{oz}$.
\end{definition}

\begin{equation}\label{eq:principalargument}
   \arg(z)=
\begin{cases}
  \arctan(y/x) , & \mbox{ if } x >0 \\
  \arctan(y/x)+ \pi, & \mbox{if } x< 0, y > 0, \\
 \arctan(y/x)- \pi & \mbox{if } x < 0, y \leq 0 \\
  \pi /2  & \mbox{if } x=0,y>0 \\
   -\pi /2 & \mbox{ if } x=0,y <0, \\
   \text{undefined} & \mbox{ if } x=0,y=0.
\end{cases}
\end{equation}

\subsection{The complex field is isomorphic to  a field  of $2\times 2 $ matrices}

It is shown that $LC_2$ is isomorphic to the ring $CM_2 $ of all $2 \times 2 $ circulant matrices. $CM_2$ is a ring and  not a field, as there are infinitely many nonzero non-invertible elements, unlike a field where the only non invertible element is the zero element. Consider the space of all $2 \times 2 $  matrices of the form
\begin{equation}\label{eq:2by2antisymmetric}
    \mathbb{AS}_2:= \left\{ \begin{pmatrix}
   x & -y   \\
   y & x
 \end{pmatrix},\, x,y \in \mathbb{R} \right\} .
   \end{equation}
 The matrix field   $\mathbb{AS}_2$ is a isomorphic to the field of complex numbers $\mathbb{C}$. See, for example, \cite{Et}, \cite{RR}.

\subsection{A new characterization of the complex field $\mathbb{C}$ as set of operators on some space}

\begin{definition}
In an analogous manner to the definition of $LC_2 $ defined in (\ref{eq:LC2}), we may define the field $\mathbb{C}$ of complex numbers  as:
  \begin{equation}\label{eq:complexoperatorform}
    \mathbb{C} = \{ xI+ yE: x,y\in \mathbb{R }, E^2= -I \},
\end{equation}
where $I$ is the identity operator, and $E$ is the shift operator, both are considered to be acting on the space $ \mathbb{AP}_2 $ of all antiperiodic functions of antiperiod equal to $2$,  whereas for  the case of $LC_2$, the operators $E$ and $I$ are assumed to operate on  the space $\mathbb{P}_2$ of all periodic functions of period equal to $2$.
\end{definition}

\begin{theorem}
  The set complex numbers  vector space of operators (\ref{eq:complexoperatorform}) is a field.
\end{theorem}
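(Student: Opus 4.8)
The plan is to reduce the claim to two statements: that the operator set in \eqref{eq:complexoperatorform} is a commutative ring with unity $I$, and that every one of its nonzero elements admits a multiplicative inverse inside the set. The ring axioms are obtained by transcribing the development of Section 2 for $LC_2$, the sole change being that $E^{2}=I$ is replaced by $E^{2}=-I$; this relation is forced here because for $f\in\mathbb{AP}_2$ one has $E^{2}f(x)=f(x+2)=-f(x)$. With it the multiplication reads
\[
(x_1 I+y_1 E)(x_2 I+y_2 E)=(x_1x_2-y_1y_2)I+(x_1y_2+x_2y_1)E,
\]
and commutativity, associativity and the distributive laws follow from those of $I$ and $E$ word for word as in the proof that $LC_2$ is a commutative algebra with unity; the additive abelian group structure is immediate from \eqref{eq:vectoraddition}.

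Before speaking of ``the nonzero elements'' one must check that the coordinates $x,y$ are well defined, i.e.\ that $I$ and $E$ are linearly independent as operators on $\mathbb{AP}_2$. I would argue as follows: if $\alpha I+\beta E=0$ then $\alpha f(x)+\beta f(x+1)=0$ for every $f\in\mathbb{AP}_2$ and every $x\in\mathbb{R}$; applying $E$ and using $E^{2}=-I$ gives $\alpha f(x+1)-\beta f(x)=0$. Treating these two equations as a linear system in $\bigl(f(x),f(x+1)\bigr)$ with determinant $\alpha^{2}+\beta^{2}$, were $\alpha^{2}+\beta^{2}\neq 0$ we would conclude $f\equiv 0$ for all antiperiodic $f$, contradicting $\cos(\pi x)\in\mathbb{AP}_2\setminus\{0\}$. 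Hence $\alpha=\beta=0$, so a nonzero element of \eqref{eq:complexoperatorform} is precisely one with $(x,y)\neq(0,0)$.

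For invertibility, given $L=xI+yE$ with $(x,y)\neq(0,0)$ I look for $M=aI+bE$ with $LM=I$; by the same computation as in Theorem~\ref{eq:invertiblitytheorem}, now with $E^{2}=-I$, equating the $I$- and $E$-components gives the system $xa-yb=1$, $ya+xb=0$, whose coefficient matrix has determinant $x^{2}+y^{2}>0$. Solving yields
\[
L^{-1}=\frac{x}{x^{2}+y^{2}}\,I-\frac{y}{x^{2}+y^{2}}\,E,
\]
which again lies in \eqref{eq:complexoperatorform}, and $ML=I$ by commutativity. Thus every nonzero element is a unit and the set is a field. Equivalently, and perhaps more transparently, one may exhibit the map $\varphi(xI+yE)=x+iy$ onto the rectangular-form field \eqref{eq:complexusual}: it is a bijection by the linear-independence step, and the multiplication formula above shows it is a ring homomorphism since $E^{2}=-I$ plays exactly the role of $i^{2}=-1$; being a ring isomorphism onto a field, it makes \eqref{eq:complexoperatorform} a field. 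A third route is through the matrix field $\mathbb{AS}_2$ of \eqref{eq:2by2antisymmetric}.

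The only genuinely non-routine point --- the ``hard part'' --- is the linear independence of $\{I,E\}$ on $\mathbb{AP}_2$: without it ``$L\neq 0$'' is ambiguous, the inverse formula need not land in the set, and the map $\varphi$ is not well defined. Everything else is the $LC_2$ computation with the single sign flip $E^{2}=-I$, and it is precisely this flip that replaces the determinant $x^{2}-y^{2}$ (which vanishes on a pair of lines, the reason $LC_2$ is only a ring) by $x^{2}+y^{2}$ (which vanishes only at the origin), thereby upgrading the ring to a field.
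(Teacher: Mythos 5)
Your overall route is sound, and in fact the paper states this theorem without any proof at all, so your argument supplies details the paper omits: you transcribe the ring axioms from the $LC_2$ development with the single change $E^{2}=-I$ (correctly forced by $E^{2}f(x)=f(x+2)=-f(x)$ on $\mathbb{AP}_2$), you isolate the linear independence of $\{I,E\}$ as operators on $\mathbb{AP}_2$ --- a point the paper never addresses but which is genuinely needed for ``nonzero element'' and for the map onto (\ref{eq:complexusual}) to be well defined --- and you then produce the explicit inverse with determinant $x^{2}+y^{2}$, which is exactly the sign flip that upgrades the ring to a field. The one concrete error is your witness for the nontriviality of $\mathbb{AP}_2$: the function $\cos(\pi x)$ satisfies $\cos(\pi(x+1))=-\cos(\pi x)$, so it is $1$-antiperiodic and hence $2$-\emph{periodic}, not $2$-antiperiodic; as stated, the contradiction step fails. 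Replace it by $\cos(\pi x/2)$ or $\sin(\pi x/2)$, which satisfy $f(x+2)=-f(x)$ (more generally $\cos\bigl((2n+1)\pi x/2\bigr)$, the antiperiod-$2$ analogue of the paper's example $g_n(x)=\cos\bigl((2n+1)\pi x\bigr)$ for antiperiod $1$); with that substitution your linear-independence argument, and hence the whole proof, is complete.
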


\section{Comparison of the  field $\mathbb{C}$ of complex numbers  and $ LC_2 $}

\subsection{The polar form of $\mathbb{C}$ verses the hyperbolic form of $LC_2$}
In this paper, we attempt to write the elements of the subset $\mathcal{H} $ of $ LC_2 $ in exponential form. However, it requires some adjustment to write every element of $LC_2 $ in exponential form in an analogous manner to  writing any element $z$ of the field of complex numbers in polar form.

\begin{theorem}
  Let  $ L= xI+yE \in \mathcal{H} \subset  LC_2 $. Then $L$ can be written in the form
  \begin{equation}\label{eq:hyperbolicform}
    xI+yE= \breve{\rho}e^{\theta E},\quad \breve{\rho}=\sqrt{x^2-y^2},\quad \theta = \atanh (y/x)
  \end{equation}
\end{theorem}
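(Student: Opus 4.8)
The plan is to mimic the derivation of the exponential form for complex numbers in polar coordinates, but using the hyperbolic functions $\cosh$ and $\sinh$ in place of $\cos$ and $\sin$, which is the natural substitution forced by the relation $E^2 = I$ rather than $E^2 = -I$. First I would recall the formula for the exponential of an element of $LC_2$ established earlier, namely $e^{\theta E} = \cosh(\theta) I + \sinh(\theta) E$ (the case $x=0$ of equation~(\ref{eq:exponentialform})). Multiplying by the scalar $\breve\rho = \sqrt{x^2-y^2}$ gives $\breve\rho\, e^{\theta E} = \breve\rho \cosh(\theta) I + \breve\rho \sinh(\theta) E$, so the claim reduces to verifying the two scalar identities $x = \breve\rho \cosh(\theta)$ and $y = \breve\rho \sinh(\theta)$ when $\breve\rho = \sqrt{x^2-y^2}$ and $\theta = \atanh(y/x)$.

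Next I would carry out that verification. Since $L = xI + yE \in \mathcal H$ we have $x^2 - y^2 > 0$, so in particular $x \neq 0$ and $|y/x| < 1$, which makes $\atanh(y/x)$ well-defined. Setting $t = y/x$, the standard identities $\cosh(\atanh t) = 1/\sqrt{1-t^2}$ and $\sinh(\atanh t) = t/\sqrt{1-t^2}$ give $\breve\rho \cosh\theta = \sqrt{x^2-y^2}\cdot \frac{1}{\sqrt{1-y^2/x^2}} = \sqrt{x^2-y^2}\cdot \frac{|x|}{\sqrt{x^2-y^2}} = |x|$, and similarly $\breve\rho \sinh\theta = \frac{y}{|x|}\cdot|x| = y \cdot \operatorname{sign}(x)$ after tracking signs carefully. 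Assembling these recovers $xI + yE$ exactly when $x>0$; the $x>0$ restriction is implicit in how $\breve\rho$ and the $\atanh$ branch are read, and I would note that $\mathcal H$ splits into the part with $x>0$ (which is $\mathcal E$) and the part with $x<0$, the latter being $-\mathcal E$, so the representation as stated is really the one on $\mathcal E$ and extends to $\mathcal H$ up to an overall sign.

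The main obstacle is precisely this sign bookkeeping: the quantity $\sqrt{x^2-y^2}$ is nonnegative by convention, and $\cosh$ is always positive, so the product $\breve\rho\cosh\theta$ can only equal $x$ when $x \ge 0$. A clean proof should either restrict attention to $\mathcal E = \{xI+yE \in \mathcal H : x>0\}$ (consistent with the earlier theorem identifying $\mathcal E$ with $\{e^L : L \in LC_2\}$, since $\breve\rho e^{\theta E} = e^{(\ln\breve\rho) I + \theta E}$) or else absorb a factor $\operatorname{sgn}(x)$ into the formula. I would therefore present the computation on $\mathcal E$, invoking the logarithm formula~(\ref{eq:logartiminhyperbolic}) to guarantee $\theta = \atanh(y/x)$ and $\ln\breve\rho$ are exactly the $E$- and $I$-components of $\ln L$, and then remark that on the remaining component of $\mathcal H$ one writes $L = -( (-x)I + (-y)E )$ with $(-x)I + (-y)E \in \mathcal E$. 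The only other point requiring a word is the uniqueness of $\theta$, which follows because $\atanh$ is a bijection from $(-1,1)$ onto $\mathbb R$, so unlike the complex argument there is no multivaluedness to contend with here.
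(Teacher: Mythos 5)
Your argument is essentially the paper's: both proofs come down to the Euler-type identity $e^{\theta E}=\cosh(\theta)I+\sinh(\theta)E$ and the identification $x=\breve{\rho}\cosh\theta$, $y=\breve{\rho}\sinh\theta$; the paper runs it forward by factoring out $\sqrt{x^2-y^2}$ and \emph{defining} the normalized components to be $\cosh\theta$ and $\sinh\theta$, while you run it backward by computing $\cosh(\atanh(y/x))$ and $\sinh(\atanh(y/x))$ explicitly, which is the same computation in the opposite direction.

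Where you genuinely add something is the sign bookkeeping, and you are right to insist on it. The paper's proof sets $x/\sqrt{x^2-y^2}:=\cosh(\theta)$ without comment, but this is impossible when $x<0$, and $\mathcal{H}=\{x^2-y^2>0\}$ does contain such elements: for $L=-I\in\mathcal{H}$ one has $\breve{\rho}=1$, $\theta=\atanh(0)=0$, so $\breve{\rho}e^{\theta E}=I\neq L$. Since $\breve{\rho}\geq 0$ and $\cosh\theta>0$, the representation as stated can only recover $x>0$, i.e.\ it is valid exactly on $\mathcal{E}=\{xI+yE\in\mathcal{H}:x>0\}$ (consistent with $\mathcal{E}=\{e^{L}:L\in LC_2\}$ and the logarithm formula), and on the other component of $\mathcal{H}$ one must write $L=-\bigl((-x)I+(-y)E\bigr)$ with the bracketed element in $\mathcal{E}$, or equivalently insert a factor $\sgn(x)$. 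So your proof is correct and, unlike the paper's, makes explicit the restriction under which the stated formula actually holds.
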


\begin{proof}
\begin{equation}\label{eq:hyperbolicmodulus}
 xI+yE= \sqrt{x^2-y^2}\left( \frac{x}{\sqrt{x^2-y^2}} I+ \frac{y}{\sqrt{x^2-y^2}}E\right)
\end{equation}
Now define
  \begin{equation}\label{eq:hyperboliccosineandsine}
    \frac{x}{\sqrt{x^2-y^2}}:= \cosh(\theta),\quad \frac{y}{\sqrt{x^2-y^2}}:= \sinh(\theta)
  \end{equation}
Equations in (\ref{eq:hyperboliccosineandsine}) satisfy $ \cosh ^2(\theta)-\sin^2(\theta)=1$, and $\tanh (\theta) = \frac{y}{x}$. From which we can calculate that
\begin{equation}\label{eq:theta}
   \theta = \atanh (y/x) = \frac{1}{2}\ln\left(\frac{1+y/x}{1-y/x}\right)= \frac{1}{2}\ln \left(\frac{x+y}{x-y}\right)
\end{equation}
For subspace $\mathcal{H }$ on which  we defined hyperbolic coordinate form, we have $|y/x|<1$, which coincide with the natural domain of $\atanh $. However, in  $ \mathcal{V}\subset LC_2 $  $\tanh^{-1} $ attains complex values, and in $\mathcal{N}$  $\tanh^{-1} $  is undefined.
\end{proof}

We have the following facts:
\begin{itemize}
  \item $\theta = -\infty $ corresponds to the line $y+x =0 $,
  \item $\theta = \infty $ corresponds to the line $y-x =0 $,
  \item  $\theta = 0 $ corresponds  to the line $y =0 $ or the $x$-axis,
  \item  $ \breve{\rho}= 0 $ corresponds to the pair of intersecting lines $|x|- |y|= 0 $,
  \item  For any $ 0 < c < \infty $, $ \breve{\rho}=c $ corresponds to the hyperbola $x^2-y^2=c^2 $.
\end{itemize}
Multiplication and division  in hyperbolic form is done as follows:
\begin{equation}\label{eq:multanddivinhypform}
   \breve{\rho_1}e^{\theta_1 E} \breve{\rho_2}e^{\theta_2 E}= \breve{\rho_1}\breve{\rho_2} e^{( \theta_1 +\theta_2 ) E},\quad \frac{\breve{\rho_1}e^{\theta_1 E}}{\breve{\rho_2}e^{\theta_2 E}} = \frac{\breve{\rho_1}}{\breve{\rho_2}}e^{( \theta_1 - \theta_2 ) E}.
\end{equation}

\begin{remark}
 In this paper, we named the number $\breve{\rho}$ as \emph{hyperbolic modulus} of  $ xI+yE \in  \mathcal{H} \subset LC_2 $. This is a specific case of the \emph{hyperbolic norm} defined on real Euclidean space  $\mathbb{R} ^n $. See, \cite{JR}.
\end{remark}

\begin{theorem}
Let $ L = xI + yE \in \mathcal{E} \subset LC_2 $ and $n \in \mathbb{N} $. Then
\begin{equation}\label{eq:demovireform}
  L^n = (xI+yE)^n= \breve{\rho}^n (\cosh (n\theta) I +\sinh (n \theta)  E)
\end{equation}
\end{theorem}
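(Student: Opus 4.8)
The plan is to prove the De Moivre–type formula \eqref{eq:demovireform} by combining the hyperbolic polar form \eqref{eq:hyperbolicform} with the exponential law \eqref{eq:multanddivinhypform}, and then unwinding the definition of $e^{\theta E}$ via \eqref{eq:exponentialform}. First I would invoke the earlier theorem giving $L = xI+yE = \breve{\rho}\,e^{\theta E}$ with $\breve{\rho} = \sqrt{x^2-y^2}$ and $\theta = \atanh(y/x)$; this is legitimate since $L \in \mathcal{E} \subset \mathcal{H}$, so $x^2 - y^2 > 0$ and $x > 0$, putting $\theta$ in the natural domain of $\atanh$. Then raising to the $n$-th power and using the multiplication rule \eqref{eq:multanddivinhypform} inductively (or directly, since $\breve{\rho}$ is a positive real scalar and the operators $e^{\theta_j E}$ commute), I get $L^n = \breve{\rho}^n e^{n\theta E}$.

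The second step is to expand $e^{n\theta E}$. Applying \eqref{eq:exponentialform} with the element $0\cdot I + (n\theta) E$ — equivalently, rerunning the series computation $e^{yE} = \cosh(y)I + \sinh(y)E$ from the proof of that theorem with $y$ replaced by $n\theta$ — yields $e^{n\theta E} = \cosh(n\theta)I + \sinh(n\theta)E$. Substituting this back gives exactly
\begin{equation*}
  L^n = \breve{\rho}^n\bigl(\cosh(n\theta)I + \sinh(n\theta)E\bigr),
\end{equation*}
which is the claim. A brief alternative worth mentioning is a direct induction on $n$: the base case $n=1$ is \eqref{eq:hyperbolicform} rewritten via \eqref{eq:hyperboliccosineandsine}, and the inductive step uses the hyperbolic addition formulas $\cosh(a+b) = \cosh a\cosh b + \sinh a \sinh b$ and $\sinh(a+b) = \sinh a \cosh b + \cosh a \sinh b$ together with the multiplication rule \eqref{eq:operatormultiplication} in $LC_2$.

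There is no real obstacle here; the only point requiring a word of care is that the formula is stated for $L \in \mathcal{E}$ rather than all of $\mathcal{H}$, which is exactly what guarantees $\breve{\rho}^n$ stays a positive real and $\theta$ is well defined and real, so no sign ambiguities in $\sqrt{x^2-y^2}$ arise when iterating. I would also note in passing that, because $\cosh$ and $\sinh$ of real arguments are real, $L^n$ remains in $\mathcal{E}$ (its identity component $\breve{\rho}^n\cosh(n\theta)$ is positive), consistent with $\mathcal{E}$ being a group under multiplication as established in Theorem \ref{eq:subgrouptheorem}; this serves as a sanity check rather than a needed step. The write-up should therefore be short: cite \eqref{eq:hyperbolicform}, apply \eqref{eq:multanddivinhypform}, expand with \eqref{eq:exponentialform}, done.
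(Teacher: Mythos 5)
Your proposal is correct and follows essentially the same route as the paper: write $L$ in hyperbolic form $\breve{\rho}e^{\theta E}$, raise to the $n$-th power using the exponential/multiplication law, and expand $e^{n\theta E}=\cosh(n\theta)I+\sinh(n\theta)E$ via the exponential formula. Your extra remarks (the induction alternative and the check that $L^n$ stays in $\mathcal{E}$) are just additional polish on the same argument.
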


\begin{proof}
  The result follows by first writing $ L$ in hyperbolic form and then using identity (\ref{eq:hyperbolicform}) as
   $$ L^n = (xI+yE)^n =   \left( \breve{\rho} e^{\theta E}\right)^n = \breve{\rho}^n e^{n\theta E}=  \breve{\rho}^n (\cosh (n\theta) I + \sinh (n \theta) E) $$
\end{proof}
The equation (\ref{eq:demovireform}) is the hyperbolic counterpart of De Moivre's identity in complex function theory.
\subsection{ The imaginary unit $i$  and rotation, the shift operator $ E$ and reflection}
In  Theorem  \ref{eq:subgrouptheorem},  we have seen that the operation of the shift operator on each element  $L \in \mathcal{H} $ yields a corresponding element $EL \in \mathcal{V} $ and vice versa. The corresponding  pairs of identity and shift components are $(x,y)$ and $(y,x) $ respectively. This is exactly the reflection of a coordinate plane along the line $y=x$. On the other hand, multiplication of a complex number $z$ by the imaginary unit $i$ has a rotational effect that shifts the argument $z$ by $ \frac{\pi}{2}  $. that is,
$$  \arg (iz)= \arg (z) + \frac{\pi}{2} . $$

\begin{theorem}
  Let $f(L)=f(xI+yE)=u(x,y)I+v(x,y)E $. Then invertibility of the operator $f(L)$ and analyticity condition are equivalent.
\end{theorem}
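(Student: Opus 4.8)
The plan is to interpret both "invertibility of $f(L)$" and "analyticity" concretely in terms of the real functions $u(x,y)$ and $v(x,y)$, and then observe that the two conditions are literally the same algebraic statement. First I would recall from Theorem~\ref{eq:invertiblitytheorem} that an element $aI+bE\in LC_2$ is invertible exactly when $a^2-b^2\neq 0$, equivalently $(a-b)(a+b)\neq 0$, equivalently both eigenvalues $\lambda_1=a+b$ and $\lambda_2=a-b$ are nonzero (this is the determinant $\det(aI+bE)=a^2-b^2$ from~(\ref{eq:determinant})). Applying this to $f(L)=u(x,y)I+v(x,y)E$, invertibility of the operator $f(L)$ at the point $L=xI+yE$ means
\begin{equation}\label{eq:invfL}
 \big(u(x,y)-v(x,y)\big)\big(u(x,y)+v(x,y)\big)\neq 0 .
\end{equation}

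Next I would write down what the holomorphic / analyticity condition for $LC_2$-functions should be. By analogy with the Cauchy--Riemann equations — and using the change of variables that diagonalizes $E$, namely the "light-cone" coordinates $s=x+y$, $t=x-y$ (these correspond precisely to the eigenspace decomposition $\mathbb{P}_2=\mathbb{AP}_1\oplus\mathbb{P}_1$ from~(\ref{eq:eigendecomposition})) — a function $f(L)=uI+vE$ is $LC_2$-differentiable exactly when it respects the splitting, i.e. when $u+v$ is a function of $s=x+y$ alone and $u-v$ is a function of $t=x-y$ alone. In the original coordinates this is the split Cauchy--Riemann system $u_x=v_y$, $u_y=v_x$. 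The derivative $f'(L)$ is then itself an element of $LC_2$, and a direct computation (differentiating the relations defining $f'$, or simply reading off the chain rule in the $s,t$ coordinates) gives
\begin{equation}\label{eq:derivoff}
 f'(L)=\tfrac12\big((u_x+v_x)+(u_y+v_y)\big)I+\tfrac12\big((u_x+v_x)-(u_y+v_y)\big)E,
\end{equation}
whose two eigenvalues are $\partial_s(u+v)$ and $\partial_t(u-v)$.

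The heart of the argument is then the following equivalence of the two notions of "invertibility." On one hand, $f(L)$ is invertible as an operator iff its eigenvalues $u\pm v$ are nonzero, i.e.~(\ref{eq:invfL}) holds. On the other hand, saying $f$ is analytic \emph{and} that its derivative $f'(L)$ is invertible (the "analyticity condition" in the sense of a genuine local change of coordinates / conformality analogue) means $f'(L)\in LC_2^*$, i.e.~both eigenvalues $\partial_s(u+v)$ and $\partial_t(u-v)$ of~(\ref{eq:derivoff}) are nonzero. Since, under analyticity, $u+v$ depends only on $s$ and $u-v$ only on $t$, the map $L\mapsto f(L)$ decomposes as the pair of one-variable maps $s\mapsto (u+v)(s)$ and $t\mapsto (u-v)(t)$; such a pair is locally invertible precisely when each factor has nonzero derivative, which is exactly $\partial_s(u+v)\neq 0$ and $\partial_t(u-v)\neq 0$. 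Thus in the analytic category, invertibility of the value $f(L)$ and invertibility of $f'(L)$ — the analyticity (local-homeomorphism / nondegeneracy) condition — are governed by the identical inequalities on the two light-cone coordinates, which is the asserted equivalence.

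I would carry out the steps in this order: (1) restate operator-invertibility of $f(L)$ via its eigenvalues using~(\ref{eq:determinant}); (2) introduce the coordinates $s=x+y,\ t=x-y$ and record that $E$ acts as $+1$ on the $s$-part and $-1$ on the $t$-part, so every $LC_2$-function splits into an $s$-component $(u+v)$ and a $t$-component $(u-v)$; (3) spell out the $LC_2$ holomorphic condition as the split Cauchy--Riemann equations and compute $f'(L)$ and its eigenvalues as in~(\ref{eq:derivoff}); (4) conclude that both "$f(L)$ invertible" and "the analyticity (nondegeneracy) condition on $f$" reduce to "$(u+v)\neq 0,\ (u-v)\neq 0$" respectively "$\partial_s(u+v)\neq 0,\ \partial_t(u-v)\neq 0$," and note these are the same type of condition on the decoupled coordinates, giving the equivalence. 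The main obstacle — really a matter of precision rather than depth — is pinning down exactly what the paper intends by "analyticity condition": whether it means the split Cauchy--Riemann equations themselves, or (as the equivalence with pointwise invertibility of $f(L)$ forces) the stronger statement that the Jacobian $f'(L)$ lies in $LC_2^*$. I would state the intended meaning explicitly at the top of the proof so that the equivalence becomes the clean eigenvalue identity above, and then the verification is just the determinant computation already used in Theorem~\ref{eq:subgrouptheorem}.
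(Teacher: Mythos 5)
The paper states this theorem without any proof, so your argument can only be judged on its own merits; as written it has a genuine gap at its final step, where structural analogy is treated as logical equivalence. In the coordinates $s=x+y$, $t=x-y$ you reduce invertibility of the value $f(L)$ to $\big(u(x,y)+v(x,y)\big)\big(u(x,y)-v(x,y)\big)\neq 0$ and nondegeneracy of the derivative to $\partial_s(u+v)\neq 0$ and $\partial_t(u-v)\neq 0$, and then conclude because these are ``the same type of condition on the decoupled coordinates.'' That is not an equivalence: the first constrains the values of $u\pm v$ at a point, the second constrains their first derivatives, and neither implies the other. Counterexamples sit inside the paper itself: $f(L)=L$ satisfies the Cauchy--Riemann type equations everywhere with $f'\equiv I\in LC_2^{*}$, yet its value is not invertible at any point of the null cone $x^{2}-y^{2}=0$; conversely $f(L)=\overline{L}=xI-yE$ has invertible values on all of $LC_2^{*}$ but, by the paper's own example, is nowhere differentiable in the $LC_2$ sense; and $f(L)=I+L^{2}$ is holomorphic with invertible value but non-invertible derivative at $L=\mathbf{0}$. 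Hence under either reading of ``analyticity condition'' --- the Cauchy--Riemann type system alone, or that system together with $f'(L)\in LC_2^{*}$ --- the asserted pointwise equivalence with invertibility of $f(L)$ is false, and no refinement of the eigenvalue bookkeeping can close it; the most your computation establishes is that both conditions split along the eigendecomposition $\mathbb{P}_2=\mathbb{P}_1\oplus\mathbb{AP}_1$ into separate conditions on the two components.

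A secondary, repairable slip: your displayed formula $f'(L)=\tfrac12\big((u_x+v_x)+(u_y+v_y)\big)I+\tfrac12\big((u_x+v_x)-(u_y+v_y)\big)E$ has eigenvalues $(u+v)_x$ and $(u+v)_y$, not $\partial_s(u+v)$ and $\partial_t(u-v)$; the operator with the latter eigenvalues is $\frac{\partial f}{\partial L}=\tfrac12\big((u_x+v_y)I+(u_y+v_x)E\big)$ as in (\ref{eq:partialfbypartialL}), and only under the Cauchy--Riemann type equations (\ref{eq:CRtypr}) does it reduce to $u_xI+v_xE$. If you want a true statement in this neighborhood, prove instead: for $f$ holomorphic in the $LC_2$ sense, $f'(L)$ is invertible if and only if $\partial_s(u+v)\neq 0$ and $\partial_t(u-v)\neq 0$; that is a clean determinant computation, but it is a different assertion from the theorem as stated.
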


\subsection{The set of points of singularity as the degeneracy of families of level curves}
In the field of complex numbers $ \mathbb{C} $, the families of curves with  constant modulus are circles. That is,
$$ |z|^2= |x+iy|^2=  x^2+y^2 =c^2, c > 0   $$
 are circles. The degeneracy of this family of concentric circles is the point $z =0$ with zero modulus. For this point, $\frac{1}{z}=  \frac{1}{x+iy}=\frac{x-iy}{x^2+y^2} $ is undefined. We now investigate the case of $LC_2 $. For the case $ \mathcal{H} \subset LC_2 $, for each constant $c > 0 $ the level curve $\breve{\rho}= c $ corresponds to the hyperbola $x^2-y^2=c^2$. The degenerate case of  families of such hyperbolas is the curve $ \breve{\rho}= 0 $, which represents the pair of intersecting lines $|x|-|y|=0 $.  For the set of points $(x,y) $ on this curve, $ \frac{1}{L}=  \frac{1}{xI+yE}= \frac{xI-yE}{x^2-y^2} $ is undefined.  This gives the highlight of the set of points of singularities as the degeneracy of families of curves with points of constant modulus (or hyperbolic modulus).

\begin{figure}[!h]
	\centering
	\begin{minipage}[t]{4cm}
		\centering
		\includegraphics[scale=0.5]{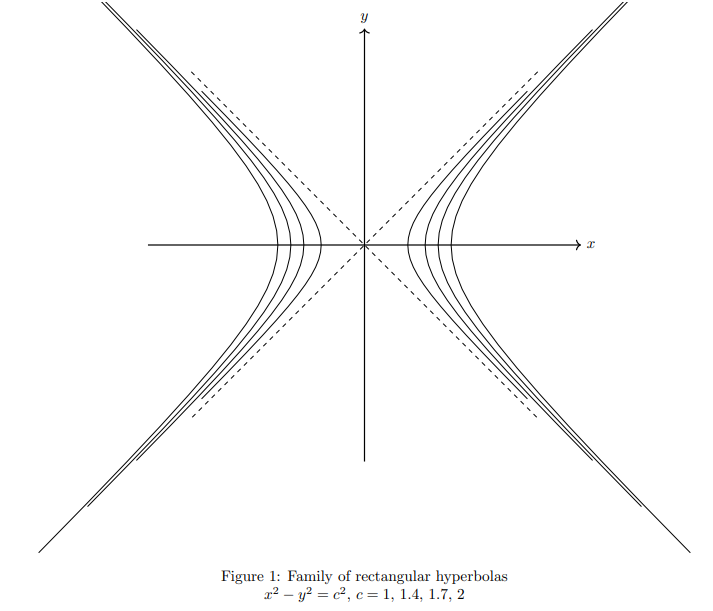}
	\end{minipage}
	\hspace{4cm}
	\begin{minipage}[t]{4cm}
		\centering
		\includegraphics[scale=0.5]{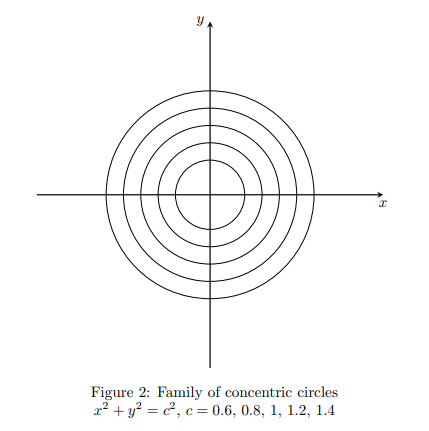}
	\end{minipage}
\end{figure}

\newpage

\subsection{Limits and continuity}
\begin{definition}
Suppose that an $LC_2$ function $f$ is defined in some deleted delta neighborhood of a point $L_0$. An operator  $A \in LC_2 $ is said to be a limit of $f(L)$ at $L_0$ written as $ \lim_{L\rightarrow L_0}f(L)=A $, if for every $\epsilon > 0 $ there exists a $\delta > 0 $ such that $|f(L)-A|< \epsilon $ whenever $0<|L-L_0|< \delta $.
\end{definition}

\begin{theorem}
Let $f: LC_2 \rightarrow LC_2 $  be an $LC_2$ function and $L_0=x_0I+y_0E  \in LC_2 $. If $ f(L)=f(xI+yE)=u(x,y)I+v(x,y)E $, then $  \lim _{L \rightarrow L_0}f(L) = aI+bE $, if and only if
 $$  \lim _{(x,y) \rightarrow (x_0,y_0)}u(x,y) = a,\quad  \lim _{(x,y) \rightarrow (x_0,y_0)}v(x,y) = b,$$
In the usual definition of limits of the functions $u$ and $v$ of two variables.
\end{theorem}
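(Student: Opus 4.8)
The plan is to reduce the convergence of the $LC_2$-valued function $f(L)$ to the simultaneous convergence of its two real scalar components $u$ and $v$, exploiting the fact that, under the norm $\|\cdot\|_2$ introduced in \eqref{eq:L2norm} (which the excerpt agrees to write simply as $|\cdot|$), the space $LC_2$ is just $\mathbb{R}^2$ with the Euclidean norm, and convergence in $\mathbb{R}^2$ is coordinatewise. Concretely, for $L = xI+yE$ one has $|L| = \sqrt{x^2+y^2}$, so that for any operator $A=aI+bE$,
\begin{equation}\label{eq:normequiv}
  |f(L)-A| = \sqrt{(u(x,y)-a)^2 + (v(x,y)-b)^2}.
\end{equation}
First I would record the two elementary inequalities that make the argument go in both directions: $\max\{|u-a|,|v-b|\} \le |f(L)-A| \le |u-a| + |v-b|$. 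The left inequality gives the ``only if'' direction and the right inequality (or the obvious bound $|f(L)-A|\le \sqrt{2}\max\{|u-a|,|v-b|\}$) gives the ``if'' direction.

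For the forward direction, I would assume $\lim_{L\to L_0} f(L) = aI+bE$ and fix $\epsilon>0$; by definition there is $\delta>0$ with $|f(L)-A|<\epsilon$ whenever $0<|L-L_0|<\delta$. The key observation is that $|L - L_0| = \sqrt{(x-x_0)^2+(y-y_0)^2}$ is exactly the Euclidean distance between $(x,y)$ and $(x_0,y_0)$, so the condition $0<|L-L_0|<\delta$ is literally the deleted-neighbourhood condition $0<\|(x,y)-(x_0,y_0)\|<\delta$ used in the definition of the two-variable limit. Then from $|u(x,y)-a|\le |f(L)-A| < \epsilon$ and the analogous bound for $v$, the real limits $\lim_{(x,y)\to(x_0,y_0)}u = a$ and $\lim_{(x,y)\to(x_0,y_0)}v = b$ follow immediately. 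For the converse direction, given $\epsilon>0$, apply the definitions of the two real limits with tolerance $\epsilon/2$ to obtain $\delta_1,\delta_2>0$; set $\delta=\min\{\delta_1,\delta_2\}$, and then for $0<|L-L_0|<\delta$ one has $|f(L)-A| \le |u-a|+|v-b| < \epsilon/2+\epsilon/2 = \epsilon$, as required.

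I do not expect any serious obstacle here: the statement is essentially the assertion that the metric on $LC_2$ induced by $\|\cdot\|_2$ is the standard metric on $\mathbb{R}^2$, together with the standard fact that a sequence (or net) in $\mathbb{R}^2$ converges iff it converges in each coordinate. The only point deserving a word of care is that the equivalence as stated is specific to the norm $\|\cdot\|_2$; however, since all norms on the finite-dimensional space $LC_2$ are equivalent, the same characterization holds verbatim for $\|\cdot\|_1$ and $\|\cdot\|_\infty$, so there is no real loss of generality in the convention adopted just before this section. One should also note in passing that the limit operator $A=aI+bE$, when it exists, is unique — this follows from the uniqueness of limits of $u$ and $v$ as real-valued functions, or directly from the Hausdorff property of the normed space $(LC_2,\|\cdot\|_2)$.
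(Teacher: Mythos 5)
Your proposal is correct and matches the paper's intent: the paper's own proof is simply a deferral to the standard argument for limits of complex functions of one variable (citing Ponnusamy--Silverman), and what you have written out --- the identity $|f(L)-A|=\sqrt{(u-a)^2+(v-b)^2}$ under the norm $\|\cdot\|_2$ together with the componentwise $\epsilon$--$\delta$ estimates in both directions --- is precisely that standard argument transplanted to $LC_2$. No gap; your remarks on norm equivalence and uniqueness of the limit are harmless additions.
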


\begin{proof}
The proof is similar to that of a complex function of one variable. Refer, for example, \cite{PS}.
\end{proof}

\begin{definition}
 An operator valued function  $f: LC_2 \rightarrow LC_2 $  is said to be continuous at a point $L_0 \in LC_2 $ if
 $$  \lim _{L \rightarrow L_0}f(L) = f(L_0) .$$
\end{definition}

\subsection{ Differentiability  in the sense similar to that of complex functions does not apply to $LC_2$ functions}
Let $f(L)=u(x,y)I+v(x,y)E $ be a $LC_2$ function. For differentiability $f$  at $L_0=x_0I+y_0E $ in a sense similar to that of a complex function, we evaluate the existence of the limit:
\begin{equation}\label{eq:differentiablityatL0}
   \lim \limits_{ L\rightarrow L_0} \frac{f(L)-f(L_0)}{ L-L_0}
\end{equation}
 along any direction (or path) leading to  the point $(x_0,y_0)$. However, such a situation  will not  always be realistic for every  path. For every $ \delta >  0 $ neighborhood of the point $(x_0,y_0)$ there are an uncountable number of points, namely the set of points $(x,y)$ such that
\begin{equation}\label{eq:badpoints}
   0 < \sqrt{(x-x_0)^2+(y-y_0)^2}< \delta,  \quad   \text{ and } \quad    \frac{y-y_0}{x-x_0}= \pm 1,
\end{equation}
where the expression (\ref{eq:differentiablityatL0}) is undefined. For each point $L_0=x_0I+y_0E$ the $\Delta L = (x-x_0)I+(y-y_0)E $ is not invertible at each point $(x,y)$ on the curve $(x-x_0)^2-(y-y_o)^2=0 $. For this reason, we agree to define differentiability at a point $L_0$ to mean the existence of the limit (\ref{eq:differentiablityatL0}) along any direction except the curve $|y-y_0|=|x-x_0|$ or any curve intersecting this curve. That is, any curve in the open set $LC_2^*$ leading to the point $(x_0, y_0)$.
\begin{definition}
Let $f: LC_2 \rightarrow LC_2 $ be an $LC_2$ function defined in some neighborhood of $L_0=x_0I+y_0E $. We say that $f$ is differentiable at a point $L_0$ if the limit (\ref{eq:differentiablityatL0}) exists along any path that doesn't intersect the curve $|y-y_0|-|x-x_0|=0 $ and leads to the point $L_0$. We denote the derivative of $f$ at $L_0 $  by $f'(L_0)$.
  \end{definition}

\begin{definition}
  The points in the domain of a function $f: D \subset LC_2 \rightarrow LC_2 $ where $f$ is not differentiable are called singular points.
\end{definition}

\begin{example}
  The function $f(L)=L$ is differentiable at every point $ L_0 $  in the sense of $LC_2$  and that $f'(L_0)= I $.
  In fact, $$ \frac{L-L_0}{L-L_0}= (L-L_0)(L-L_0)^{-1}=I, \quad  \forall (L-L_0) \in LC_2^*. $$
  \end{example}
\begin{example}
  The $LC_2$ function $f(L)= L \overline{L}$ is differentiable at $L=0 $ in the sense of $LC_2$ but not analytic at $L=0 $. Indeed, $$\lim_{L\rightarrow 0} \frac{L \overline{L}}{L}=   \lim_{L\rightarrow 0} \overline{L}=0,\quad \forall L \in LC_2^*. $$
  \end{example}

\begin{example}
  The function $f(L)=\sin L $ is differentiable in the sense of $LC_2$ at every point $ L_0 $  and that $f'(L_0)=  \cos L_0 $.
  \begin{equation}\label{eq:oddpowerdifference}
      L^{2n+1}- L_0^{2n+1} = (L -L_0) \sum_{k=0}^{2n} L^{2n-k}L_0^{k}
  \end{equation}
 and by (\ref{eq:sineandcosine}) and (\ref{eq:oddpowerdifference})
 \begin{equation}\label{eq:sinediffquotient}
  \frac{ \sin L - \sin L_0 }{L-L_0} = (L -L_0)\sum_{n=0}^{\infty} \frac{ \sum_{k=0}^{2n}(-1)^n L^{2n-k}L_0^{k}}{(2n)!(L-L_0)}
 \end{equation}
   Now applying $\lim  L \rightarrow L_0$ to (\ref{eq:sinediffquotient}) in $LC_2^* $ and by  (\ref{eq:sineandcosine}), we get the desired result.
 \end{example}

\begin{example}
     We know that the complex function  $g(z)= x-iy$ is not differentiable in the sense of $ \mathbb{C} $ functions. Let $ f(L)= \bar{L}=xI-yE $.  Then we show that $f$ is not differentiable in the sense of $LC_2 $.
    \begin{equation}\label{eq:conjugatedifferencequotient}
      \frac{\overline{L}-   \overline{L_0}}{L-L_0}= \frac{(x-x_0)I-(y-y_0)E}{(x-x_0)I+(y-y_0)E}.
    \end{equation}
    Now, by putting $x=x_0$ and letting $\lim  y \rightarrow y_0$ we get the limit value $-I$. On the other hand, by putting $y=y_0$ and letting $\lim  x \rightarrow x_0$ we get the limit value $I$. Both of these two paths lie inside $LC_2^*$. This shows that $f(L)= \overline{L}$ is not differentiable at any point $L_0$ in the sense of $LC_2$.
 \end{example}

\begin{definition}
An $LC_2$ function $f$ is said to be differentiable at an arbitrary point $L$ in its domain if the limit
  \begin{equation}\label{eq:differentiablityatL}
   \lim \limits_{ \triangle L \rightarrow 0} \frac{f(L+\triangle L)-f(L)}{\triangle L}
\end{equation}
exists. We denote by $f'(L)$ the derivative of $f(L)$.
\end{definition}

 \begin{definition}
 An $LC_2$ function $f$ is said to be \emph{holomorphic in the sense of $LC_2$} in an open set $ D \subset LC_2$ if it is differentiable at each point $L \in D $ in the sense of $LC_2$. The function $f$ is said to be holomorphic at a point $L_0 $ if there exists some open neighborhood  $D$ of $L_0$ on which $f$ is holomorphic in $D$ in the sense of $LC_2 $.
 \end{definition}
The Cauchy-Reimann equations are necessary conditions for the analyticity of complex functions. See, for example, \cite{DP}, \cite{SL}. We set a counterpart of the $LC_2$ functions associated with a \emph{Cauchy-Reimann type equations}.
\begin{theorem}
Suppose that $f(L)=f(xI+yE)=u(x,y) I+v(x,y)E$ and that $\frac{df}{dL}$ exists at the point $ L_0= x_0I+y_0E $.
The necessary condition for the operator function $f(L)$ to be analytic in the region $\Omega \subset \mathbb{ R}^2 $ is that the first-order  partial derivative of $u$ and $y$ exist and satisfy the Cauchy-Riemann type equations:
\begin{equation}\label{eq:CRtypr}
  u_x=v_y,\quad u_y = v_x .
\end{equation}
\end{theorem}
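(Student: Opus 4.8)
The plan is to run the familiar "two admissible paths" argument, adapted to $LC_2$. Since $f'(L_0)$ is assumed to exist, the limit in (\ref{eq:differentiablityatL}) is independent of how $\triangle L$ approaches $\mathbf{0}$ within the admissible set (any path not meeting the cone $|\triangle y| = |\triangle x|$); hence it suffices to evaluate the difference quotient along two convenient one-parameter families of increments and to equate the two resulting operators in $LC_2$.

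First I would take $\triangle L = \triangle x\, I$ with $\triangle x \in \mathbb{R}\setminus\{0\}$. This line lies in $LC_2^*$, since $\det(\triangle x\, I) = (\triangle x)^2 \neq 0$, and it meets the forbidden cone only at the origin, so it is an admissible path. Using $I^{-1} = I$ and the decomposition $f(L) = u(x,y)I + v(x,y)E$, the quotient equals
$$\frac{u(x_0+\triangle x, y_0)-u(x_0,y_0)}{\triangle x}\, I + \frac{v(x_0+\triangle x, y_0)-v(x_0,y_0)}{\triangle x}\, E ,$$
so letting $\triangle x \to 0$ shows $u_x(x_0,y_0)$ and $v_x(x_0,y_0)$ exist and that $f'(L_0) = u_x\, I + v_x\, E$. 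Next I would take $\triangle L = \triangle y\, E$ with $\triangle y \in \mathbb{R}\setminus\{0\}$; here $\det(\triangle y\, E) = -(\triangle y)^2 \neq 0$, so this path is also admissible. The key computational point is that $(\triangle y\, E)^{-1} = \tfrac{1}{\triangle y}\, E$ because $E^2 = I$; multiplying the numerator $\triangle u\, I + \triangle v\, E$ on the right by $\tfrac{1}{\triangle y}\, E$ interchanges the roles of $I$ and $E$, giving
$$\frac{v(x_0, y_0+\triangle y)-v(x_0,y_0)}{\triangle y}\, I + \frac{u(x_0, y_0+\triangle y)-u(x_0,y_0)}{\triangle y}\, E ,$$
and letting $\triangle y \to 0$ yields $f'(L_0) = v_y\, I + u_y\, E$, so $u_y(x_0,y_0)$ and $v_y(x_0,y_0)$ exist as well.

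Finally, equating the two expressions for $f'(L_0)$ and comparing the identity-component and the shift-component coefficients (which are unique, since $\{I,E\}$ is a basis of $LC_2$ over $\mathbb{R}$) gives $u_x = v_y$ and $v_x = u_y$ at $(x_0,y_0)$; since this holds at every point where $f$ is differentiable, it holds throughout the region $\Omega$ on which $f$ is analytic, which is the asserted necessary condition. The only real subtlety — and the step most prone to error — is the second path: one must use $E^{-1} = E$ rather than the complex-analytic reflex $i^{-1} = -i$. It is precisely this swap of the roles of $I$ and $E$, forced by $E^2 = I$ instead of $E^2 = -I$, that makes the $LC_2$ Cauchy–Riemann equations read $u_x = v_y,\ u_y = v_x$ rather than $u_x = v_y,\ u_y = -v_x$.
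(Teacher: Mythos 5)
Your proposal is correct and follows essentially the same route as the paper, which also evaluates the difference quotient along the two increments $\triangle y = 0$ and $\triangle x = 0$ and equates the results; you simply carry out in full the computation the paper only sketches. In particular, your observation that $(\triangle y\, E)^{-1} = \tfrac{1}{\triangle y}E$ (since $E^2 = I$) swaps the identity and shift components is exactly the step that produces $u_x = v_y$, $u_y = v_x$ in place of the classical complex-variable signs.
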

\begin{proof}
  Apply the limits $\triangle L \rightarrow 0 $  for two specific cases, one with $\triangle x =0 $ and  $\triangle y \rightarrow 0 $ and the second with $ \triangle y =0 $ and  $\triangle x \rightarrow 0 $ . The result follows by equating the two results.
\end{proof}

\subsection{Comparative analysis of analyticity in the sense of $\mathbb{C}$ and in the sense of $LC_2$}
Suppose that $f(L)=f(xI+yE) = u(x,y)I+v(x,y)E $  is defend as an operator valued function on some subset of $LC_2 $, and that $g(z)=f(x+iy) = u(x,y)+ iv(x,y) $   is defend as a complex-valued function on some subset of $\mathbb{C} $. The identity component of the function $f(L)$ and the real part of  $g(z)$ are identical and are equal to $u(x,y)$; the shift component of the function $f(L)$ and the imaginary part of $g(z)$ are identical and are equal to $v(x,y)$. Note that $f$ and $g$  may not be the same. We see some practical examples where both $f$ and $g$ are analytic, either of them is analytic or none of them is analytic.

 \begin{example}
    Let $ f(L)= L^2=(xI+yE)^2= (x^2+y^2)I+(2xy)E $, and $g(z)= (x^2+y^2)+(2xy)i $. Then $f$ is  analytic in the sense of $LC_2 $, $g$ is not analytic in the sense of $ \mathbb{C} $.
 \end{example}

 \begin{example}
    Let $ f(L)= (x^2-y^2)I+(2xy)E $, and $g(z)= (x^2-y^2)+(2xy)i $. Then $f$ is not analytic in the sense of $LC_2 $, $g$ is analytic in the sense of $ \mathbb{C} $.
 \end{example}
If a function $f(z)=u(x,y)+iv(x,y)$ is analytic in a domain $D$ then its real part $u$ and imaginary part $v$ are harmonic. That is, they satisfy Laplace's equation. See, for example, \cite{DP}.

\begin{definition}[\textbf{Wave equation}]
 Let $u=u(x,y) $. Then the patrial differential equation
  $$ \frac{\partial^2 u}{\partial x^2}  -\frac{\partial^2u}{\partial y^2}:= \Box u= 0, $$
 where, $ \Box :=   \frac{\partial^2 }{\partial x^2} - \frac{\partial^2 }{\partial y^2} $ is the wave operator (in two-dimensional space), which is called the wave equation.
\end{definition}

In complex functions theory, if  a function $ f(z)$ of a complex argument $z $  is analytic then $\frac{\partial f }{\partial \bar{z}} =0 $. See, for example, \cite{SL}. In the next theorem, we show the following counterpart for the operator-valued functions.
\begin{equation}\label{eq:partialfbypartialL}
  \frac{\partial f}{\partial L}  =\frac{\partial f}{\partial x} \frac{\partial x}{\partial L}+  \frac{\partial f}{\partial y}  \frac{\partial y}{\partial L} = \frac{1}{2}\left(I \frac{\partial }{\partial x} +  E \frac{\partial }{\partial y}  \right)f
\end{equation}

\begin{equation}\label{eq:partialfbypartialLbar}
  \frac{\partial f}{\partial \bar{L} }  =\frac{\partial f}{\partial x} \frac{\partial x}{\partial \bar{L} }+  \frac{\partial f}{\partial y}  \frac{\partial y}{\partial \bar{L} } = \frac{1}{2}\left( I \frac{\partial }{\partial x} -  E\frac{\partial }{\partial y}  \right)f
\end{equation}

\begin{align}\label{eq:partialLpartialLbar}
  \frac{\partial }{\partial L}  \frac{\partial }{\partial \bar{L} }&=
    \frac{1}{4}\left(I \frac{\partial }{\partial x} +  E \frac{\partial }{\partial y}\right) \left( I \frac{\partial }{\partial x} -  E\frac{\partial }{\partial y}\right) \nonumber  \\
   & = \frac{1}{4}\left( \frac{\partial^2 }{\partial x^2} - \frac{\partial^2 }{\partial y^2} \right)I  \nonumber  \\
   &= \frac{1}{4}\Box I
\end{align}

\begin{theorem}
  Let $f: LC_2 \rightarrow LC_2$ be an $LC_2$-valued function. The condition
  $$\frac{\partial f }{\partial \bar{L} } =0,  $$
  is a necessary for  $f$ to be  analytic in the $LC_2$ sense.
\end{theorem}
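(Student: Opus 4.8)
The plan is to reduce the claim to the Cauchy–Riemann type equations already established earlier in the excerpt, using the operator identity (\ref{eq:partialfbypartialLbar}) for $\partial/\partial\bar L$. First I would write $f(L) = u(x,y)I + v(x,y)E$ as usual and apply the formula
$$
\frac{\partial f}{\partial\bar L} = \frac{1}{2}\left(I\frac{\partial}{\partial x} - E\frac{\partial}{\partial y}\right)f .
$$
Expanding the right-hand side and collecting the $I$-component and the $E$-component separately (using $I^2=I$, $IE=E$, $E^2=I$), one gets
$$
\frac{\partial f}{\partial\bar L} = \frac{1}{2}\bigl(u_x - v_y\bigr)I + \frac{1}{2}\bigl(v_x - u_y\bigr)E .
$$
Hence $\partial f/\partial\bar L = \mathbf{0}$ if and only if $u_x = v_y$ and $u_y = v_x$, which are exactly the Cauchy–Riemann type equations (\ref{eq:CRtypr}).

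Next I would invoke the earlier theorem stating that if $f$ is analytic in the $LC_2$ sense (i.e.\ $df/dL$ exists), then $u,v$ have first-order partials satisfying (\ref{eq:CRtypr}). Combining this with the computation above yields $\partial f/\partial\bar L = \mathbf{0}$, establishing necessity. I would be careful to note that the identity (\ref{eq:partialfbypartialLbar}) is to be read as an equality of operators applied to $f$, and that the componentwise splitting is legitimate because $\{I,E\}$ is a basis of $LC_2$ over $\mathbb{R}$, so $aI+bE = \mathbf{0}$ forces $a=b=0$.

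The main obstacle is essentially bookkeeping rather than a deep difficulty: one must be sure that the formal manipulation $\partial x/\partial L = \partial x/\partial\bar L = \tfrac12 I$ and $\partial y/\partial L = \tfrac12 E$, $\partial y/\partial\bar L = -\tfrac12 E$ — which come from inverting (\ref{eq:LandLbar}) — are consistent, and that differentiability in the $LC_2$ sense (which, per the earlier discussion, excludes approach along the degenerate null directions $|y-y_0|=|x-x_0|$) still suffices to guarantee that the two coordinate limits used to derive (\ref{eq:CRtypr}) are valid. Since the two paths $\triangle x=0$ and $\triangle y=0$ both lie in $LC_2^*$, this causes no trouble. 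I would therefore present the argument in two short steps: (1) derive the componentwise expression for $\partial f/\partial\bar L$; (2) quote the CR-type theorem to conclude that analyticity forces both components to vanish.
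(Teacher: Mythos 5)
Your proposal is correct and follows essentially the same route as the paper: apply the identity $\frac{\partial f}{\partial \bar{L}} = \frac{1}{2}\left(I\frac{\partial}{\partial x} - E\frac{\partial}{\partial y}\right)f$ to $f = uI + vE$, collect the $I$- and $E$-components to get $\frac{1}{2}(u_x - v_y)I + \frac{1}{2}(v_x - u_y)E$, and then invoke the Cauchy--Riemann type equations guaranteed by analyticity in the $LC_2$ sense. Your extra remarks on the basis $\{I,E\}$ and on the admissibility of the coordinate paths are sound but not needed beyond what the paper already does.
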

 \begin{proof}
   Let $f(L)=f(xI+yE)= u(x,y)I+ v(x,y)E $.  By (\ref{eq:partialfbypartialLbar}) and  by Cauchy-Reimann type equations (\ref{eq:CRtypr}),
   \begin{align*}
    \frac{\partial f}{\partial \bar{L} } & = \frac{1}{2}(u_x I  + v_x E )I -  \frac{1}{2}  (u_y I  +  v_y E )E \\
      & = \frac{1}{2}(u_x   - v_y )I +  \frac{1}{2} (v_x-u_y) E \\
      & = 0.
   \end{align*}
   \end{proof}
\begin{theorem}
Let $f(L)$ and $g(L)$ have identity and shift components with continuous first-order partial derivatives that satisfy the Cauchy-Riemann type condition. Then  the sum, products and composition have the identity, and the shift components have continuous first-order partial derivatives that satisfy the Cauchy-Riemann condition.
\end{theorem}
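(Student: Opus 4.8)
The plan is to verify the three closure statements by direct computation with the Cauchy--Riemann type equations $u_x=v_y$, $u_y=v_x$, handling the sum, the product, and the composition in turn. I would write $f(L)=f(xI+yE)=u(x,y)I+v(x,y)E$ and $g(L)=g(xI+yE)=p(x,y)I+q(x,y)E$, and assume $u,v,p,q\in C^1$ with $u_x=v_y$, $u_y=v_x$, $p_x=q_y$, $p_y=q_x$. In every case the new identity and shift components will again be built from $u,v,p,q$ by $C^1$ operations (sums, products, or a composition), so continuity of the first-order partials will follow automatically from the ordinary product and chain rules; the only real content is the algebraic identity showing that the Cauchy--Riemann type relations survive.

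For the sum, $(f+g)(L)=(u+p)I+(v+q)E$, and linearity of differentiation gives $(u+p)_x=u_x+p_x=v_y+q_y=(v+q)_y$, and likewise $(u+p)_y=(v+q)_x$, so this case is a one-liner. For the product I would first use the multiplication rule (\ref{eq:operatormultiplication}) to get $(fg)(L)=(up+vq)I+(uq+vp)E$, so the candidate components are $U=up+vq$ and $V=uq+vp$. Differentiating by the product rule, $U_x=u_xp+up_x+v_xq+vq_x$ and $V_y=u_yq+uq_y+v_yp+vp_y$; substituting the four hypotheses $u_x=v_y$, $p_x=q_y$, $v_x=u_y$, $q_x=p_y$ into $U_x$ shows the two expressions agree term by term, and $U_y=V_x$ comes out the same way with the complementary substitutions.

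The composition is the step I expect to demand the most care, essentially as bookkeeping with the chain rule. Writing $f$ on a generic argument $M=sI+tE$ as $f(M)=u(s,t)I+v(s,t)E$, one gets $(f\circ g)(L)=u(p(x,y),q(x,y))I+v(p(x,y),q(x,y))E$, so $U=u(p,q)$ and $V=v(p,q)$. The two-variable chain rule gives $U_x=u_s p_x+u_t q_x$ and $V_y=v_s p_y+v_t q_y$, with the partials of $u,v$ evaluated at $(p,q)$; then I would apply the Cauchy--Riemann type equations for $f$ \emph{in its own variables} ($u_s=v_t$, $u_t=v_s$ at $(p,q)$) together with those for $g$ ($p_x=q_y$, $p_y=q_x$) to rewrite $U_x=v_t q_y+v_s p_y=V_y$, and symmetrically $U_y=V_x$. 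The point to flag here is that this is legitimate only when the image of $g$ stays inside the domain of $f$ and, consistently with the differentiability convention of this section, away from the exceptional curves where $\Delta L$ fails to be invertible; granting that, the chain rule delivers continuous first-order partials and the relations close.

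Finally I would note that all of this becomes transparent in the idempotent picture: with $e_\pm=\tfrac12(I\pm E)$ (so $e_\pm^2=e_\pm$ and $e_+e_-=0$) and the null coordinates $\xi=x+y$, $\eta=x-y$, the Cauchy--Riemann type equations are exactly the statement that an $LC_2$-analytic $f$ has the diagonal form $f(L)=\phi(\xi)e_++\psi(\eta)e_-$ with $\phi,\psi$ of class $C^1$ in one variable. Sums, products (using $e_+e_-=0$), and compositions of such maps are again of this form, with components $\phi_1+\phi_2$, $\phi_1\phi_2$, $\phi_1\circ\phi_2$ (and the analogues for $\psi$), so closure is immediate and the bookkeeping of the composition step dissolves into the one-variable chain rule. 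Either route completes the proof; I would present the direct computation and mention the idempotent reformulation as the conceptual reason it works.
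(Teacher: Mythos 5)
Your proposal is correct and follows essentially the same route as the paper: direct verification of the Cauchy--Riemann type relations, using the product rule for the components $up+vq$ and $uq+vp$ of the product and the two-variable chain rule for the composition (the sum being immediate by linearity), exactly as in the paper's computation. The closing remark recasting analytic $LC_2$ functions via the idempotents $e_\pm=\tfrac12(I\pm E)$ and the null coordinates $\xi=x+y$, $\eta=x-y$ is a correct conceptual bonus not present in the paper, but your core argument coincides with the paper's proof.
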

\begin{proof}
   Let $f(L)=u_1 I + v_1E$, and $ g(L)=u_2I+v_2 E $ have identity and shift components whose first order partial derivatives are continuous and satisfy the Cauchy-Riemann type equations. We have the product:
   \begin{equation}\label{eq:productfandg}
      f(L)g(L)= (u_1 I + v_1E)(u_2I+v_2 E)= u I+v E,
   \end{equation}
 where
  \begin{equation}\label{eq:productcomponent}
     u= u_1u_2 + v_1v_2,\quad  v= u_1v_2 + u_2v_1,
  \end{equation}
from which (\ref{eq:productcomponent}) by using the cauchy-Riemann type equations given in (\ref{eq:CRtypr}), we get
\begin{equation*}
\begin{aligned}[c]
u_x&=u_{1x}u_2+u_1u_{2x}+v_{1x}v_2+v_1v_{2x}\\
&=v_{1y}u_2+u_1v_{2y}+u_{1y}v_2+v_1u_{2y}\\
&=(v_1u_2+u_1v_2)_y\\
&=v_y,
\end{aligned}
\qquad
\begin{aligned}[c]
u_y &=u_{1y}u_2+u_1u_{2y}+v_{1y}v_2+v_1v_{2y}\\
&=v_{1x}u_2+u_1v_{2x}+u_{1x}v_2+v_1u_{2x}\\
&=(v_1u_2+u_1v_2)_x\\
&=v_x.
\end{aligned}
\end{equation*}
For the composition of $f$ and $g$
  \begin{align}\label{eq:composition}
      f(g(L))&=f(u_2I+ v_2E )  \nonumber \\
     & = u_1(u_2,v_2)I+v_1(u_2,v_2)E    \nonumber  \\
     & = uI+vE,
  \end{align}
  where $ u= u_1(u_2,v_2) $ and $ v = v_1(u_2,v_2)$.
\end{proof}

\begin{align}\label{eq:partialupartialx}
  \frac{\partial u}{\partial x} & = \frac{\partial u_1}{\partial u_2}  \frac{\partial u_2}{\partial x} + \frac{\partial u_1}{\partial v_2}  \frac{\partial v_2}{\partial x}  \nonumber \\
   & = \frac{\partial v_1}{\partial v_2}  \frac{\partial v_2}{\partial y} + \frac{\partial v_1}{\partial u_2}  \frac{\partial u_2}{\partial y}   \nonumber    \\
   & = \frac{\partial v }{\partial y},
\end{align}
\begin{align}\label{eq:partialupartialy}
  \frac{\partial u}{\partial y} & = \frac{\partial u_1}{\partial u_2}  \frac{\partial u_2}{\partial y} + \frac{\partial u_1}{\partial v_2}  \frac{\partial v_2}{\partial y}  \nonumber \\
   & = \frac{\partial v_1}{\partial v_2}  \frac{\partial v_2}{\partial x} + \frac{\partial v_1}{\partial u_2}  \frac{\partial u_2}{\partial x}   \nonumber    \\
   & = \frac{\partial v }{\partial x}.
\end{align}
In addition, all first order partial derivatives of $u $ and $v$ are continuous. Hence the Cauchy-Reimann type equations are continuous and the first order partial derivative of $u$ and $v$ are continuous.

\subsection{Line integrals of $LC_2 $  functions }
\begin{definition}
Let $C$ be a smooth  curve in  the $LC_2$ plane that may be parameterized as
 $$ C= (x(t),y(t)),\, t_0 \leq t \leq t_1 .$$
Then we have

\begin{align*}
  &\int_{C} f(L)dL = \int_{t_0}^{t_1}[u(x(t),y(t))I+v(x(t),y(t))E][x'(t)I+y'(t)E]dt \\
   & = I \int_{t_0}^{t_1}[u(x(t),y(t))x'(t)+ v(x(t),y(t))y'(t)]dt + E \int_{t_0}^{t_1} [v(x(t),y(t))x'(t)+ u(x(t),y(t))y'(t)]dt.
\end{align*}
\end{definition}

\begin{theorem}
  Suppose that $f(L)=f(xI+yE)=u(x,y)I+v(x,y)E$ is an operator valued function such that $u,v \in C(\Omega)$  for some domain $ \Omega \subset \mathbb{R} ^2 $. Let $C$ be a piecewise smooth, simply connected, closed curve lying inside $C$. Then
  $$ \oint  _{C}f(L)dL =0 $$
\end{theorem}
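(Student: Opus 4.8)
The plan is to reduce the operator line integral to two ordinary real line integrals in the plane and then apply Green's theorem to each. From the definition of $\int_C f(L)\,dL$ given above, writing $C$ as $(x(t),y(t))$, $t_0\le t\le t_1$, we have
\[
\oint_C f(L)\,dL \;=\; I\oint_C\bigl(u\,dx+v\,dy\bigr) \;+\; E\oint_C\bigl(v\,dx+u\,dy\bigr),
\]
so it suffices to show that the two scalar circulations $\oint_C(u\,dx+v\,dy)$ and $\oint_C(v\,dx+u\,dy)$ both vanish. One caveat must be flagged at the outset: with only $u,v\in C(\Omega)$ the conclusion fails (for instance $f(L)=\bar L$ is continuous but has nonzero circulation), so the statement should be read with the standing hypothesis that $f$ is holomorphic in the sense of $LC_2$ on $\Omega$ — equivalently, that $u,v$ have continuous first-order partial derivatives on $\Omega$ satisfying the Cauchy--Riemann type equations (\ref{eq:CRtypr}). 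This is the regularity the argument actually uses.

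First I would invoke the Jordan curve theorem: since $C$ is a simple, piecewise smooth, simply connected closed curve lying in $\Omega$, it bounds a region $D$ with $\overline{D}\subset\Omega$, to which Green's theorem applies. Splitting $C$ into its finitely many smooth arcs handles the piecewise-smooth case in the usual way. Green's theorem then yields
\[
\oint_C(u\,dx+v\,dy)=\iint_D\bigl(v_x-u_y\bigr)\,dA,\qquad
\oint_C(v\,dx+u\,dy)=\iint_D\bigl(u_x-v_y\bigr)\,dA.
\]

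The final step uses the Cauchy--Riemann type equations (\ref{eq:CRtypr}), namely $u_x=v_y$ and $u_y=v_x$, which hold throughout $D$ since $f$ is analytic there in the $LC_2$ sense. These make the integrands $v_x-u_y$ and $u_x-v_y$ identically zero on $D$, so both double integrals vanish and hence $\oint_C f(L)\,dL=\textbf{0}$. The argument is the verbatim analogue of the proof of the Cauchy--Goursat theorem via Green's theorem for complex functions, the only difference being that the sign in $E^2=I$ replaces the minus signs of the complex case, producing the integrand pair $v_x-u_y$ and $u_x-v_y$ in place of $-(v_x+u_y)$ and $u_x-v_y$. I expect the only real obstacle to be hypothesis bookkeeping: one must ensure the first partials are continuous up to $\overline{D}$ (not merely on $\Omega$) so that Green's theorem is legitimately applicable, and one must use that $C$ is a genuine Jordan curve so that its interior $D$ is well defined; once these are in place the computation is routine.
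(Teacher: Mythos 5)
Your proof follows essentially the same route as the paper's: decompose $\oint_C f(L)\,dL$ into its $I$ and $E$ components, apply Green's theorem to each scalar circulation, and annihilate both integrands $v_x-u_y$ and $u_x-v_y$ via the Cauchy--Riemann type equations (\ref{eq:CRtypr}). Your added observation is also correct and worth keeping: the hypothesis $u,v\in C(\Omega)$ as stated is insufficient (the paper's own proof silently assumes $f$ is analytic in the $LC_2$ sense, i.e.\ continuous first partials satisfying $u_x=v_y$, $u_y=v_x$), so the theorem should be read with that standing assumption, together with the bookkeeping about the enclosed region that Green's theorem requires.
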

\begin{proof}
By using the Cauchy- Riemann conditions satisfied by the analytic function $f$ and Green's theorem, we have
  \begin{align*}
     \oint_{C}f(L)dL  & = \oint_{C}(uI+vE)(dxI+dyE) \\
     & =  I\oint_{C}(udx+vdy)+ E \oint_{C}(udy+vdx) \\
     & = I \iint_{R}(v_x-u_y)dA+ E \iint_{R}(u_x-v_y)dA=\textbf{0}.
  \end{align*}
\end{proof}

\begin{example}
  Let $C$ be a closed curve  which forms the boundary of a triangular region with vertices $(0,0), (1,0), (0,1) $. Show that
  $$ \oint_{C} L dL=0 .$$
  We have $$ \oint_{C} L dL= I\int_{0}^{1}x dx + \int_{1}^{0} (x I+ (1-x) E)dx+ E \int_{1}^{0}ydy = \textbf{0}. $$
\end{example}

\begin{example}
Let $C_R $ a circle of radius $R$, centered at $(0,0)$. Then
  \begin{equation*}
    \oint_{C_R} \bar{L}dL= 2\pi R^2 E.
  \end{equation*}
  $$ \oint_{C_R} \bar{L}dL= \oint_{C_R} (xI-yE)(dxI+dyE)=I \oint_{C_R} xdx-ydy + E \oint_{C_R}xdy-ydx .$$
  The required result can obtained by applying Green's theorem to the two closed line integrals.
\end{example}

\subsection{A version of Cauchy's integral formula for $ \mathbb{C} $  does not work for $LC_2 $ }
In this subsection, we show that a version of the well known Cauchy integral formula for complex functions  is not applicable for the case of an operator valued function defined on subset of $LC_2 $.
   \begin{theorem}[Cauchy's integral formula]
Suppose that $f$  is analytic in a simply-connected domain $ \Omega $ and $C $ is any simple closed contour lying entirely within $ \Omega $. Then for any point $ z_0 $ within $C$,
 $$  f(z_0)= \frac{1}{2 \pi i} \oint_{C} \frac{f(z)dz}{z-z_0}. $$
   \end{theorem}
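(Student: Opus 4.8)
The plan is to use the classical removable-singularity trick together with Cauchy's theorem for $\mathbb{C}$ (the complex-analytic prototype of the vanishing line-integral result established above for $LC_2$ via Green's theorem). First I would fix $z_0$ in the interior of $C$ and introduce the function $g(z) = \frac{f(z)-f(z_0)}{z-z_0}$ for $z \neq z_0$, setting $g(z_0) := f'(z_0)$. Because $f$ is differentiable at $z_0$, the function $g$ is continuous on all of $\Omega$ and holomorphic on $\Omega \setminus \{z_0\}$, so by the Cauchy--Goursat theorem in the form valid for a function continuous on a simply connected domain and holomorphic away from a single point, $\oint_C g(z)\,dz = 0$. Rearranging this gives
$$ \oint_C \frac{f(z)}{z-z_0}\,dz = f(z_0)\,\oint_C \frac{dz}{z-z_0}. $$

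It then remains only to compute $\oint_C \frac{dz}{z-z_0}$. Picking $\rho > 0$ small enough that the circle $C_\rho = \{z : |z-z_0| = \rho\}$ lies inside $C$, the integrand $\frac{1}{z-z_0}$ is holomorphic on the annular region between $C$ and $C_\rho$, so the deformation-of-contours principle gives $\oint_C \frac{dz}{z-z_0} = \oint_{C_\rho}\frac{dz}{z-z_0}$; parametrizing $C_\rho$ by $z = z_0 + \rho e^{i\theta}$, $0 \le \theta \le 2\pi$, we get $\oint_{C_\rho}\frac{dz}{z-z_0} = \int_0^{2\pi} i\,d\theta = 2\pi i$. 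Substituting back yields $\oint_C \frac{f(z)}{z-z_0}\,dz = 2\pi i\, f(z_0)$, which is the assertion after dividing by $2\pi i$. (Alternatively, one can bypass $g$ and estimate directly: $\left|\oint_{C_\rho}\frac{f(z)-f(z_0)}{z-z_0}\,dz\right| \le 2\pi\,\max_{|z-z_0|=\rho}|f(z)-f(z_0)| \to 0$ as $\rho \to 0^+$ by continuity of $f$ at $z_0$, while $f(z_0)\oint_{C_\rho}\frac{dz}{z-z_0} = 2\pi i\,f(z_0)$ is independent of $\rho$.)

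The step I expect to be the main obstacle is the contour manipulation: both the vanishing $\oint_C g\,dz = 0$ for a function only known to be continuous (not yet holomorphic) at $z_0$, and the deformation $\oint_C = \oint_{C_\rho}$, rest on Cauchy's theorem for a non-simply-connected region. The standard remedy is to join $C$ and $C_\rho$ by crosscuts, splitting the annular region into finitely many simply connected pieces and applying the closed-curve integral theorem (the $\mathbb{C}$-counterpart of the earlier $LC_2$ computation) to each. Everything else is one elementary parametrized integral and an $ML$-estimate. Since this is a purely classical complex-analytic statement, it would be equally acceptable to simply cite a standard text as is done elsewhere in the paper; its role here is to record the precise identity that will then be shown to fail for operator-valued functions on $LC_2$.
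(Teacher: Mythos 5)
Your argument is the standard textbook proof of the Cauchy integral formula and it is correct in outline: the auxiliary function $g(z)=\frac{f(z)-f(z_0)}{z-z_0}$ (extended by $g(z_0)=f'(z_0)$) together with the extended Cauchy--Goursat theorem, or alternatively the deformation to a small circle $C_\rho$ plus the $ML$-estimate, is exactly how this identity is established in the references the paper leans on (e.g.\ Zill--Shanahan, Lang, Ponnusamy--Silverman). Note, however, that the paper itself offers no proof of this statement: it is quoted as a classical fact solely to set up the contrast with Theorem \ref{eq:CIF}, where the analogous integral $\oint_C \frac{dL}{L-L_0}$ over $LC_2$ is shown to vanish identically (via Green's theorem) rather than to reproduce $f(L_0)$. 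So there is no authorial argument to compare yours against; your closing remark that a citation would suffice matches the paper's actual practice, and if you do include the proof, the only step deserving care is the one you already flagged — justifying $\oint_C g\,dz=0$ when $g$ is merely continuous at $z_0$, which needs either the strengthened Cauchy--Goursat theorem or the crosscut/annulus argument you describe; the self-contained $ML$-estimate variant avoids that subtlety entirely and is the cleaner choice.
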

However, a similar version will not apply in the case of $ LC_2 $.
\begin{theorem}\label{eq:CIF}
Let $C$ be a smooth closed curve such that for each  point $L$ on $C$, $L-L_0 \in LC_2 ^*$, where $L_0 = x_0 I +y_0E $, and $ L = xI+yE $ is identified by a point $(x,y)$ on $C$. Then
  $$ \oint_C \frac{dL}{L-L_0} = \textbf{0}, $$
 \end{theorem}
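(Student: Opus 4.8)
The plan is to compute the integral directly by parametrizing the curve $C$ and exploiting the fact that $\frac{1}{L-L_0}$ can be written explicitly in components via (\ref{eq:Linverse}). Write $L = xI+yE$ with $(x,y)$ tracing $C$, set $u = x-x_0$ and $w = y-y_0$, so that $L-L_0 = uI+wE$ and, since $L-L_0 \in LC_2^*$ along $C$, we have $u^2-w^2 \neq 0$ and
\[
\frac{1}{L-L_0} \;=\; \frac{u}{u^2-w^2}\,I \;-\; \frac{w}{u^2-w^2}\,E .
\]
Then $dL = dx\,I + dy\,E = du\,I + dw\,E$, and multiplying out using (\ref{eq:operatormultiplication}),
\[
\frac{dL}{L-L_0} \;=\; \frac{u\,du - w\,dw}{u^2-w^2}\,I \;+\; \frac{u\,dw - w\,du}{u^2-w^2}\,E .
\]
So the claim reduces to showing that the two real line integrals $\oint_C \frac{u\,du-w\,dw}{u^2-w^2}$ and $\oint_C \frac{u\,dw-w\,du}{u^2-w^2}$ both vanish.

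The key observation is that \emph{both} of these one-forms are exact on the open set $\{u^2-w^2\neq 0\}$ — unlike the complex case, where $\frac{dz}{z-z_0}$ is closed but not exact on a punctured neighbourhood. Indeed, the first form is $\frac{1}{2}\,d\bigl(\ln|u^2-w^2|\bigr)$, and the second is $d\bigl(\operatorname{arctanh}(w/u)\bigr)$ on the region $|w|<|u|$ (with the analogous primitive $\operatorname{arctanh}(u/w)$ where $|u|<|w|$); compare (\ref{eq:theta}) and the logarithm formula (\ref{eq:logartiminhyperbolic}). Since the hypothesis $L-L_0\in LC_2^*$ for all $L$ on $C$ means the closed curve never meets the degenerate locus $\mathcal{N}$ (the lines $|u|=|w|$), the curve $C$ stays inside a single connected component of $\{u^2-w^2\neq 0\}$, on which each integrand has a single-valued primitive. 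A closed-curve integral of an exact form is zero, so both real integrals vanish and the result follows.

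The main subtlety — and the point worth spelling out carefully — is the connectedness claim: the set $\{u^2-w^2\neq 0\}$ has four components (the two "horizontal" wedges $|u|>|w|$ split by sign of $u$, and the two "vertical" wedges $|w|>|u|$), and one must check that a continuous closed curve avoiding $\mathcal{N}$ cannot jump between them, so that a genuine global primitive exists on a neighbourhood of $C$'s image. This is where the contrast with Cauchy's formula lives: in $\mathbb{C}$ the complement of a point is connected and carries the nontrivial period $2\pi i$, whereas here the obstruction set $\mathcal{N}$ is a pair of lines that \emph{disconnects} the plane, killing all periods. I would also remark that if $C$ were allowed to cross $\mathcal{N}$ the integrand is not even defined there, so no "principal value" version rescues a nonzero answer; this is exactly why the $LC_2$ analogue of Cauchy's integral formula fails, consistent with the section heading. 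An alternative, slicker route is to invoke the preceding theorem: $f(L)=\frac{1}{L-L_0}$ has identity and shift components $u/(u^2-w^2)$ and $-w/(u^2-w^2)$, which one checks satisfy the Cauchy–Riemann type equations (\ref{eq:CRtypr}) and are $C^1$ on $LC_2^*$; then the Green's-theorem computation in the proof of the Cauchy-type theorem above gives $\oint_C f(L)\,dL = \mathbf{0}$ directly, provided $C$ bounds a region lying in $LC_2^*$ — the exact-form argument is what handles the general case where the enclosed region might otherwise meet $\mathcal{N}$.
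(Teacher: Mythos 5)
Your proposal is correct, and it finishes by a genuinely different argument than the paper, after an identical start. Both you and the paper write $\frac{1}{L-L_0}$ explicitly in components (equivalently $(\bar L-\bar L_0)/((x-x_0)^2-(y-y_0)^2)$), multiply by $dL$, and reduce the claim to the vanishing of the two real line integrals $\oint_C \frac{u\,du-w\,dw}{u^2-w^2}$ and $\oint_C \frac{u\,dw-w\,du}{u^2-w^2}$ with $u=x-x_0$, $w=y-y_0$. The paper then concludes by applying Green's theorem to each of these closed line integrals (the integrands are closed forms, so the resulting double integrals vanish), while you exhibit explicit single-valued primitives, $\tfrac12\ln\lvert u^2-w^2\rvert$ and $\atanh(w/u)$ (or $\atanh(u/w)$), on each wedge of $\{u^2-w^2\neq 0\}$ and use the connectedness observation that a closed curve avoiding the two lines stays in one wedge, so the integral of an exact form over it is zero. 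Your route buys a bit more robustness: it needs no assumption that $C$ is simple and no discussion of the enclosed region, whereas the Green's theorem route tacitly requires $C$ to be simple and to bound a region contained in $LC_2^*$ — a point the paper does not verify, although it is in fact automatic for simple curves since each wedge is convex and hence contains the interior of any Jordan curve lying in it, which slightly undercuts your worry that the enclosed region ``might otherwise meet $\mathcal{N}$.'' The paper's route is shorter and parallels its Cauchy--Goursat-style theorem proved just above; your alternative closing remark, that one could instead verify the Cauchy--Riemann type equations (\ref{eq:CRtypr}) for $1/(L-L_0)$ and invoke that theorem, matches the paper's subsequent more general statement.
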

\begin{proof}
  \begin{align*}
    \oint_C \frac{dL}{L-L_0} & =   \oint_C \frac{(\bar{L}-\bar{L_0})dL}{(x-x_0)^2-(y-y_0)^2}\\
     & =   \oint_C \frac{(x-x_0)I-(y-y_0)E (dxI+dy E}{(x-x_0)^2-(y-y_0)^2}\\
     & = I \oint_C \frac{(x-x_0)dx-(y-y_0)dy }{(x-x_0)^2-(y-y_0)^2} + E \oint_C \frac{(x-x_0)dy-(y-y_0)dx }{(x-x_0)^2-(y-y_0)^2}= \textbf{0}
  \end{align*}
 Then, by applying Green's theorem to each of the two closed line integrals, we can observe that the two closed line integrals yield zero.
\end{proof}
 More generally, we have the following result.
\begin{theorem}
Let $f$ is holomorphic function in the sense of $LC_2 $ in some open region enclosing the closed curve $C$,
$$ \oint_C \frac{f(L)dL}{L-L_0}=\begin{cases}
                                 0, & \mbox{if } L-L_0 \in LC_2^*, \forall L \in C,  \\
                                  & \mbox{undefined otherwise}.
                               \end{cases}  $$
\end{theorem}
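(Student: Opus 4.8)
The plan is to handle the two branches of the piecewise formula separately; the first is immediate and the second carries all the content. If there is a point $L$ on $C$ with $L-L_0\notin LC_2^{*}$, then by Theorem~\ref{eq:invertiblitytheorem} the operator $L-L_0$ is not invertible in $LC_2$, so $f(L)(L-L_0)^{-1}$ is not even defined at that point of the contour; hence the integral is undefined, which is exactly the second case of the statement. All the remaining work concerns the case in which $L-L_0\in LC_2^{*}$ for every $L$ on $C$.

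In that case I would proceed exactly as in the proof of Theorem~\ref{eq:CIF}, only carrying the extra factor $f(L)$ along. Writing $f(L)=u(x,y)I+v(x,y)E$, $\xi=x-x_0$, $\eta=y-y_0$, and using the explicit inverse formula (\ref{eq:Linverse}) (applied with $L$ replaced by $L-L_0$) together with the multiplication rule (\ref{eq:operatormultiplication}),
\[
\frac{f(L)}{L-L_0}=\frac{(uI+vE)(\xi I-\eta E)}{\xi^{2}-\eta^{2}}=\frac{(u\xi-v\eta)I+(v\xi-u\eta)E}{\xi^{2}-\eta^{2}}=:U(x,y)I+V(x,y)E .
\]
Multiplying by $dL=dx\,I+dy\,E$ and separating the identity and shift components turns the contour integral into two real line integrals,
\[
\oint_C\frac{f(L)\,dL}{L-L_0}=I\oint_C\bigl(U\,dx+V\,dy\bigr)+E\oint_C\bigl(V\,dx+U\,dy\bigr).
\]

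The heart of the argument is to show that each of these two real line integrals vanishes. Here I would invoke that $g(L):=f(L)(L-L_0)^{-1}$ is holomorphic in the $LC_2$ sense wherever it is defined: the reciprocal $(L-L_0)^{-1}$ satisfies the Cauchy--Riemann type equations (\ref{eq:CRtypr}) on $LC_2^{*}$, as one reads off directly from the explicit formula (\ref{eq:Linverse}), and the product of two functions whose identity and shift components have continuous first-order partials satisfying (\ref{eq:CRtypr}) again has this property (the product theorem proved above). Hence the components $U,V$ of $g$ satisfy $U_x=V_y$ and $U_y=V_x$, and Green's theorem gives
\[
\oint_C(U\,dx+V\,dy)=\iint_R(V_x-U_y)\,dA=0,\qquad \oint_C(V\,dx+U\,dy)=\iint_R(U_x-V_y)\,dA=0 ,
\]
so the whole integral equals the zero operator $\textbf{0}$. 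Equivalently, one may simply quote the $LC_2$ analogue of the Cauchy--Goursat theorem established earlier, applied to the holomorphic function $g$.

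The delicate point — the one I would flag rather than sweep away — is the applicability of Green's theorem: the set where $(L-L_0)^{-1}$ fails to exist is the pair of null lines $|y-y_0|=|x-x_0|$ through $L_0$, and these will in general cut through the region $R$ bounded by $C$ even when $C$ itself avoids them. Exactly as in the special case $f=I$ of Theorem~\ref{eq:CIF}, the curls $V_x-U_y$ and $U_x-V_y$ vanish identically off those lines (this is precisely the content of the Cauchy--Riemann type equations for $g$), so the formal application of Green's theorem returns zero; to make this fully rigorous one either restricts attention to contours $C$ whose enclosed region avoids the null lines through $L_0$, or excises thin neighbourhoods of those lines from $R$ and checks that the extra boundary contributions cancel in the limit. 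I would carry out the computation at the level of detail of the earlier proofs and state this caveat explicitly.
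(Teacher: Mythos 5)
Your argument is correct and is essentially the route the paper intends: it is the Green's-theorem computation of Theorem \ref{eq:CIF} repeated for $g(L)=f(L)(L-L_0)^{-1}$, whose identity and shift components satisfy the Cauchy--Riemann type equations (\ref{eq:CRtypr}) because of the explicit inverse formula (\ref{eq:Linverse}) and the product theorem, with the non-invertibility of $L-L_0$ at some point of $C$ accounting for the undefined branch. The caveat you flag is in fact moot for a connected simple closed contour: a curve avoiding both null lines $|y-y_0|=|x-x_0|$ lies in one of the four open convex sectors they determine, so the region it encloses also avoids those lines (and in particular cannot contain $L_0$), and Green's theorem applies directly without any excision.
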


\begin{remark}
  If the point $(x_0,y_0)$ lies in the interior of the closed curve $C$ or if the pair of perpendicular lines  $ x-y= x_0-y_0 $ and $ x+y= x_0+y_0$  have a common point with $C$, then the closed integral $ \oint_C \frac{dL}{L-L_0} = \textbf{0}, $ diverges. Therefore, the version of the Cauchy integral formula for complex functions is not applicable for $LC_2 $.
\end{remark}

\begin{table}[h!]
    \begin{center}
  \caption{A table of comparison between $\mathbb{C}$ and  $LC_2$.}
        \begin{tabular}{|c|c|c|}
        \hline
        Field or ring & $ \mathbb{C }$ & $LC_2$  \\
            \hline
            Forms of elements & $z=x+iy$  &  $$ L= xI+yE \\
            \hline
            Identity element   & $1$ & $I$ \\
            \hline
            The imaginary unit and the unit shift & $i$  & $E$ \\
            \hline
             Rlations on $i$ and $1$, $I$ and $E$ & $i^2= -1$  &  $E^2=I$\\
            \hline
             Connection with some periodic space  & $\mathbb{AP}_2$  &  $ \mathbb{P}_2$ \\
            \hline
             Function form &  $f(z)=u(x,y)+iv(x,y)$ &  $f(L)=u(x,y) I +v(x,y) E $  \\
            \hline
            Cauchy- Reimann conditions & $u_x=v_y,\, u_y=-v_x $  &   $ u_x=v_y,\, u_y = v_x $\\
            \hline
             Polar form & $z= \rho e^{i\theta}  $  &  $   \breve{\rho} e^{E \theta} $  \\
            \hline
             Modulus and hyperbolic modulus & $ \rho = \sqrt{x^2+y^2}$  & $\breve{\rho} = \sqrt{x^2-y^2}  $    \\
            \hline
             Argument and hyperbolic argument & $\theta= \arctan (y/x)$   &  $ \theta= \atanh (y/x)$ \\
             \hline
         Euler identity and Euler type identity & $ e^{i\theta}= \cos \theta + i \sin \theta $   &  $  e^{E\theta}= \cosh \theta I + E \sinh \theta    $ \\
              \hline
             Necessary condition for Analycity  & $ \frac{\partial f}{\partial \bar{z}}= 0 $  & $ \frac{\partial f}{\partial \bar{L}}= 0 $     \\
            \hline
             The $u$ and $ v$  of analytic $f$ satisfy & $u_{xx}+ u_{yy}=0,\,v_{xx}+ y_{yy}=0  $  &  $ u_{xx}- u_{yy}=0,\,v_{xx}- y_{yy}=0  $  \\
            \hline
             level curves of coordinates  & $\rho= c,\, c>0 $ are circles  &  $ \breve{\rho}= c,\, c>0 $ are hyperbolas  \\
            \hline
             Transformational effect of  $i$ and $E$  & $iz$  is a rotation of $\pi/2$ &  $EL$ is relflection along $y=x $\\
            \hline
             Field or ring properties & $\mathbb{C}$  is a field.  & $LC_2 $ is a ring that is not a field.\\
            \hline
             Isomorphism with spaces of matrices & $ \mathbb{C} \cong \mathbb{AS}_2 $ & $ LC_2  \cong CM_2 $ \\
            \hline

             \end{tabular}
    \end{center}
\end{table}
\newpage
\section{Discussion of the results}
In this paper, we have established a comparative analysis of the space $LC_2$ all operators of the form $ xI+yE $ that are linear combinations of the identity operator $I$ and  a shift operator $E$ acting on the space of all 2-periodic functions $\mathbb{P}_2$. We can identify  an operator $xI+yE$ by a pair of real numbers $(x,y) \in \mathbb{R}^2 $. So, $LC_2 $ can be considered a two-dimensional space generated by $I$ and $E$ . We have
$$ E^{2n-2}=I,\quad  E^{2n-1} = E,\, n \in \mathbb{N}, $$
so that all higher powers of the shift operator $E$ as well, fall into the space $LC_2$. We have shown some analogous structures between $LC_2$ and the complex field $\mathbb{C} $. We have also shown their remarkable differences. We have summarized the comparative analysis in tabulated form. We have introduced the analyticity of $LC_2$ functions in the sense $LC_2$. This gives rise to the Cauchy-Reimann-type equations that are well known in complex analysis. As the real and imaginary parts of a complex analytic function are harmonic, the shift and the identity parts of an analytic $LC_2$ function satisfy the wave equation. In a similar manner to that of $LC_2 $, we have defined the complex field  $\mathbb{C}$  as the set operators of the form $ xI+yE $ of a linear combination of the identity operator $I$ and a shift operator $E$ acting on the space of 2-antiperiodic functions $\mathbb{AP}_2$.
$$ E^{2n-2}= (-1)^{n-1}I,\quad  E^{2n-1} = (-1)^{n-1} E, \, n \in \mathbb{N}, $$
so that all higher powers of the shift operators as well, fall into the space $ \mathbb{C} $.

In the theory of second order partial differential equations, we have elliptic, parabolic, and hyperbolic partial differential equations. The elliptic partial differential equations have no characteristic curves, the parabolic partial differential equations have one family of characteristic curves, and the hyperbolic partial differential equations have two families of characteristic  curves. See, for example, \cite{HW}. The characteristic families of  curves can be contrasted with the set of points of singularities of the spaces that we have studied in this paper. The field of complex numbers $\mathbb{C}$ has only one  singular point, namely $z=0$, has no singularity curve. It may be contrasted with the elliptic partial differential equations. In addition to that, the real and imaginary parts of a holomorphic functions are harmonic. That is, they satisfy  Laplace's equation, which is an elliptic partial differential equation. The space $LC_2$ has singularises along the pair of intersecting lines $x^2-y^2=0 $ and can be contrasted with the  hyperbolic partial differential equations that posses two families characteristic curves. In addition to that, the identity component and the shift component of  $LC_2$ functions are holomorphic in the sense of $LC_2$  satisfy wave equations, which are hyperbolic partial differential equations. Left out of this comparison are the parabolic partial differential equations, which have one characteristic family of curves. Let us define a set of operators
$$  \mathbb{PA} := \{xI+ yE : E^2=0 \},$$
on the space $\{f :\mathbb{R}\rightarrow \mathbb{R} \}$ of all real valued functions, not necessarily periodic. In a similar manner we may define the set of matrices of the form
\begin{equation}\label{eq:characterstic2}
    \mathbb{A}_2:= \left\{ \begin{pmatrix}
   x & y   \\
   0 & x
 \end{pmatrix},\, x,y \in \mathbb{R} \right\} .
   \end{equation}
Then we have
$$ \mathbb{PA} \cong  \mathbb{A}_2. $$
The set of points of singularities of $\mathbb{PA}$ are the same as the set of all points in $\mathbb{R}^2$ with $x=0$. Therefore, the curve of points of singularity is the $y$-axis. The eigenvalues, determinants, etc. of the operators in $\mathbb{PA}$ may be calculated in a similar way as those of $LC_2 $.

\section*{Conclusions and some comments }
We have defined the space $LC_2$ as the set of operators operating on the space $\mathbb{P}_2 $ of all periodic functions of period 2. The space  $LC_2$  is isomorphic to the ring of  all $2 \times 2 $ real circulant matrices. In the same way, we have introduced a new characterization of the field of complex numbers as the set of operators  operating on the space all antiperiodic functions of antiperiod $2$.  The author believes that  a similar  comparative analysis can be done between the space of operators operating on the space of periodic functions with higher integer periods, and some ring or field structures. The ring of real quaternions, $\mathbb{QT}$, is
\begin{equation}\label{eq:realquaternions}
   \mathbb{QT} := \{ x + y \textbf{i} + z \textbf{j}+w \textbf{k}, x,y,z,w \in \mathbb{R},\quad  \textbf{i}^2 = \textbf{j}^2= \textbf{j}^2= \textbf{i}\textbf{j}\textbf{k}= -1 \}.
\end{equation}
 See for example, \cite{JJR}, \cite{AB}, may be contrasted with


 \begin{equation}\label{eq:ALCfour}
    ALC _4 := \{aI+bE+cE^2+d E^3,  a, b, c, d \in \mathbb{R}, E^4=-I \}
 \end{equation}
 or the space of all 4-antiperiodic functions  $\mathbb{AP}_4$. It is very important to note the differences in the properties that  the sets of some  classes operators display based on the space of functions that they are acting on. For example, it suffices to note the cases of $LC_2$ and $\mathbb{C} $.  It may be useful to consider various coordinate systems to study the spaces and singular points in the spaces as well as the degeneracy of the set of constant coordinate values, etc. Geometric descriptions and some physical applications may be incorporated.

\section*{Conflict of interests}
The author declares that there is no conflict of interest regarding the publication of this paper.

\section*{Funding}
This research work is not funded by any organization or individual.

\section*{Acknowledgment}
The author is thankful to the anonymous reviewers for their constructive and valuable suggestions.

\section*{Data availability}
There is no external data used in this research work apart from the reference materials cited in this paper.


\begin{thebibliography}{99}
\bibitem{AB} Amy Buchman, A Brief History of Quaternions and the Theory of Holomorphic Functions of Quaternionic Variables \url{https://arxiv.org/pdf/1111.6088.pdf}.

\bibitem{CHR} C. H. Richardson, An Introduction to the calculus of Finite Differences, D. Van Nostrand Company, Inc. 1954.
\bibitem{KW}  Dan Kalman and James E. White, Polynomial equations and circulant matrices, The American Mathematical Monthly, Vol. 108, No. 9 (Nov., 2001), pp. 821-840.
\bibitem{DV} Danilo P. Mandic, Vanessa Su Lee Goh,  Complex Valued Nonlinear Adaptive Models, John Wiley \&  Sons Ltd. 2009.
\bibitem{DP} Dennis G. Zill, Patric D. Shanahan, A first course in Complex Analysis with applications, Jones and Bartlett Publishers, Inc., 2003.

\bibitem{GS} Geori E. Shilov, Elementary Real and Complex analysis, Dover Publication, Inc., New York, 1973.

 \bibitem{GN} Gis\`{e}le Mophou,  Gaston M. N Gu\'{e}r\'{e}nkata, Existence of Antiperiodic Solutions to Semilinear Evolution Equations in Intermidiate Banach Spaces, Springer proceeding in Mathematics and statistics vol. 37.

 \bibitem{Et} H. D. Ebbenghause, H. Hermes, F. Hirzebruch, M. Koecher, K. Mainzer, J. Neukirch, A. Prestel, R. Remmert, Numbers,Graduate Texts in Mathematics, Springer-Verlag, New York Inc. 1991.

\bibitem{HBY} Hailu Bikila, Yadeta, Decomposition of Spaces of Periodic function into spaces of Period and Spaces of Antiperiodic functions,\url{https://arxiv.org/abs/2210.00915}.

\bibitem{HBY1} Hailu Bikila, Yadeta, Direct sum decomposition of spaces of periodic functions and some connections between shift operators, periodicity of solutions of difference equations, circulant matrices, cyclotomic polynomials, and roots of unity,  \url{https://arxiv.org/abs/2304.02517}.

\bibitem{HW} Hans F. Weinberger, A First Course in Partial Differential Equations with Complex Variables and Transform Methods, Dover Publications, Inc., New York, 1965.

\bibitem{JM} Jayanta Mukhopadhayay, Image and video processing in the Compressed Domain, CRC Press 2011.

\bibitem{JJR} Joseph J. Rotman, A first course in abstract algebra, third edition, Pearson Prentice Hall, 2006.

\bibitem{KM} Kenneth S. Miller,  An introduction to Calculus of Finite Differences,  Henry Holt and Company New York, 1960.

\bibitem{MT} L. M. Milne-Thomson, The Calculus of Finite Differences, Macmilan and Co. Ltd. 1933.

\bibitem{LB} Louis Brand, Differential and Difference Equations, John Weley \& Sons Inc. 1966.

\bibitem{JR} John G. Ratcliffe, Foundations of hyperbolic manifolds, Third ed. graduate texts in Mathematics, Springer, 2019.

\bibitem{NJ}  Norman B. Haaser, Joseph A. Sullivan Real Analysis, Dover Publications Inc, New York, 1991.

\bibitem{RR} Reinhold Remmert, Theory of Complex Functions, Springer Science + Business Media, LLC 1991.

\bibitem{PS} S. Ponnusamy, Herb Silverman, Complex Variable with Applications, Birkh\"{a}user Boston, 2006.

\bibitem{SL} Serge Lang, Complex Analysis 4th ed. Graduate texsts in Mathematics, Springer Science + Business Media, LLC 1999.

\bibitem{VK} Viladmir V. Kisil, Geometry of M\"{o}bius  Transformations, Elliptic, Parabolic and hyperbolic actions of $SL_2(R)$, Imperial College Press, 2012.
\bibitem{WP}  WIKIPEDIA, The Free Encycopedia,  \url{https://en.wikipedia.org/wiki/Split-complex_number}.
\end{thebibliography}
\end{document}